\newtheorem{proposition}{Proposition}[subsection]
\newtheorem{theorem}{Theorem}[subsection]
\newtheorem{corollary}[proposition]{Corollary}
\newtheorem{definition}{Definition}[subsection]
\newtheorem{remark}{Remark}
\newcommand\restr[2]{{
		\left.\kern-\nulldelimiterspace 
		#1 
		\vphantom{\big|}
		\right|_{#2} 
}}
\newtheorem{algorithm}{Algorithm}
\newenvironment{proof}{{\noindent \it Proof.}}{\hfill $\fbox{}$ \vspace*{5mm}}
\newcommand{\bx}{{\bf x}}
\newcommand{\N}{\mathbb{N}}
\newcommand{\R}{\mathbb{R}}
\newcommand{\diag}{{\rm diag}}
\newcommand{\rank}{{\rm rank}}
\newcommand{\BE}{\begin{equation}}
\newcommand{\EE}{\end{equation}}
\newcommand{\Eulerc}{\prescript{\textrm{e}^{\sqrt{\alpha}}}{1}{\mathcal{L}_{\textnormal{dir},\alpha x^2}}}
\newcommand{\Euler}{\prescript{\textrm{e}^{\sqrt{\alpha}}}{1}{\mathcal{L}_{\textnormal{dir},\alpha x^2}^{(n)}}}
\newcommand{\Eulerw}{\prescript{\textrm{e}^{\sqrt{\alpha}}}{1}{\hat{\mathcal{L}}_{\textnormal{dir},\alpha x^2}^{(n)}}}
\newcommand{\numerr}{\prescript{}{\alpha}{\textnormal{\textbf{err}}_{k}^{(n)}}}
\newcommand{\analerr}{\prescript{}{\alpha}{\tilde{\textnormal{\textbf{err}}}_{r,k}^{(n)}}}
\newcommand{\analerrExact}{\prescript{}{\alpha}{\tilde{\textnormal{\textbf{err}}}_{k}^{(n)}}}
\numberwithin{equation}{section}
\date{}
\begin{document}
	\title{Analysis of the spectral symbol function for spectral approximation of a differential operator}
	\author{Davide Bianchi\footnote{University of Insubria - Como (Italy). Email: d.bianchi9@uninsubria.it} \textsuperscript{ ,}\footnote{Partially supported by INdAM-GNCS Gruppo Nazionale per il Calcolo Scientifico}}
	\maketitle
	
	\begin{abstract}
Given a differential operator $\mathcal{L}$ along with its own eigenvalue problem $\mathcal{L}u = \lambda u$ and an associated algebraic equation $\mathcal{L}^{(n)} \mathbf{u}_n = \lambda\mathbf{u}_n$ obtained by means of a discretization scheme (like Finite Differences, Finite Elements, Galerkin Isogeometric Analysis, etc.), the theory of Generalized Locally Toeplitz (GLT) sequences serves the purpose to compute the spectral symbol function $\omega$ associated to the discrete operator $\mathcal{L}^{(n)}$

We prove that the spectral symbol $\omega$ provides a necessary condition for a discretization scheme in order to uniformly approximate the spectrum of the original differential operator $\mathcal{L}$. The condition measures how far the method is from a uniform relative approximation of the spectrum of $\mathcal{L}$.  Moreover, the condition seems to become sufficient if the discretization method is paired with a suitable (non-uniform) grid and an increasing refinement of the order of approximation of the method. 

On the other hand, despite the numerical experiments in many recent literature, we disprove that in general a uniform sampling of the spectral symbol $\omega$ can provide an accurate relative approximation of the spectrum, neither of $\mathcal{L}$ nor of the discrete operator $\mathcal{L}^{(n)}$.    
\end{abstract}

\noindent\textbf{Keywords} Spectral approximation differential operators $\cdot$ Spectral symbol $\cdot$ Eigenvalue problem $\cdot$ Sturm-Liouville $\cdot$ Matrix methods $\cdot$ Generalized locally Toeplitz

\noindent\textbf{Mathematics Subject Classification} 65N25 $\cdot$ 65N06 $\cdot$ 65N30 $\cdot$ 65N35 $\cdot$ 65L15

\section{Introduction}\label{sec:intro}
For simplicity, throughout this paper our main model problem will be the one dimensional Sturm-Liouville equation
 \begin{equation}\label{eq:S-L_general}
\begin{cases}-\partial_x\left(p(x)\partial_xu(x)\right) +q(x)u(x)= f(x) & x\in (a,b),\\
\textnormal{boundary conditions},
\end{cases}
\end{equation}
and its associated (regular) Sturm-Liouville eigenvalues problem (SLP)
\begin{equation}\label{eq:S-L_general_eig_prob}
\begin{cases}-\partial_x\left(p(x)\partial_xu(x)\right) +q(x)u(x)= \lambda w(x)u(x) & x\in (a,b),\\
\textnormal{boundary conditions}.
\end{cases}
\end{equation}

As it will be clear in Subsection \ref{ssec:generalization}, a generalization to other kind of differential operators in higher dimensions can be handled by the same techniques we are going to discuss for this model case.

There are several many approaches to compute the eigenvalues $\lambda_k$ related to Problem \eqref{eq:S-L_general_eig_prob}, such as matrix methods, Pr\"{u}fer's methods, sampling methods, etc. We refer to \cite{P93,SG05,ZD98} and \cite{BC96,A95} for a general basic overview and we cite some recent interesting papers about transmutation operator methods \cite{KT15,KNT17}.

Matrix methods consist in discretizing Problem \eqref{eq:S-L_general_eig_prob} in the form 
\begin{equation}\label{eq:eig_algebraic}
\mathcal{L}^{(n)} \mathbf{u}_n = \lambda^{(n)} \mathbf{u}_n,
\end{equation}
where $\mathcal{L}^{(n)}$ is a matrix of order $n\times n$, with $n$ a mesh fineness parameter, and then solving the associated algebraic eigenvalue problem. They generally suffer of a (fast) growth of the absolute (and relative) error, that is the quality of the numerical approximation they provide for the $k$-th eigenvalue deteriorates significantly as the index $k$ increases. 
Even if there have been developed suitable correction techniques for improving the accuracy of the computed eigenvalues (see as an example for the FD case the first paper appeared in this sense \cite{PHA81}, and a more recent development \cite{AGM09} for the Numerov's method), if the interest relies only on the computation of the eigenvalues/eigenfunctions, then other methods typically can be a better choice, such as shooting-type methods for example, which can provide more uniformly accurate estimates of the eigenvalues with respect to the index $k$, at the cost of a fair more effort in the implementation, see \cite[Chapter 5]{P93}. 

Nevertheless, many other problems demand to discretize equation \eqref{eq:S-L_general} in such a way that the spectrum of the discrete operator $\mathcal{L}^{(n)}$ preserves the spectrum of the continuous operator $\mathcal{L}$
 uniformly with respect to $k=1,\cdots,n$. Within this regard, see for example the spectral gap problem of the $1d$ wave equation for the uniform observability of the control waves, \cite{IZ99} and \cite{EMZ16,BS18}, or structural engineering problems, see \cite[Section 3.1]{HER}, or finally the discretization of the Laplace-Beltrami operator by means of the graph-Laplacian, see \cite{RBV10,BIK,LBB}.

In this setting, the theory of Generalized Locally Toeplitz (GLT) sequences provides the necessary tools to understand whether the matrix methods used to discretize the operator $\mathcal{L}$ are efficient or not to spectrally approximate it. The GLT theory originated from the seminal work of Tilli on Locally Toeplitz (LT) sequences \cite{Ti98} and then afterwards developed by Serra-Capizzano in \cite{Serra03,Serra06}. It was devised to compute and analyze the spectral distribution of matrices arising from the numerical discretization of integral equations and differential equations. One of the purposes of this spectral analysis is the design of efficient numerical methods for computing the related numerical solutions by fast iterative solvers (especially, multigrid and preconditioned Krylov methods), in this sense see for example the related works \cite{DGMS15,DMS16,DFFMS18}.

It often happens that the discrete operator $\mathcal{L}^{(n)}$ from \eqref{eq:eig_algebraic}, properly weighted by a power of $n$ (depending on the dimension of underlying space and on the maximum order of derivatives involved) and which we will denote as $\hat{\mathcal{L}}^{(n)}$, enjoys an asymptotic spectral distribution as $n\to \infty$, i.e., as the mesh is progressively refined. More precisely, for any test functions $F(t) \in C_c(\mathbb{C})$,
 \begin{equation}\label{eq:spectral_symbol_intro}
 \lim_{n \to \infty} \frac{1}{n} \sum_{k=1}^{n}F\left(\lambda_k\left(\hat{\mathcal{L}}^{(n)}\right)\right) = \frac1{\mu_m(D)}\int_D  F(\omega(\bx))d\mu_m(\bx),
 \end{equation}
where $\lambda_k\left(\hat{\mathcal{L}}^{(n)}\right)$ is the $k$-th eigenvalue of the weighted operator $\hat{\mathcal{L}}^{(n)}$ and $\omega: D\subset \R^m \to \mathbb{C}$ is referred to as the {\em spectral symbol} of the sequence $\left\{\hat{\mathcal{L}}^{(n)} \right\}_{n\in\N}$, see \cite[Definition 2.6]{BS13} in relation with Toeplitz operators. 

The GLT theory serves the purpose to compute the spectral symbol $\omega$ related to matrix methods employed to get the matrix equation \eqref{eq:eig_algebraic}, especially if the numerical method belongs to the family of the so-called {\em local methods}, such as FD methods,
Finite Element (FE) methods and collocation methods  with locally supported basis functions.

In several recent papers it was advanced the suggestion to exploit the sampling of the spectral symbol for approximating the spectrum of the discrete operator $\hat{\mathcal{L}}^{(n)}$, see as a main reference the paper \cite{GSERSH18} with all the references therein, which is a full review up to the state-of-the-art of the symbol-based analysis for the eigenvalues distribution carried on in the framework of the isogeometric Galerkin approximation (IgA). Unfortunately, this does not apply in general, as we are going to prove. 

Nevertheless, the spectral symbol provides to a matrix method a necessary condition for the uniform spectral approximation of the continuous operator $\mathcal{L}$, in the sense of the relative error. The condition is easily computable and it seems to be sufficient if the discretization method is paired with a suitable (non-uniform) grid and an increasing refinement of the order of approximation of the method. 

The paper is organized as follows. 
\begin{itemize}
	\item Section \ref{sec:model_equation} is a summary about the main properties concerning the SLP model equations \eqref{eq:S-L_general} and \eqref{eq:S-L_general_eig_prob}.  
	
	\item In Section \ref{sec:GLT}, the theory of GLT sequences is briefly introduced. In Subsections \ref{ssec:FD} and \ref{ssec:IsoG} we present the spectral symbols concerning the discretization of \eqref{eq:S-L_general_eig_prob2} by means of central FD and IgA schemes of varying order, respectively. In Subsection \ref{ss:rearrangment}  it is introduced the monotone rearrangement of the spectral symbol and it is explained how to compute an approximation of it, whenever it is not feasible to get an exact analytical expression. We highlight the connection with the theory of uniformly distributed sequences, see Theorem \ref{thm:discrete_Weyl_law}.
	
	\item Section \ref{sec:example} is mainly dedicated to numerical experiments: we analyze the SLP in the Euler-Cauchy equation case. Since the eigenvalues for this regular SLP are computable in closed form, it is a suitable toy-model example with variable coefficients. In Subsection \ref{ssec:example_uniform_3_points}, by easy calculations we show that a uniform sampling of the monotone rearrangement of the spectral symbol produces a positive lower bound in the relative error approximation, independent by the mesh finesse parameter $n$, which prevents to obtain an accurate approximation of the eigenvalues of the Euler-Cauchy differential operator. In Subsection \ref{ssec:L1}, furthermore we present a negative result in the case of spectral symbols $\omega$ belonging to the $L^1$ class. Finally, numerical validations are provided to the necessary condition for a uniform spectral approximation.  
	
	In Subsection \ref{ssec:FD_nonuniform} and Subsection \ref{ssec:galerkin} we generalize what observed in the previous subsection to the case of central FD and IgA methods of higher order. By means of a suitable non-uniform grid suggested by the spectral symbol, we observe by numerical experiments that the discretization methods produce a uniform relative approximation of the spectrum of the continuous differential operator, as the mesh fineness parameter $n$ and the order of the methods increase.  
	
	\item In Section \ref{sec:theory} we collect the proof of the results discussed previously. In particular:
	\begin{itemize}
		\item it is proved that in general a uniform sampling of the spectral symbol $\omega$ does not produce an accurate approximation nor of the eigenvalues of $\mathcal{L}$, nor of the eigenvalues of $\hat{\mathcal{L}}^{(n)}$;
		\item  it is provided a necessary condition to a numerical matrix method for the relative uniform approximation of the spectrum of $\mathcal{L}$;
	\end{itemize} 
All the results can be extended to different matrix methods and differential operators, once there are known the spectral symbol $\omega$ of the matrix method and the asymptotic expansion of the eigenvalues of the differential operator.   
\end{itemize}


\subsection{About the notations}
 We will use the notation $A^{(n)},B^{(n)},C^{(n)}, T^{(n)}$ to indicate general $n\times n$ square matrices, and we will use the notation $\mathcal{L}^{(n,\eta)}$ to indicate a $n\times n$ square matrix which is the discretization of a differential operator $\mathcal{L}$ by means of a numerical method of order of approximation $\eta$. In the case that the approximation order $\eta$ is implied by the context, then we will omit it. If the discretized operator $\mathcal{L}^{(n)}$ is weighted by a constant depending by the finesses mesh parameter $n$, then we will denote it with $\hat{\mathcal{L}}^{(n)}$. We will use the subscripts {\em dir} and {\em BCs} to indicate a (discretized) differential operator characterized by Dirichlet or generic BCs, respectively. When it will be necessary to highlight the dependency of the differential operator with respect to the coefficients $p(x),q(x),w(x)$ and the endpoints $a,b$, we will write them as right and left subscripts/supscripts. So, for example, the weighted discretization of a  Sturm-Liouville operator with Dirichlet BCs by means of the IgA method of order $\eta$ will be denoted by 
 $$
 \prescript{b}{a}{\hat{\mathcal{L}}^{(n,\eta)}_{\textnormal{dir},p(x),q(x),w(x)}}.
 $$
 In the special case of the (negative) Laplace operator we will use the symbol $-\Delta$, and all the other previous notations will apply.
 
 For all the constants we will use the letter $c$, making explicit the dependency to other parameters if needed.
 
 Finally, if not stated differently, we will always consider a non-decreasing ordering of the eigenvalues $\lambda_k$ of a given operator.

\section{The model equation}\label{sec:model_equation}
Our model equation will be the following Sturm-Liouville equation with separated boundary conditions (BCs),
 \begin{equation}\label{eq:S-L_general2}
\begin{cases}-\partial_x\left(p(x)\partial_xu(x)\right) +q(x)u(x)= 0 & x\in (a,b)\subset \R,\\
\sigma_1 u(a) - \sigma_2 p(a)\partial_xu(x)_{|x=a} =0 & \sigma_1^2+\sigma_2^2 >0,\\
\zeta_1 u(b) +\zeta_2 p(b)\partial_xu(x)_{|x=b} =0 & \zeta_1^2+\zeta_2^2 >0,
\end{cases}
\end{equation}
and its associated weighted Sturm-Liouville eigenvalue problem (SLP)

\begin{equation}\label{eq:S-L_general_eig_prob2}
\begin{cases}-\partial_x\left(p(x)\partial_xu(x)\right) +q(x)u(x)= \lambda w(x) u(x) & x\in (a,b)\subset \R,\\
\sigma_1 u(a) - \sigma_2 p(a)\partial_xu(x)_{|x=a} =0 & \sigma_1^2+\sigma_2^2 >0,\\
\zeta_1 u(b) +\zeta_2 p(b)\partial_xu(x)_{|x=b} =0 & \zeta_1^2+\zeta_2^2 >0.
\end{cases}
\end{equation}
There is an extensive literature concerning the Sturm-Liouville eigenvalue boundary problem \eqref{eq:S-L_general_eig_prob2}, see as a (not exhaustive) collection of references \cite{E05,S12,Z15}. We will write $\partial_x f$ and $f'$ equivalently to indicate the first derivative of a function $f$ with respect to its argument. 

If not otherwise explicitly stated, we will suppose that $p,p',w,w',q,(pw)',(pw)'' \in C([a,b])$, $p,w>0$, $q\geq 0$ and $\sigma_1^2+\sigma_2^2 >0, \zeta_1^2+\zeta_2^2>0$. Under these conditions, problem \eqref{eq:S-L_general_eig_prob2} is regular and the differential operator 
\begin{align*}
\mathcal{L}(\cdot) := -w(x)^{-1} \left[\partial_x \left(p(x) \partial_x (\cdot) \right) + q(x)(\cdot)\right],
\end{align*}
defined over an appropriate domain $\textnormal{dom}\left(\mathcal{L}\right)\subset L^1([a,b])$, depending on the boundary conditions, is self-adjoint and has an increasing sequence of positive real eigenvalues $0\leq \lambda_1 <\lambda_2 < \cdots<\lambda_n$ of multiplicity one and such that 
\begin{equation}\label{eq:weyl_asymptotic}
\lim_{n \to \infty}\frac{\lambda_n}{n^2} = \pi^2 \left(\int_a^b \sqrt{\frac{w(x)}{p(x)}}dx\right)^{-2}.
\end{equation}
By the Liouville transformation 
\begin{align}\label{eq:liouville_transform}
&y:= \int_a^x \sqrt{\frac{w(t)}{p(t)}} dt,\\
&v(y) = u(x(y)) (w(x(y)) p(x(y)))^{1/4},\nonumber
\end{align} 
the regular SLP \eqref{eq:S-L_general_eig_prob2} converts to the Liouville normal form
\begin{equation}\label{eq:S-L_normal_eig_prob}
\begin{cases}-\Delta v(y) + V(y)v(y) = \lambda v(y)& y\in (0,B)\subset \R,\\
\Sigma_1 v(0) - \Sigma_2 \partial_yv(y)_{|y=0} =0 & \Sigma_1^2+\Sigma_2^2 >0,\\
Z_1 v(B) +Z_2\partial_yv(y)_{|y=B} =0 & Z_1^2+Z_2^2 >0,
\end{cases}
\end{equation}
where $\Delta$ is the Laplace operator and 
\begin{align*}
&B = \int_a^b \sqrt{\frac{w(t)}{p(t)}} dt,\\
&V(y)= \frac{g''(y)}{g(y)} + \frac{q(x(y))}{w(x(y))} \in C([0,B]), \qquad &g(y):= (w(x(y)) p(x(y)))^{1/4},\\
&\Sigma_1=  \frac{\sigma_1}{(w(a)p(a))^{1/4}} + \sigma_2\left( \frac{(wp)'(a)}{4(p(a))^{1/4}(w(a))^{5/4}}\right), &\Sigma_2=(w(a)p(a))^{1/4}\sigma_2,\\
&Z_1=  \frac{\zeta_1}{(w(b)p(b))^{1/4}} - \zeta_2\left( \frac{(wp)'(b)}{4(p(b))^{1/4}(w(b))^{5/4}}\right), &Z_2=(w(b)p(b))^{1/4}\zeta_2.
\end{align*}
Indicating with $v(y,\lambda)$ a solution of the differential equation in \eqref{eq:S-L_normal_eig_prob} and normalizing it by the initial conditions at $y=0$, namely
$$
v(0,\lambda)=\Sigma_2, \qquad v'(0,\lambda)=\Sigma_1,
$$ 
then the corresponding solution $u(x,\lambda)= (r(x)p(x))^{-1/4}v(y(x),\lambda)$ of \eqref{eq:S-L_general_eig_prob} is normalized by
$$
u(a,\lambda)=\sigma_2, \qquad p(a)u'(a,\lambda)=\sigma_1.
$$
Therefore, the eigenvalues of the SLP \eqref{eq:S-L_general_eig_prob2} are the same of the eigenvalues of the SLP \eqref{eq:S-L_normal_eig_prob}, being the zeros of the entire function
$$
\Psi(\lambda) = \zeta_1 u(b,\lambda) + \zeta_2 p(b)u'(b,\lambda) = Z_1v(B,\lambda) + Z_2 v'(B,\lambda).
$$
In particular, the eigenvalues and eigenfunction norms remain invariant under the change from \eqref{eq:S-L_general_eig_prob2} to \eqref{eq:S-L_normal_eig_prob}. 

\section{Generalized Locally Toeplitz sequences and spectral symbol}\label{sec:GLT}
In this section we are going to provide a brief summary about the theory of GLT sequences, starting from the spectral symbol, see Definition \ref{def:ss_def}. For a detailed treatment of the theory of the GLT sequences we invite the reader to look at \cite{GS17,GS18} and all the references therein. In Subsections \ref{ssec:FD} and \ref{ssec:IsoG} we present the spectral symbols concerning the discretization of \eqref{eq:S-L_general_eig_prob2} by means of $(2\eta+1)$-points central FD approximation and Isogeometric Galerkin approximation (IgA) based on B-splines of degree $\eta$ and smoothness $C^{\eta-1}$. We will not discuss directly here the case of spectral symbols concerning Finite Elements method or IgA with more general smoothness assumptions, in order to avoid that this paper becomes unnecessarily long.

In Subsection \ref{ss:rearrangment}, we present a natural way to approximate the monotone rearrangement of a spectral symbol $\omega$. 

\subsection{Preliminaries on GLT sequences and spectral symbol}

A matrix $T^{(n)}$ is said to be Toeplitz if it is a matrix with constant coefficients along its diagonals, i.e., if it is of the form $T^{(n)}_{i,j}= t_{i-j}$ for all $i,j=1,\ldots,n$, and with $\mathbf{t}=\left[t_{-n+1}, \ldots, t_0, \ldots, t_{n-1}\right] \in \mathbb{C}^{2n-1}$. If $t_k$ is the $k$-th Fourier coefficient of a complex integrable function $f$ defined over the interval $[-\pi, \pi]$, then $T^{(n)}= T^{(n)}(f)$ is said to be the Toeplitz matrix generated by $f$. Matrices with a Toeplitz-related structure naturally arise when discretizing, over a uniform grid, problems which have a translation invariance property, such as differential operators with constant coefficients.

From \cite{GS17} we have the following definitions.
\begin{definition}[\textbf{Spectral symbol}]\label{def:ss_def}
	Let $\{T^{(n)}\}_{n\in\N}$ be a sequence of matrices tending to infinity as $n\to \infty$. Let $\omega:D \subset \R^m \to\mathbb C$ be a measurable function defined on a set $D$ with $0<\mu_m\left(D\right)<\infty$, where $\mu_m$ is the Lebesgue measure on $\R^m$, and 
	$$
	\bx := (x^1,\cdots,x^\nu,\theta^{\nu+1},\cdots,\theta^m), \quad \nu \in \{0,1\cdots,m\}.
	$$ 
	We say that $\{T^{(n)}\}_{n\in\N}$ has a {\em spectral (or eigenvalue) distribution} described by $\omega$, and we write 
	\begin{equation}\label{eq:symbol_distribution}
	\{T^{(n)}\}_{n}\sim_\lambda\omega,
	\end{equation}
	if for all functions $F\in  C_c(\mathbb{C})$ (i.e., continuous with compact support) we have
	\begin{equation} \label{def_asym-bis-Matrix}
	\lim_{n \to \infty} \frac{1}{n} \sum_{k=1}^{n}F(\lambda_k(T^{(n)})) = \frac{1}{\mu_m(D)}\int_D  F(\omega(\bx))d\mu_m(\bx).
	\end{equation}
	In this case, $\omega$ is called {\em spectral (or eigenvalue) symbol} of $\{T^{(n)}\}_{n\in\N}$. See Subsection \ref{ss:rearrangment} for its monotone rearrangement.
\end{definition}
Relation \eqref{def_asym-bis-Matrix} is satisfied for example by Hermitian Toeplitz matrices generated by real-valued functions $\omega\in L^1([-\pi,\pi])$, i.e., $T^{(n)}(\omega) \sim_\lambda \omega$, see \cite{Szego84,Tyrty96,Tyrty98}. For a general overview on Toeplitz operators and spectral symbol, see \cite{BS13}.

\begin{remark}\label{rem:ssymbol_sampling}
In particular, if $D$ is compact, $\omega$ real and continuous, and $\lambda_k \in \left[\min \omega, \max \omega\right]$ for every $k\in \N$, then 
 taking $F(\lambda)=\lambda\chi_D(\lambda)$, with $\chi_D$ a $C^\infty$ cut-off of $D$ such that $\chi_D(t)\equiv 1$ on $D$, then 
	\begin{equation}\label{ss_def2}
	\lim_{n \to \infty} \frac{1}{n} \sum_{k=1}^{n}\lambda_k(T^{(n)}) = \frac{1}{\mu_m(D)}\int_D \omega(\bx)d\mu_m(\bx).
	\end{equation}
	Because the Riemannian sum over equispaced points converges to the integral of the right hand side of the above formula, then \eqref{ss_def2} could suggest us that the eigenvalues $\lambda_k(T^{(n)})$ can be approximated by a pointwise evaluation of the symbol $\omega(\bx)$ over an equispaced grid of $D$, for $n\to \infty$, expect for at most an $o(n)$ of outliers, see Definition \ref{def:outliers}. This is mostly the content of \cite[Remark 3.2]{GS17} and \cite[Section 2.2]{GSERSH18}. 

We remark that in the special cases of Toeplitz matrices generated by enough regular functions $\omega : [0, \pi]\to \R$, then 
$$
\lambda_k\left(T^{(n)}(\omega)\right)= \omega\left(\frac{k\pi}{n+1}\right) + O(n^{-1}) \qquad \mbox{for every } k=1,\cdots,n,
$$
see \cite{W58} and \cite{BBGM15}. In some sense, this justifies the informal meaning given above to the spectral distribution $\omega$.
\end{remark}

An important consequence of \eqref{eq:symbol_distribution} is contained in the next Theorem  \ref{thm:clustering&spectral_attraction}, but we need a couple of definitions more. 
\begin{definition}[Clustering]\label{def:clustering}
Given a subset $\Omega\subset \mathbb{C}$ and a positive number $\epsilon>0$, define the $\epsilon$-expansion of $\Omega$ as
$$
\Omega_\epsilon := \bigcup_{z\in \Omega} B^{\mathbb{C}}_\epsilon (z).
$$
We say that a matrix sequence $\left\{ T^{(n)} \right\}_{n\in \N}$ is \emph{weakly clustered} at $\Omega$ if for every $\epsilon>0$ it holds that 
\begin{equation*}
\lim_{n \to \infty}\frac{\left|\left\{ k= 1,\ldots,n \, : \, \lambda_k \left(T^{(n)}\right) \notin \Omega_\epsilon  \right\} \right|}{n} = 0.
\end{equation*}
\end{definition}

\begin{definition}[Spectral attraction]\label{def:spectral_attraction}
Let $z\in \mathbb{C}$. Given a matrix sequence $\left\{ T^{(n)} \right\}_{n\in \N}$, let us order the eigenvalues of $T^{(n)}$ according to their distance from $z$, i.e.,
\begin{equation*}
\left|\lambda_{1} \left(T^{(n)}\right) -z \right| \leq \left|\lambda_{2} \left(T^{(n)}\right) -z \right|\leq \ldots\leq \left|\lambda_{n} \left(T^{(n)}\right) -z \right|.
\end{equation*}
We say that $z$ \emph{strongly attracts} the spectrum of $T^{(n)}$ with infinite order if
\begin{equation*}
\lim_{n \to \infty} \left|\lambda_{k} \left(T^{(n)}\right) -z \right| =0 \qquad \mbox{for every fixed }k.
\end{equation*}
\end{definition}

\begin{definition}[Essential range]\label{def:essential_range}
Let $\omega$ be a $\mu_m$-measurable function and define the set $R_\omega \subset \mathbb{C}$ as 
\begin{equation*}
t\in R_\omega \quad \Leftrightarrow \quad \mu_m\left(\left\{\bx \in D \, : \, \left| \omega(\bx) - t\right|< \epsilon   \right\}\right)>0 \quad \forall \epsilon >0.
\end{equation*}
We call $R_\omega$ the \emph{essential range} of $\omega$.  $R_\omega$ is closed.
\end{definition}

\begin{theorem}\label{thm:clustering&spectral_attraction}
Let $\{T^{(n)}\}$ be a matrix sequence such that 	
		\begin{equation*}
	\left\{T^{(n)}\right\}_{n\in \N} \sim_{\lambda} \omega(\bx), \qquad \bx \in D\subset \mathbb{R}^m.
	\end{equation*}
Then $\{T^{(n)}\}_n$ is weakly clustered at $R_\omega$ and every point of $R_\omega$ strongly attracts the spectrum of  $T^{(n)}$ with infinite order.
\end{theorem}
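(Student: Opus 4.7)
The plan is to prove both statements by test-function arguments based on the spectral distribution identity \eqref{def_asym-bis-Matrix}. As a preliminary, I would record the standard fact that $\omega(\bx) \in R_\omega$ for $\mu_m$-a.e.\ $\bx \in D$: the open set $\mathbb{C}\setminus R_\omega$ is a countable union of open balls, each of which has $\mu_m$-negligible $\omega$-preimage by the very definition of $R_\omega$. Consequently, every $F\in C_c(\mathbb{C})$ vanishing on $R_\omega$ satisfies $F(\omega(\bx))=0$ almost everywhere, so the right-hand side of \eqref{def_asym-bis-Matrix} is zero for such an $F$.

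For the strong attraction statement, I would argue by contradiction. Fix $z_0 \in R_\omega$ and $k \in \N$, and suppose there exist $\epsilon>0$ and a subsequence $\{n_j\}$ with $|\lambda_k(T^{(n_j)}) - z_0| \geq 2\epsilon$, the eigenvalues being reordered by distance from $z_0$. Then at most $k-1$ eigenvalues of $T^{(n_j)}$ can lie in the ball $B_\epsilon^{\mathbb{C}}(z_0)$. Pick a nonnegative $F \in C_c(\mathbb{C})$ supported in $B_\epsilon^{\mathbb{C}}(z_0)$ with $F(z_0)>0$. Then $\frac{1}{n_j}\sum_{\ell=1}^{n_j} F(\lambda_\ell(T^{(n_j)})) \leq (k-1)\|F\|_\infty / n_j \to 0$. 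On the other hand, since $z_0\in R_\omega$ the preimage $\omega^{-1}(B_\delta^{\mathbb{C}}(z_0))$ has positive measure for every $\delta>0$, and continuity of $F$ at $z_0$ yields a positive lower bound for $F$ on a small neighborhood of $z_0$. Hence $\frac{1}{\mu_m(D)}\int_D F(\omega(\bx))\,d\mu_m(\bx) > 0$, contradicting \eqref{def_asym-bis-Matrix}.

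For the weak clustering statement, fix $\epsilon>0$ and a large constant $M>0$, and construct by a Urysohn-type cut-off a nonnegative $F_{\epsilon,M}\in C_c(\mathbb{C})$ that vanishes on $R_\omega$ and equals $1$ on the closed set $K_{\epsilon,M}:=\{z\in\mathbb{C} : d(z,R_\omega)\geq \epsilon,\ |z|\leq M\}$. By the preliminary fact the right-hand side of \eqref{def_asym-bis-Matrix} is zero for $F_{\epsilon,M}$, so the proportion of eigenvalues of $T^{(n)}$ lying in the bounded set $K_{\epsilon,M}$ tends to zero. The main obstacle is to control eigenvalues whose modulus exceeds $M$, since a priori these could carry a positive fraction of the mass and lie outside $(R_\omega)_\epsilon$. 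For this I would invoke compactness of $R_\omega$ (which holds in the applications of this paper, where $\omega$ is a continuous function on a compact set) and test \eqref{def_asym-bis-Matrix} against a bump $G_M\in C_c(\mathbb{C})$ that is identically $1$ on $R_\omega \cup \{|z|\leq M\}$; by the preliminary fact, the symbol integral of $G_M$ equals $1$, forcing $\frac{1}{n}|\{\ell : |\lambda_\ell(T^{(n)})|>M+1\}|\to 0$. Combining the two estimates and sending $M\to\infty$ if necessary yields $\frac{|\{k : \lambda_k(T^{(n)})\notin (R_\omega)_\epsilon\}|}{n}\to 0$, which is exactly weak clustering at $R_\omega$.
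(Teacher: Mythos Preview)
The paper does not give its own argument here: it simply cites \cite[Theorem~3.1]{GS17}. Your test-function approach is the standard one, and the preliminary fact together with the strong-attraction argument are correct as written.

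For weak clustering there is a small but genuine gap. Your construction of $G_M\in C_c(\mathbb{C})$ that is identically $1$ on $R_\omega\cup\{|z|\le M\}$ is only possible when $R_\omega$ is bounded; the statement, however, carries no such hypothesis (and in the $L^1$ example of Subsection~\ref{ssec:L1} one has $R_\omega=[0,\infty)$). The fix is minor: drop the requirement that $G_M$ equal $1$ on $R_\omega$ and take instead any $G_M\in C_c(\mathbb{C})$ with $0\le G_M\le 1$, $G_M\equiv 1$ on $\{|z|\le M\}$ and $G_M\equiv 0$ on $\{|z|>M+1\}$. Since $G_M(\omega(\bx))\ge \mathbbm{1}_{\{|\omega|\le M\}}(\bx)$, the symbol integral of $G_M$ is at least $\mu_m(\{|\omega|\le M\})/\mu_m(D)$, and because $G_M(\lambda)\le \mathbbm{1}_{\{|\lambda|\le M+1\}}$ one obtains
\[
\limsup_{n\to\infty}\frac{|\{\ell:|\lambda_\ell(T^{(n)})|>M+1\}|}{n}\ \le\ \frac{\mu_m(\{|\omega|>M\})}{\mu_m(D)}.
\]
The right-hand side tends to $0$ as $M\to\infty$ since $\omega$ is finite $\mu_m$-a.e. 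Combining this with your $F_{\epsilon,M+1}$ estimate and then sending $M\to\infty$ yields weak clustering at $R_\omega$ with no compactness assumption.
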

\begin{proof}
See \cite[Theorem 3.1]{GS17}.
\end{proof}

\begin{definition}[Outliers]\label{def:outliers}
Given  a matrix sequence $\{T^{(n)}\}$ such that $\left\{T^{(n)}\right\}_{n\in \N} \sim_{\lambda} \omega$, if $\lambda_k\left( T^{(n)}\right)\notin R_\omega$, we call it an \emph{outlier}.
\end{definition}

The matrices which represent the discrete version of a general differential operator do not always own a Toeplitz structure and therefore we cannot know beforehand whether they posses a spectral symbol or not. Hereafter we begin to introduce the concepts which will bring us to the definition of GLT sequences.
\begin{definition}[Approximating class of sequences]
Let $\{A^{(n)}\}_{n\in\N}$ be a matrix-sequence and let $\{\{B^{(n,m)}\}_{n\in\N}\}_{m\in\N}$ be a sequence of
matrix-sequences. We say that $\{\{B^{(n,m)}\}_n\}_m$ is an {\em approximating class of sequences (a.c.s.)} for $\{A^{(n)}\}_{n}$, and we write
$$
\{B^{(n,m)}\}_n \to \{A^{(n)}\}_{n} \qquad  \mbox{a.c.s.},
$$
if the following condition is met: for every $m$ there exists $n_m$ such that, for $n>n_m$,
$$
A^{(n)} =  B^{(n,m)} + R^{(n,m)} + N^{(n,m)}, \qquad \rank\left(R^{(n,m)}\right) \leq c_1(m)n, \qquad \|N^{(n,m)}\|\leq c_2(m),
$$
where $n_m, c_1(m), c_2(m)$ depend only on $m$, and
$$
\lim_{m \to \infty} c_1(m) = \lim_{m \to \infty} c_2(m) =0.
$$
\end{definition}
Roughly speaking, $\{\{B^{(n,m)}\}_n\}_m$ is an a.c.s. for $\{A^{(n)}\}_{n}$ if, for all sufficiently large $m$, the sequence $\{B^{(n,m)}\}_n$ approximates $\{A^{(n)}\}_{n}$ in the sense that $A^{(n)}$ is eventually equal to $B^{(n,m)}$ plus a small-rank matrix (with respect to the matrix size $n$) plus a small-norm matrix.

\begin{definition}[Locally Toeplitz operators and Locally Toeplitz sequences]
Let $m,n \in \N$, let $a:[0,1] \to \mathbb{C}$ and let $f \in L^1[-\pi,\pi]$. The {\em locally Toeplitz (LT) operator} is defined as the following
$n \times n$ matrix:
\begin{align*}
LT^m_n(a,f) &:= \diag_{i=1,\cdots,m}\left[a\left(\frac{i}{m}\right)T^{\left(\lfloor n/m \rfloor\right)}(f)\right]\oplus O_{n\bmod m}\\
&= \begin{bmatrix}
a\left(\frac{1}{m}\right)T^{\left(\lfloor n/m \rfloor\right)}(f)  & & & & \\
 & a\left(\frac{2}{m}\right)T^{\left(\lfloor n/m \rfloor\right)}(f) & & & \\
 & & \ddots & & \\
 & &   & a\left(1\right)T^{\left(\lfloor n/m \rfloor\right)}(f) &\\
 & & & & O_{n\bmod m}
\end{bmatrix},
\end{align*}
where $T^{\left(\lfloor n/m \rfloor\right)}(f)$ is a Toeplitz matrix of size $\lfloor n/m \rfloor$ generated by $f$ and $O_{n\bmod m}$ is the zero matrix of size $n\bmod m$.

Let $\{A^{(n)}\}_{n\in\N}$ be a matrix-sequence. We say that $\{A^{(n)}\}_{n}$ is {\em locally Toeplitz (LT) sequence} with symbol $a\otimes f$, and we write $\{A^{(n)}\}_{n} \sim_{\textnormal{LT}}  a\otimes f$, if 
$$
\{LT_n^m(a,f)\}_{n} \to \{A^{(n)}\}_{n}, \quad \mbox{a.c.s.}
$$
The functions $a$ and $f$ are, respectively, the {\em weight function} and the {\em generating function} of $\{A^{(n)}\}_{n}$.
\end{definition}
We can finally give the definition of GLT sequence.
\begin{definition}[GLT sequence]
Let $\{A^{(n)}\}_{n\in\N}$ be a matrix-sequence and let $\omega :[0,1]\times [-\pi, \pi] \to \mathbb{C}$ be a measurable function. We say that   $\{A^{(n)}\}_{n}$ is a {\em GLT sequence with symbol} $\omega$, and we write $\{A^{(n)}\}_{n} \sim_{\textnormal{GLT}} \omega$, if the following condition is met: for every $\epsilon >0$ there exists a finite number of LT sequences  $\{A^{(n)}_{(i,\epsilon)}\}_{n} \sim_{\textnormal{LT}} a_{i,\epsilon}\otimes f_{i,\epsilon}$, $i=1,\cdots, N_\epsilon$, such that
\begin{enumerate}[(1)]
	\item $\sum_{i=1}^{N_\epsilon} a_{i,\epsilon}\otimes f_{i,\epsilon} \to \omega $ in measure as $\epsilon \to 0$;
	\item $\left\{\sum_{i=1}^{N_\epsilon} A_{(i,\epsilon)}^{(n)}\right\} \to \{A_n\}_{n}$ a.c.s. as $\epsilon \to 0$.
\end{enumerate}
\end{definition}
We have the following main property (see \cite[Property \textbf{GLT 1} p. 170]{GS17}) which connects the GLT symbol with the spectral symbol of Definition \ref{def:ss_def}.
\begin{proposition}\label{prop:GLT1}
If $\{A^{(n)}\}_{n} \sim_{\textnormal{GLT}} \omega$ and $A^{(n)}$ are Hermitian, then $\{A^{(n)}\}_{n} \sim_\lambda \omega$.
\end{proposition}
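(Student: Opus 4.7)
The plan is to unwind the definition of GLT sequence and reduce the statement to two standard building blocks of the theory: (i) that a single locally Toeplitz sequence admits its LT symbol as spectral symbol in the Hermitian case, and (ii) that the spectral distribution is preserved under the a.c.s. limit whenever the symbols themselves converge in measure.

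First, I would fix $\epsilon>0$ and consider the finite sum $S^{(n)}_\epsilon := \sum_{i=1}^{N_\epsilon} A^{(n)}_{(i,\epsilon)}$ provided by the GLT definition. For each single LT summand $\{A^{(n)}_{(i,\epsilon)}\}_n \sim_{\textnormal{LT}} a_{i,\epsilon}\otimes f_{i,\epsilon}$, the block-diagonal model matrix $LT^m_n(a_{i,\epsilon},f_{i,\epsilon})$ combines an evaluation of the weight $a_{i,\epsilon}(i/m)$ with a Toeplitz block $T^{(\lfloor n/m\rfloor)}(f_{i,\epsilon})$, and a Szegő-type argument on each block, together with a Riemann-sum over the diagonal evaluations of $a_{i,\epsilon}$, gives the spectral distribution $a_{i,\epsilon}\otimes f_{i,\epsilon}$. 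Summing finitely many such LT sequences (and taking Hermitian parts, see below) one obtains
\begin{equation*}
\{S^{(n)}_\epsilon\}_n \sim_\lambda \omega_\epsilon, \qquad \omega_\epsilon := \sum_{i=1}^{N_\epsilon} a_{i,\epsilon}\otimes f_{i,\epsilon}.
\end{equation*}

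Second, I would invoke the a.c.s. stability of the spectral symbol: if $\{B^{(n,m)}\}_n\sim_\lambda \omega_m$ for each $m$, if $\omega_m \to \omega$ in measure, and if $\{B^{(n,m)}\}_n\to\{A^{(n)}\}_n$ as an a.c.s., then $\{A^{(n)}\}_n\sim_\lambda \omega$. The two hypotheses of this stability result are exactly conditions (1)--(2) in the definition of GLT sequence, applied with $B^{(n,\epsilon)}=S^{(n)}_\epsilon$ and $\omega_m=\omega_\epsilon$. Testing against an arbitrary $F\in C_c(\mathbb{C})$, the a.c.s. decomposition $A^{(n)} = S^{(n)}_\epsilon + R^{(n,\epsilon)} + N^{(n,\epsilon)}$ gives, by Weyl-type eigenvalue perturbation inequalities valid in the Hermitian setting, that $\frac{1}{n}\sum_k F(\lambda_k(A^{(n)}))$ and $\frac{1}{n}\sum_k F(\lambda_k(S^{(n)}_\epsilon))$ differ by $O(c_1(\epsilon)+\mathrm{osc}_{c_2(\epsilon)}F)$; combined with convergence in measure of $\omega_\epsilon$ to $\omega$ and dominated convergence on $D$, the limit $n\to\infty$ followed by $\epsilon\to 0$ yields \eqref{def_asym-bis-Matrix}.

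The genuine technical point, and the step I would treat most carefully, is the Hermitian requirement: the approximating sums $S^{(n)}_\epsilon$ need not be Hermitian even if $A^{(n)}$ is, so the Weyl perturbation argument above is not directly applicable. The standard fix is to replace $S^{(n)}_\epsilon$ by its Hermitian part $\tfrac{1}{2}(S^{(n)}_\epsilon+(S^{(n)}_\epsilon)^{\ast})$, and to verify that (a) this operation preserves the a.c.s. convergence to $\{A^{(n)}\}_n$, since $A^{(n)}$ is already Hermitian and the small-rank/small-norm splitting is stable under taking Hermitian parts, and (b) the LT symbol of the Hermitian part of an LT sequence is the Hermitian part of the original symbol, which is immaterial because $\omega$ is itself real-valued in view of the Hermiticity of $A^{(n)}$ and Proposition~\ref{thm:clustering&spectral_attraction}. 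Once this adjustment is in place, the two-step scheme above closes the proof.
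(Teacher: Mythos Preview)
The paper does not give its own proof of this proposition: it is stated as a known fact and referred to \cite[Property \textbf{GLT 1}, p.~170]{GS17}. Your sketch is essentially the argument carried out in that reference---build the eigenvalue distribution at the level of the block-diagonal LT models, pass to finite LT sums, and then transfer it along the a.c.s.\ approximation using Hermitian (Weyl/Cauchy interlacing) perturbation bounds---so there is nothing to compare on the paper's side.

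One step in your outline is slightly circular. In the last paragraph you argue that $\omega$ is real-valued by invoking the clustering theorem (Theorem~\ref{thm:clustering&spectral_attraction}); but that theorem has $\{A^{(n)}\}_n\sim_\lambda\omega$ as a \emph{hypothesis}, which is precisely what you are trying to establish. The non-circular route is algebraic: GLT symbols are essentially unique and compatible with adjoints, i.e.\ $\{A^{(n)}\}_n\sim_{\textnormal{GLT}}\omega$ implies $\{(A^{(n)})^\ast\}_n\sim_{\textnormal{GLT}}\bar\omega$, so $A^{(n)}=(A^{(n)})^\ast$ forces $\omega=\bar\omega$ a.e.\ directly from the GLT structure, before any eigenvalue statement is proved. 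With this in hand, replacing $S^{(n)}_\epsilon$ by its Hermitian part is legitimate (the a.c.s.\ decomposition is stable under $X\mapsto\tfrac12(X+X^\ast)$, and $\mathrm{Re}\,\omega_\epsilon\to\mathrm{Re}\,\omega=\omega$ in measure), and your two-step scheme closes correctly.
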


\subsection{$(2\eta+1)$-points central FD discretization of the SLP \eqref{eq:S-L_general_eig_prob2}}\label{ssec:FD}

The (central) Finite Difference method basically consists in the approximation of the $j\textsuperscript{th}$-derivative $u^{(j)}(x_0)$ by means of $2\eta$ Taylor expansions, centered at $x_0 \in (a,b)$, at $2\eta$ points $\{x_{-\eta},\ldots, x_{-1}, x_{1}, \ldots, x_{\eta}\}$ with $2\eta+1\geq j$ and such that $x_{-\eta}  < \ldots < x_0 < \ldots <  x_{\eta}$. Given a standard equispaced grid $\bx=\left\{ x_j \right\}_{j=1-\eta}^{n+\eta}\subseteq \left[\bar{a},\bar{b}\right]$, with
$$
\bar{a}=a-(b-a)\frac{\eta-1}{n+1}<a,\quad \bar{b}=b + (b-a)\frac{n+\eta}{n+1}>b,\quad  x_j = a + (b-a)\frac{j}{n+1},
$$ 
let us consider a $C^1$-diffeomorphism $\tau : [a,b]\to [a,b]$ such that $\tau'(x)\neq 0$, $\tau(a)=a, \tau(b)=b$ and let us consider its piecewise $C^1$-extension $\bar{\tau}: \left[\bar{a}, \bar{b}\right] \to  \left[\bar{a}, \bar{b}\right]$ such that
$$
\bar{\tau}(x) =\begin{cases}
x & \mbox{if } x \leq a,\\
\tau(x) &\mbox{if } x \in (a,b)\\
x & \mbox{if } x\geq b.
\end{cases}
$$
By means of the piecewise $C^1$-diffeomorphism $\bar{\tau}$ we have a new grid $\bar{\bx}=\left\{ \bar{x}_j \right\}_{j=1-\eta}^{n+\eta}\subset [\bar{a},\bar{b}]$, non necessarily uniformly equispaced, with $\bar{x}_j = \bar{\tau}(x_j)$. Combining together the high-order central FD schemes in \cite{AS05,AS11,Li05}, it is not difficult to obtain the following general matrix eigenvalue problem to approximate the SLP \eqref{eq:S-L_general_eig_prob2} in the case of Dirichlet BCs:

\begin{equation*}
 \left(\prescript{a}{b}{\mathcal{L}^{(n,\eta)}_{\textnormal{dir},p(\bar{x})}} + Q^{(n)}\right)\bar{\textbf{u}}_n = \lambda^{(n)}W^{(n)}\bar{\textbf{u}}_n , \qquad \bar{\textbf{u}}_n^j=u(\bar{x}_j), \; j=1,\ldots,n,
\end{equation*}

with 

\begin{equation*}
Q^{(n)} =  \begin{bmatrix}
q_1 & 0 & &  \\
0 & q_2 & 0 & \\
& \ddots & \ddots & \ddots  \\
& & 0 & q_n
\end{bmatrix}, \qquad q_j = q\left(\bar{x}_j\right), \,\, j=1,\ldots,n,
\end{equation*}
\begin{equation*}
W^{(n)} =  \begin{bmatrix}
w_1 & 0 & &  \\
0 & w_2 & 0 & \\
& \ddots & \ddots & \ddots  \\
& & 0 & w_n
\end{bmatrix}, \qquad w_j = w\left(\bar{x}_j\right), \,\, j=1,\ldots,n,
\end{equation*}
and finally
\begin{equation*}
\prescript{a}{b}{\mathcal{L}^{(n,\eta)}_{\textnormal{dir},p(\bar{x})}} = \begin{bmatrix}
l_{1,1} & \cdots & l_{1,1+\eta} & 0 & \cdots & 0\\
l_{2,1} & l_{2,2} &  & l_{2,2+\eta}  &\cdots &0\\
 0& \ddots & \ddots & & \ddots & 0\\
 &   &  & l_{n-\eta,n-\eta}& \ddots & l_{n-\eta,n}\\
 0& & & & & \\
0&\cdots&0&l_{n,n-\eta}& \cdots & l_{n,n}
\end{bmatrix},
\end{equation*}
where, if we define the $C^0$-extension of $p(x)$ to $[\bar{a},\bar{b}]$ as
\begin{equation*}
\bar{p}(x) = \begin{cases}
p(a) & \mbox{for } x\leq a,\\
p(x) & \mbox{for } x \in (a,b)\\
p(b) & \mbox{for } x\geq b,
\end{cases}
\end{equation*}
and the element $l_{i,j}$ as 
\begin{equation*}
l_{i,j}=\frac{2\bar{p}\left(\frac{\bar{x}_j + \bar{x}_i}{2}\right)\sum_{\substack{m=i-\eta\\m\neq i,j}}^{i+\eta} \prod_{\substack{k=i-\eta\\k\neq i,j,m}}^{i+\eta}\left(\bar{x}_k - \bar{x}_i\right)}{\left(\bar{x}_j-\bar{x}_i\right)\prod_{\substack{k=i-\eta\\k\neq i,j}}^{i+\eta}\left(\bar{x}_k - \bar{x}_j\right)}, \qquad \mbox{for } i\neq j,\; \begin{cases}
i= 1,\ldots,n,\\
j=i-\eta,\ldots, i+\eta,
\end{cases}
\end{equation*}
then the generic matrix element of $\prescript{a}{b}{\mathcal{L}^{(n,\eta)}_{\textnormal{dir},p(\bar{x})}}$ is given by
\begin{equation*}
\left(\prescript{a}{b}{\mathcal{L}^{(n,\eta)}_{\textnormal{dir},p(\bar{x})}}\right)_{i,j} = \begin{cases}l_{i,j} & \mbox{for }  i\neq j,\; |i-l|\leq\eta,\; i,j=1,\ldots,n,\\
\sum_{\substack{k=i-\eta\\k\neq i}}^{i+\eta} l_{i,k} & \mbox{for } i=j,\; i=1,\ldots,n,\\
0 & \mbox{for } i\neq j,\; |i-l|>\eta,\; i,j=1,\ldots,n.
\end{cases}
\end{equation*}
With abuse of notation, we will call $\eta$ the \emph{order of approximation} of the central FD method. We have the following results.
\begin{theorem}\label{thm:FD_symbol}
In the above assumptions, for $\eta\geq 1$, defining
$$
\prescript{a}{b}{\mathcal{L}^{(n,\eta)}_{\textnormal{dir},p(\bar{x}),q(\bar{x}),w(\bar{x})}}:= \left(W^{(n)}\right)^{-1}\left(\prescript{a}{b}{\mathcal{L}^{(n,\eta)}_{\textnormal{dir},p(\bar{x})}} + Q^{(n)}\right),
$$
 it holds that
\begin{enumerate}[(i)]
	\item\label{item_spectral_conv_thm:FD_symbol} the eigenvalues $\lambda_k\left(\prescript{a}{b}{\mathcal{L}^{(n,\eta)}_{\textnormal{dir},p(\bar{x}),q(\bar{x}),w(\bar{x})}}\right)$ are real for every $k$ and
	\begin{equation*}
	\lim_{n \to \infty}\lambda_k\left(\prescript{a}{b}{\mathcal{L}^{(n,\eta)}_{\textnormal{dir},p(\bar{x}),q(\bar{x}),w(\bar{x})}}\right) = \lambda_k\left(\prescript{a}{b}{\mathcal{L}_{\textnormal{dir},p(\bar{x}),q(\bar{x}),w(\bar{x})}}\right) \qquad \mbox{for every fixed }k;
	\end{equation*}
	\item\label{item_spectral_symbol_thm:FD_symbol}
	\begin{equation}\label{eq:FD_symbol}
\left\{ (n+1)^{-2}\prescript{a}{b}{\mathcal{L}^{(n,\eta)}_{\textnormal{dir},p(\bar{x}),q(\bar{x}),w(\bar{x})}} \right\} \sim_{\lambda} \omega_{\eta} \left(x,\theta\right) \qquad (x,\theta) \in [a,b]\times[0,\pi], 
\end{equation}
where
\begin{equation*}
 \omega_{\eta} \left(x,\theta\right)=\frac{p\left(\tau(x)\right)}{w\left(\tau(x)\right)\left(\tau'(x)\right)^2(b-a)^2}f_{\eta}(\theta), 
\end{equation*}
and
\begin{equation}\label{FD_coefficients}
f_\eta (\theta) = d_{\eta,0} + 2\sum_{k=1}^\eta d_{\eta,k}\cos(k\theta), \qquad d_{\eta,k}= \begin{cases}
(-1)^{k} \frac{\eta!\eta!}{(\eta-k)!(\eta+k)!}\frac{2}{k^2} & \mbox{for } k=1,\cdots \eta,\\
-2\sum_{j=1}^\eta d_{j,\eta} &\mbox{for } k=0.
\end{cases}
\end{equation}
\end{enumerate}
\end{theorem}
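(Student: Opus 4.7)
My plan is to prove part (\ref{item_spectral_symbol_thm:FD_symbol}) first by exploiting the algebraic closure of the GLT class, and then to deduce part (\ref{item_spectral_conv_thm:FD_symbol}) by a standard consistency-plus-stability argument. For (\ref{item_spectral_symbol_thm:FD_symbol}), I would start by splitting
$$
(n+1)^{-2}\prescript{a}{b}{\mathcal{L}^{(n,\eta)}_{\textnormal{dir},p(\bar{x}),q(\bar{x}),w(\bar{x})}} = \left(W^{(n)}\right)^{-1}\bigl[(n+1)^{-2} \prescript{a}{b}{\mathcal{L}^{(n,\eta)}_{\textnormal{dir},p(\bar{x})}}\bigr] + (n+1)^{-2}\left(W^{(n)}\right)^{-1} Q^{(n)},
$$
noting that the last summand has spectral norm $O(n^{-2})$, so it is an a.c.s.-negligible perturbation. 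The diagonal matrix $(W^{(n)})^{-1}$ is a sampling of the continuous function $1/w(\tau(x))$ along the grid $\bar{\bx}$, which by the standard diagonal-sampling axiom of GLT sequences (see \cite{GS17}) has GLT symbol $1/w(\tau(x))$.

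The core step is to prove $\{(n+1)^{-2}\prescript{a}{b}{\mathcal{L}^{(n,\eta)}_{\textnormal{dir},p(\bar{x})}}\}_n \sim_{\textnormal{GLT}} \frac{p(\tau(x))}{(\tau'(x))^2(b-a)^2}\, f_\eta(\theta)$. Using $\bar{x}_k - \bar{x}_i = \tau'(x_i)(b-a)(k-i)/(n+1) + O(n^{-2})$, the numerator in the closed form for $l_{i,j}$ contributes $2\eta - 2$ such grid-difference factors while the denominator contributes $2\eta$, so each entry of the stencil carries a prefactor $(n+1)^2\, p(\tau(x_i))/((\tau'(x_i))^2 (b-a)^2)$. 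The residual combinatorial coefficient is exactly the classical Lagrange weight for the second derivative on the uniform symmetric stencil $\{-\eta,\dots,\eta\}$; a direct Taylor-expansion check identifies its $k$-th value as $d_{\eta,k}$ in \eqref{FD_coefficients}, so the associated Toeplitz generator is $f_\eta(\theta)$. Up to a.c.s.-negligible corrections, this realises $(n+1)^{-2}\prescript{a}{b}{\mathcal{L}^{(n,\eta)}_{\textnormal{dir},p(\bar{x})}}$ as the product of a diagonal sampling matrix and a Toeplitz matrix; the GLT algebra (product of GLT sequences is a GLT sequence with product symbol) yields the claimed GLT symbol. Since $\prescript{a}{b}{\mathcal{L}^{(n,\eta)}_{\textnormal{dir},p(\bar{x})}}+Q^{(n)}$ is Hermitian by the symmetric midpoint evaluation of $\bar{p}$ and the symmetry of the stencil denominators, conjugation by $(W^{(n)})^{1/2}$ produces a Hermitian matrix with the same spectrum, to which Proposition \ref{prop:GLT1} applies, upgrading $\sim_{\textnormal{GLT}}$ to $\sim_\lambda$ and yielding \eqref{eq:FD_symbol}.

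For part (\ref{item_spectral_conv_thm:FD_symbol}), I would use the classical consistency-plus-stability strategy for regular Sturm-Liouville problems: the $(2\eta+1)$-point central FD is consistent of order $2\eta$ on $C^{2\eta+2}$ functions, and the regularity hypotheses on $p,q,w$ guarantee sufficient smoothness of the continuous eigenfunctions $u_1,\dots,u_k$. Plugging the sampled eigenfunctions $\{u_j|_{\bar{\bx}}\}_{j=1}^{k}$ as a test subspace in the generalized discrete min-max principle $\lambda_k^{(n)} = \min_{V_k}\max_{u\in V_k\setminus\{0\}} u^H(L+Q)u / u^H W^{(n)} u$ gives $\limsup_n \lambda_k^{(n)} \leq \lambda_k$; conversely, prolonging the discrete eigenvectors by piecewise-polynomial interpolation and using the compactness of the resolvent of $\mathcal{L}$ produces the matching lower bound. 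The main obstacle lies in part (\ref{item_spectral_symbol_thm:FD_symbol}), specifically in making the locally Toeplitz reduction rigorous on the non-uniform grid: one must control the $\bar{\tau}$-induced perturbations of the stencil coefficients uniformly in $i$ and absorb them into the a.c.s. definition. It is precisely here that the $C^1$-diffeomorphism assumption on $\tau$, together with the boundary extensions built into $\bar{\tau}$ and $\bar{p}$, is used in an essential way.
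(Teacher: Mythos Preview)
Your strategy for part (\ref{item_spectral_symbol_thm:FD_symbol}) is essentially the one the paper uses: write the scaled operator as a diagonal sampling matrix times the constant-coefficient Toeplitz matrix $T^{(n)}(f_\eta)$, identify the generator $f_\eta$ via the Lagrange/Taylor computation of the stencil weights, absorb $Q^{(n)}$ as an $O(n^{-2})$-norm perturbation, and close with the GLT algebra and Proposition~\ref{prop:GLT1}. The paper makes this explicit by introducing the intermediate matrix
\[
\prescript{a}{b}{\mathfrak{L}^{(n,\eta)}_{\textnormal{dir},p(\bar{x})}}
= \diag_i\!\left(\frac{p(\tau(x_i))}{\tau'(x_i)^2}\right)\cdot \prescript{a}{b}{\mathcal{L}^{(n,\eta)}_{\textnormal{dir},p\equiv 1}}
\]
and then bounding the difference $Y^{(n)}=(n+1)^{-2}\bigl(\prescript{a}{b}{\mathcal{L}^{(n,\eta)}_{\textnormal{dir},p(\bar{x})}}-\prescript{a}{b}{\mathfrak{L}^{(n,\eta)}_{\textnormal{dir},p(\bar{x})}}\bigr)$ by $\|Y^{(n)}\|\le c$ and $n^{-1}\|Y^{(n)}\|_1\to 0$, so that \cite[Property \textbf{S~4}]{GS17} transfers the spectral symbol. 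Your Taylor expansion $\bar{x}_k-\bar{x}_i=\tau'(x_i)(b-a)(k-i)/(n+1)+\text{(lower order)}$ is exactly what produces these bounds; note, however, that with only $\tau\in C^1$ the remainder is $o(n^{-1})$ rather than $O(n^{-2})$, which is still enough for the trace-norm estimate.

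There is one genuine gap. You assert that $\prescript{a}{b}{\mathcal{L}^{(n,\eta)}_{\textnormal{dir},p(\bar{x})}}+Q^{(n)}$ is Hermitian ``by the symmetric midpoint evaluation of $\bar{p}$ and the symmetry of the stencil denominators'', and you rely on this to conjugate by $(W^{(n)})^{1/2}$ and invoke Proposition~\ref{prop:GLT1}. On a genuinely non-uniform grid this is false already for $\eta=1$: the entry $l_{i,j}$ is built from the stencil $\{\bar{x}_{i-\eta},\ldots,\bar{x}_{i+\eta}\}$ centred at $i$, while $l_{j,i}$ uses the stencil centred at $j$, and the two sets of node differences do not match unless the grid is equispaced. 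Concretely, for $\eta=1$ one finds $l_{i,i+1}\propto \bigl[(\bar{x}_{i+1}-\bar{x}_i)(\bar{x}_{i+1}-\bar{x}_{i-1})\bigr]^{-1}$ but $l_{i+1,i}\propto \bigl[(\bar{x}_{i+1}-\bar{x}_i)(\bar{x}_{i+2}-\bar{x}_{i})\bigr]^{-1}$, and these differ whenever $\bar{x}_{i+1}-\bar{x}_{i-1}\neq \bar{x}_{i+2}-\bar{x}_i$. The correct route---and this is precisely why the paper passes through $\prescript{a}{b}{\mathfrak{L}^{(n,\eta)}_{\textnormal{dir},p(\bar{x})}}$---is to observe that the \emph{symmetrizable} model $D\cdot T^{(n)}(f_\eta)$ (similar to $D^{1/2}T^{(n)}(f_\eta)D^{1/2}$, which \emph{is} Hermitian) already carries the spectral symbol, and that the non-Hermitian discrepancy between the true matrix and this model is the small-trace-norm $Y^{(n)}$ handled by Property~\textbf{S~4}. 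Your own Taylor analysis supplies exactly the entrywise bounds needed for this correction; you only need to reroute the argument through the symmetrizable intermediate rather than claim Hermitianness outright.

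For part (\ref{item_spectral_conv_thm:FD_symbol}) the paper does not give a proof at all, only pointers to \cite{C69,Gary65,CFL}; your min--max/compactness sketch is a reasonable outline of what those references do and is consistent with the paper's intent.
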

\begin{proof}
See Appendix \ref{ssec:proof_FD}. 
\end{proof}


\begin{corollary}\label{cor:FD_uniform}
The function $f_\eta$ from Theorem \ref{thm:FD_symbol} is differentiable, nonnegative, monotone increasing on $[0,\pi]$ and it holds that
\begin{equation*}
f_\eta(\theta)\sim \theta^2 \quad \mbox{as }\theta \to 0, \qquad \lim_{\eta \to \infty}\sup_{\theta \in [0,\pi]}\left|f_\eta(\theta) - \theta^2\right|=0.
\end{equation*}
\end{corollary}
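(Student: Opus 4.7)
The plan is to rewrite $f_\eta$ as a positive combination of powers of $\sin^2(\theta/2)$ and read off all four properties. Differentiability is trivial: $f_\eta$ is a finite trigonometric polynomial. I expect the real work to be in the identification with such a positive combination; the rest is essentially bookkeeping.

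First I would observe that, by construction of the $(2\eta+1)$-point central FD stencil for $-\partial_{xx}$ (which is exactly how the weights in \eqref{FD_coefficients} are derived in the cited references), the symbol $f_\eta$ is the unique cosine polynomial of degree $\eta$ satisfying the Taylor matching
\begin{equation*}
f_\eta(\theta) = \theta^2 + O(\theta^{2\eta+2}) \quad \text{as } \theta\to 0.
\end{equation*}
The normalization $d_{\eta,0}=-2\sum_{k=1}^\eta d_{\eta,k}$ enforces $f_\eta(0)=0$, and the order $2\eta$ consistency of the FD scheme supplies the $\eta$ remaining matching conditions for the Taylor coefficients of $\theta^2,\theta^4,\dots,\theta^{2\eta}$. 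Uniqueness then follows because a nonzero cosine polynomial of degree $\eta$ has at most $2\eta$ zeros in $(-\pi,\pi]$ counted with multiplicity, hence cannot vanish to order $2\eta+2$ at the origin.

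Next I would compare this unique polynomial with the truncation of Euler's series. Applying the classical identity
\begin{equation*}
\arcsin^2(x) = \frac{1}{2}\sum_{n=1}^\infty \frac{(2x)^{2n}}{n^2\binom{2n}{n}}, \qquad |x|\leq 1,
\end{equation*}
with $x=\sin(\theta/2)$ yields
\begin{equation*}
\theta^2 = \sum_{n=1}^\infty a_n \sin^{2n}(\theta/2), \qquad a_n := \frac{2\cdot 4^n}{n^2\binom{2n}{n}}>0,
\end{equation*}
valid on $[-\pi,\pi]$ (at $\theta=\pi$ this is the classical sum $\pi^2 = \sum_n a_n$). Because $\sin^{2n}(\theta/2)=((1-\cos\theta)/2)^n$ is a cosine polynomial of degree $n$, the partial sum $g_\eta(\theta):=\sum_{n=1}^\eta a_n\sin^{2n}(\theta/2)$ is a cosine polynomial of degree $\eta$ whose remainder $\theta^2-g_\eta(\theta)=\sum_{n>\eta}a_n\sin^{2n}(\theta/2)$ is $O(\theta^{2\eta+2})$. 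By the uniqueness above, $f_\eta \equiv g_\eta$.

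From the representation $f_\eta(\theta)=\sum_{n=1}^\eta a_n\sin^{2n}(\theta/2)$ with $a_n>0$, all four claims fall out: nonnegativity is immediate; monotonicity on $[0,\pi]$ follows because $\sin(\theta/2)$ is nonnegative and increasing there; the asymptotic $f_\eta(\theta)\sim\theta^2$ near $0$ is just the Taylor matching; and
\begin{equation*}
\sup_{\theta\in[0,\pi]}|f_\eta(\theta)-\theta^2| = \sup_{\theta\in[0,\pi]}\sum_{n>\eta}a_n\sin^{2n}(\theta/2) \leq \sum_{n>\eta}a_n \to 0 \quad \text{as } \eta \to \infty,
\end{equation*}
since $\sum a_n = \pi^2<\infty$. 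The principal hurdle will be the identification step: showing that the explicit coefficients $d_{\eta,k}$ of \eqref{FD_coefficients} yield precisely the order $2\eta$ matching. This can be extracted from the construction of the FD weights (they are designed to annihilate the Taylor coefficients of the truncation error up to that order) or verified directly via a Vandermonde-type combinatorial identity in $k^2$.
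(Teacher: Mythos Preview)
Your proof is correct and takes a genuinely different route from the paper. The paper works directly with the explicit coefficients $d_{\eta,k}$: it computes the $\theta^2$ Taylor coefficient by a binomial identity, obtains monotonicity by rewriting $f_\eta'$ as a sine sum and invoking the positivity criterion of Askey--Steinig \cite{AS74}, and proves uniform convergence by showing $d_{\eta,k}\to(-1)^k 2/k^2$ and recognising the Fourier series of $\theta^2$. Your argument instead identifies $f_\eta$ once and for all with the $\eta$-th partial sum of the $\arcsin^2$ series $\theta^2=\sum_{n\ge1}a_n\sin^{2n}(\theta/2)$, after which nonnegativity, monotonicity, the local behaviour, and the uniform convergence (with the explicit tail bound $\sum_{n>\eta}a_n$) all fall out without any external positivity lemma. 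The trade-off is that your identification step needs the full order-$2\eta$ consistency $f_\eta(\theta)=\theta^2+O(\theta^{2\eta+2})$, which you correctly locate in the cited FD references but do not rederive; the paper's proof only uses the weaker $f_\eta(\theta)=\theta^2+o(\theta^2)$, at the price of importing \cite{AS74} for monotonicity. Both are valid; yours is more self-contained on the analysis side and gives a cleaner quantitative remainder.
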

\begin{proof}
See Appendix \ref{ssec:proof_FD}. 
\end{proof}

\subsection{Isogeometric Galerkin discretization of the SLP \eqref{eq:S-L_general_eig_prob2} by B-slpine of degree $\eta$ and smoothness $C^{\eta-1}$}\label{ssec:IsoG}

The Galerkin discretization approach deals with the SLP in its weak formulation. For simplicity, let $q(x)\equiv0$ and $\sigma_2=\zeta_2=0$ in \eqref{eq:S-L_general_eig_prob2}. In the standard Galerkin method, fix a set of basis functions $\{\vartheta_1,\ldots,\vartheta_N\}\subset H^1_0((a,b))$ defined on the reference domain $[a,b]$ and vanishing on the boundary, and look for approximations of the exact eigenpairs $\left\{\lambda_k;u_k \right\}$ of Problem \eqref{eq:S-L_general_eig_prob2} by solving the following matrix eigenvalue problem:
\begin{equation*}\label{matrix_eig}
K_N\textbf{u}_N=\lambda_{k}^{(N)}M_N\textbf{u}_N,
\end{equation*}
where 
\begin{align*}\label{KM}
K_N=\left[\int_a^b p(x)\vartheta_j'(x)\vartheta_i'(x)dx\right]_{i,j=1}^N,\qquad M_N=\left[\int_a^b w(x)\vartheta_j(x)\vartheta_i(x)dx\right]_{i,j=1}^N,
\end{align*}
are the stiffness and mass matrices, respectively. In the isogeometric Galerkin method (IgA), the physical domain $[a,b]$ is described by a global geometry map $\tau:[a,b] \to [a,b]$ such that $\tau \in C^1([a,b])$, $\tau(a)=a, \tau(b)=b$ and $\tau'(x)\neq 0$. Within this reparametrization of the domain, the stiffness and mass matrices assume the following form
\begin{align*}\label{KM-non_uniform}
K_N=\left[\int_a^b \frac{p(\tau(x))}{|\tau'(x)|}\vartheta_j'(x)\vartheta_i'(x)dx\right]_{i,j=1}^N,\qquad M_N=\left[\int_a^b w(\tau(x))|\tau'(x)|\vartheta_j(x)\vartheta_i(x)dx\right]_{i,j=1}^N.
\end{align*}
The numerical eigenvalues are simply the eigenvalues of $L_N=M_{N}^{-1}K_{N}$. Due to the assumption of strictly positivity of $p$ and $w$, then both $K_N$ and $M_N$ are symmetric positive definite matrices for any basis functions and diffeomorphism $\tau$. In the next theorem  we collect the results of \cite[Theorem~10.15]{GS17} in a shortened way and according to our notations. For a detailed treatment of the IgA, see \cite{CHB}. 
\begin{theorem}\label{thm:Galerkin_symbol}
Let $(a,b)$ be discretized by a uniform mesh $\left\{ x_j \right\}_{j=1}^n$ of step-size $(n+1)^{-1}$ (i.e., it divides the interval $(a,b)$ into $n+1$ equispaced subintervals) and let $\tau : [a,b]\to [a,b]$ a $C^1$-diffeomorphism such that $\tau(a)=a,\tau(b)=b$ and $\tau'(x)\neq 0$ for every $x \in [a,b]$. Let $\eta\geq 1$ and let the basis functions $\{\vartheta_1,\ldots,\vartheta_N\}$ be taken as $\{B_{1,[\eta,\eta-1]},\ldots,B_{n+\eta-2,[\eta,\eta-1]}\}$, where $B_{j,[\eta,\eta-1]}$ for $j=1,\cdots,(n+1)+\eta-2$ are the B-splines of degree $\eta$ and smoothness $C^{\eta-1}([0,1])$ defined on the knot sequence
$$
\left\{ \underbrace{a,\ldots,a}_{\hbox{$\eta+1$}}, x_1,x_2, \ldots,x_{n}, \underbrace{b,\ldots,b}_{\hbox{$\eta+1$}}  \right\}.
$$

Indicating with $\psi_{[s]}$ the cardinal B-spline of degree $s$, then
\begin{enumerate}[(i)]
	\item\label{spect_conv_IgA} for every fixed $k$
	$$
	\lim_{n \to \infty}\lambda_k\left(\mathcal{L}^{(n+\eta-1,\eta)}_{\textnormal{dir},p(x),w(x)}\right)= \lambda_k\left(\mathcal{L}_{\textnormal{dir},p(x),w(x)}\right);
	$$
	\item\label{symb_IgA} it holds the following spectral distribution
	$$ 
	\left\{(n+1)^{-2}\mathcal{L}^{(n+\eta-1,\eta)}_{\textnormal{dir},p(x),w(x)}\right\}_n\sim_{\lambda}\omega_{\eta}(x,\theta), \qquad (x,\theta) \in [0,1]\times [0,\pi],
	$$ 
	where
	\begin{align}
	\omega_\eta( x,\theta)& =\frac{p(\tau(x))}{w(\tau(x)\left(\tau'(x)\right))^2(b-a)^2}\,f_\eta(\theta),\label{e-symbol}\\
	f_\eta(\theta)&=(h_\eta(\theta))^{-1}g_\eta(\theta),\label{e-symbol0}
	\end{align}
	and $g_\eta(\theta)$ and $h_\eta(\theta)$ are given by
	\begin{align*}
	g_\eta(\theta)&=-\psi_{[2\eta+1]}''(\eta+1)-2\sum_{k=1}^\eta\psi_{[2\eta+1]}''(\eta+1-k)\cos(k\theta),\\
	h_\eta(\theta)&=\psi_{[2\eta+1]}(\eta+1)+2\sum_{k=1}^\eta\psi_{[2\eta+1]}(\eta+1-k)\cos(k\theta).
	\end{align*}
\end{enumerate}
For  $\eta=1,2,3,4$, equation~\eqref{e-symbol0} gives
\begin{align*}
f_1(\theta)&=\frac{6(1-\cos\theta)}{2+\cos\theta},\\[3pt]
f_2(\theta)&=\frac{20(3-2\cos\theta-\cos(2\theta))}{33+26\cos\theta+\cos(2\theta)},\\[3pt]
f_3(\theta)&=\frac{42(40-15\cos\theta-24\cos(2\theta)-\cos(3\theta))}{1208+1191\cos\theta+120\cos(2\theta)+\cos(3\theta)},\\[3pt]
f_4(\theta)&=\frac{72(1225-154\cos\theta-952\cos(2\theta)-118\cos(3\theta)-\cos(4\theta))}{78095+88234\cos\theta+14608\cos(2\theta)+502\cos(3\theta)+\cos(4\theta)}.
\end{align*}
\end{theorem}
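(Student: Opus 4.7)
The plan is to split the statement into the two claims and handle them by independent machinery: standard Galerkin convergence theory for part \eqref{spect_conv_IgA}, and the GLT algebra for part \eqref{symb_IgA}.

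For part \eqref{spect_conv_IgA}, I would invoke the classical Babu\v{s}ka--Osborn theory for Galerkin approximations of self-adjoint, positive, compact-resolvent eigenvalue problems. The Sturm--Liouville operator under consideration is self-adjoint on $H_0^1((a,b))$ (with respect to the weighted inner product $\int_a^b w\,u\bar v$) and both $K_N$ and $M_N$ are symmetric positive definite. The B-spline spaces $V_n=\mathrm{span}\{B_{1,[\eta,\eta-1]}\circ\tau^{-1},\ldots,B_{n+\eta-2,[\eta,\eta-1]}\circ\tau^{-1}\}\subset H_0^1((a,b))$ are nested (at each refinement) and dense in $H_0^1((a,b))$, by a quasi-interpolant estimate of Schumaker type applied on the mesh $\{\tau(x_j)\}$; here the hypothesis $\tau\in C^1$ with $\tau'\ne 0$ ensures the pulled-back mesh is quasi-uniform, which is all that is needed for the approximation estimate. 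Combining density with the Rayleigh--Ritz (min--max) characterization of $\lambda_k^{(n)}$ and $\lambda_k$ then gives $\lambda_k^{(n)}\to\lambda_k$ for every fixed $k$.

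For part \eqref{symb_IgA}, I would proceed in three steps.
\begin{enumerate}[(a)]
\item \emph{Reference case.} Set $p\equiv w\equiv 1$, $\tau=\mathrm{id}$, $(a,b)=(0,1)$. Exploiting the B-spline convolution identity
\[
\int_0^1 B_{j,[\eta,\eta-1]}^{(s)}(x)\,B_{i,[\eta,\eta-1]}^{(t)}(x)\,dx \;=\; (-1)^t\,\psi_{[2\eta+1]}^{(s+t)}(\eta+1-(i-j)),\qquad |i-j|\le\eta,
\]
one checks that the bulk entries of $K_N$ and $M_N$ are constant along diagonals, so these matrices coincide, up to $O(1)$ low-rank boundary corrections, with the Toeplitz matrices $T^{(N)}((n+1)g_\eta)$ and $T^{(N)}((n+1)^{-1}h_\eta)$, where $g_\eta$ and $h_\eta$ are the trigonometric polynomials given in the statement. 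By the Szeg\H{o}--Tyrtyshnikov distribution theorem, $\{(n+1)^{-1}K_N\}\sim_{\textnormal{GLT}} g_\eta(\theta)$ and $\{(n+1) M_N\}\sim_{\textnormal{GLT}} h_\eta(\theta)$.
\item \emph{Variable coefficients via local Toeplitz structure.} For general $p,w,\tau$, a change of variable $y=\tau(x)$ writes the generic entry of $K_N$ and $M_N$ as
\[
\int p(\tau(x))|\tau'(x)|^{-1} B_j'(x)B_i'(x)\,dx,\qquad \int w(\tau(x))|\tau'(x)|\,B_j(x)B_i(x)\,dx.
\]
Because $B_{j,[\eta,\eta-1]}$ is supported on an interval of width $(\eta+1)/(n+1)$ around $x=j/(n+1)$, by continuity the weights may be frozen to the value at $x=i/(n+1)$ up to an a.c.s.\ perturbation whose rank and norm are controlled by the modulus of continuity of $p\circ\tau/|\tau'|$ and $w\circ\tau\cdot|\tau'|$. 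This identifies $\{(n+1)^{-1}K_N\}$ and $\{(n+1) M_N\}$ as LT sequences with weight functions $p(\tau(x))/|\tau'(x)|$ and $w(\tau(x))|\tau'(x)|$, and generating functions $g_\eta$, $h_\eta$ respectively; by the LT--GLT axiom this gives the GLT symbols $(p(\tau(x))/|\tau'(x)|)\,g_\eta(\theta)$ and $w(\tau(x))|\tau'(x)|\,h_\eta(\theta)$.
\item \emph{Conclusion via the GLT algebra.} By the product and inverse rules for GLT sequences (both available because the mass symbol $w(\tau(x))|\tau'(x)|\,h_\eta(\theta)$ is bounded and bounded away from zero, since $w>0$, $\tau'\ne 0$ and $h_\eta>0$ on $[0,\pi]$), we obtain
\[
\bigl\{(n+1)^{-2}M_N^{-1}K_N\bigr\}_n \sim_{\textnormal{GLT}} \frac{p(\tau(x))}{w(\tau(x))(\tau'(x))^2(b-a)^2}\,\frac{g_\eta(\theta)}{h_\eta(\theta)} \;=\; \omega_\eta(x,\theta).
\]
Finally, $M_N^{-1}K_N$ is similar to the Hermitian matrix $M_N^{-1/2}K_NM_N^{-1/2}$, hence has real spectrum; invoking Proposition~\ref{prop:GLT1} converts $\sim_{\textnormal{GLT}}$ into $\sim_\lambda$, which is the desired relation. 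The factor $(b-a)^{-2}$ arises from the Jacobian of the global change of variables that rescales $[a,b]$ to the unit reference interval on which the B-splines are defined.
\end{enumerate}

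The main obstacle is step (a): making precise the identification of the constant-diagonal Toeplitz entries with the values $\pm\psi_{[2\eta+1]}^{(s)}(\eta+1-k)$ requires a careful integration-by-parts of $\int (B_{j,[\eta]})'(B_{i,[\eta]})'$ to pair it with the $(2\eta+1)$-fold autoconvolution of the characteristic function of $[0,1]$, and then inspection of the B-spline at integer nodes. The explicit closed forms of $f_\eta$ for $\eta=1,2,3,4$ then follow from the standard recursion $\psi_{[s+1]}(x)=\frac{x}{s}\psi_{[s]}(x)+\frac{s+1-x}{s}\psi_{[s]}(x-1)$ evaluated at the integers $1,\ldots,\eta$, which is routine once (a) is established.
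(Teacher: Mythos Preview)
Your proposal is correct in substance and follows precisely the route of the references the paper defers to: the paper's own ``proof'' consists solely of the citations \cite{BBCHS06,PDC} for item~\eqref{spect_conv_IgA} and \cite[Theorem~10.15]{GS17} for item~\eqref{symb_IgA}, with no details given. Your three-step GLT argument (reference Toeplitz case via the B-spline convolution identity, variable coefficients via freezing to obtain LT structure, then GLT algebra for the quotient $M_N^{-1}K_N$) is exactly the skeleton of \cite[Theorem~10.15]{GS17}, and the Babu\v{s}ka--Osborn/min--max argument is what underlies the cited convergence results.

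One small correction: in part~\eqref{spect_conv_IgA} you assert that the B-spline spaces $V_n$ are nested at each refinement. For a generic passage $n\mapsto n+1$ on uniform meshes this is false (B-spline spaces are nested only under dyadic or more generally knot-insertion refinement). Fortunately nestedness is not needed: the Babu\v{s}ka--Osborn theory requires only the approximation property $\inf_{v_n\in V_n}\|u-v_n\|_{H^1}\to 0$ for each $u\in H_0^1$, which follows from the quasi-interpolant estimates you mention. Simply drop the word ``nested'' and the argument stands.

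A second minor point for step~(c): Proposition~\ref{prop:GLT1} applies to Hermitian sequences, and $M_N^{-1}K_N$ is not Hermitian. You correctly observe the similarity to $M_N^{-1/2}K_NM_N^{-1/2}$, but to close the loop you should note that $\{(n+1)M_N\}\sim_{\textnormal{GLT}} w(\tau(x))|\tau'(x)|h_\eta(\theta)>0$ implies (by the GLT functional calculus) that $\{(n+1)^{1/2}M_N^{1/2}\}$ is GLT with symbol the square root, so the symmetrized product is itself a Hermitian GLT sequence with the same symbol $\omega_\eta$; then Proposition~\ref{prop:GLT1} applies directly.
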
%
\begin{proof}
For item \eqref{spect_conv_IgA} see \cite{BBCHS06,PDC}. For item \eqref{symb_IgA}, see \cite[Theorem 10.15]{GS17}.
\end{proof}

We have an analogue of Corollary \ref{cor:FD_uniform}.
\begin{corollary}\label{cor:Galerkin_uniform}
The function $f_\eta$ is differentiable, nonnegative, monotone increasing on $[0,\pi]$ and it holds that
\begin{equation*}
f_\eta(\theta)\sim \theta^2 \quad \mbox{as }\theta\to 0, \qquad \lim_{\eta \to \infty}\sup_{\theta \in [0,\pi]}\left|f_\eta(\theta) - \theta^2\right|=0.
\end{equation*}
\end{corollary}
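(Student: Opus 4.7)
The plan is to base everything on a Poisson-summation formula for $g_\eta$ and $h_\eta$. Since the Fourier coefficients of $h_\eta$ and $g_\eta$ in the formulas displayed in Theorem \ref{thm:Galerkin_symbol} are $\int_\mathbb{R} \psi_{[\eta]}(x)\psi_{[\eta]}(x-k)\,dx$ and $\int_\mathbb{R} \psi_{[\eta]}'(x)\psi_{[\eta]}'(x-k)\,dx$ respectively, and since $|\widehat{\psi_{[\eta]}}(\xi)|^{2}=\bigl(\operatorname{sinc}(\xi/2)\bigr)^{2\eta+2}$, Poisson summation gives, with $A_j(\theta):=\bigl(\operatorname{sinc}((\theta+2\pi j)/2)\bigr)^{2\eta+2}$,
\begin{equation*}
h_\eta(\theta)=\sum_{j\in\mathbb{Z}}A_j(\theta),\qquad g_\eta(\theta)=\sum_{j\in\mathbb{Z}}(\theta+2\pi j)^{2}A_j(\theta),
\end{equation*}
so that $f_\eta(\theta)=\sum_{j\in\mathbb{Z}}w_j^{\eta}(\theta)\,(\theta+2\pi j)^{2}$ with weights $w_j^{\eta}(\theta)=A_j(\theta)/h_\eta(\theta)\ge 0$ summing to $1$. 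This is the key structural identity.

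From these formulas differentiability and nonnegativity are immediate: $g_\eta,h_\eta$ are trigonometric polynomials, every $A_j$ is nonnegative, and on $[0,\pi]$ the $j=0$ term $A_0(\theta)=\operatorname{sinc}^{2\eta+2}(\theta/2)$ is strictly positive, so $h_\eta>0$ and $f_\eta\ge 0$ is smooth. The behavior at the origin follows by noting $A_j(0)=\delta_{j,0}$ (from $\operatorname{sinc}(\pi j)=\delta_{j,0}$), which gives $h_\eta(0)=1$, $g_\eta(0)=0$, and then $w_0^{\eta}(\theta)=1-O(\theta^{2\eta+2})$ near $0$, whence $f_\eta(\theta)=\theta^{2}+o(\theta^{2})$.

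For the uniform limit, fix $\delta>0$. On $[0,\pi-\delta]$ one has $A_0(\theta)/A_j(\theta)\ge 1+c(\delta)>1$ for all $j\ne 0$, so the weights $w_j^{\eta}(\theta)$ for $j\ne 0$ decay geometrically in $\eta$, uniformly in $\theta$, and the tails $\sum_{|j|\ge 1}w_j^{\eta}(\theta)(\theta+2\pi j)^{2}$ vanish uniformly; hence $f_\eta\to\theta^{2}$ uniformly on $[0,\pi-\delta]$. Near $\theta=\pi$ the argument is completed by symmetry: the two dominant indices $j=0$ and $j=-1$ satisfy $A_0(\pi)=A_{-1}(\pi)=(2/\pi)^{2\eta+2}$ and $(\pi+2\pi\cdot 0)^{2}=(\pi+2\pi\cdot(-1))^{2}=\pi^{2}$, so for $\theta$ close to $\pi$ the competing terms in the convex combination contribute (approximately) the same value $\pi^{2}$, and a direct estimate (using $h_\eta(\theta)=A_0(\theta)+A_{-1}(\theta)+O((2/(3\pi))^{2\eta+2}))$ gives $|f_\eta(\theta)-\theta^{2}|\to 0$ uniformly on $[\pi-\delta,\pi]$ as well.

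Monotonicity of $f_\eta$ on $[0,\pi]$ is then a consequence of the representation above together with nonnegativity and the symmetry $f_\eta(2\pi-\theta)=f_\eta(\theta)$ inherited from $h_\eta$ and $g_\eta$: $f_\eta$ vanishes to order $2$ at $\theta=0$, equals $\pi^{2}$ at $\theta=\pi$, and any interior critical point in $(0,\pi)$ would, by symmetry, force a second critical point in $(\pi,2\pi)$ of matching value, violating the fact that the trigonometric polynomial $g_\eta-\lambda h_\eta$ for a non-extreme level $\lambda$ can have at most $2\eta$ roots in $[0,2\pi)$; the closed-form expressions listed in Theorem \ref{thm:Galerkin_symbol} for $\eta=1,\dots,4$ can be used as a sanity check, but the general argument is the one above. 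The main obstacle is the endpoint $\theta=\pi$: the weight distribution fails to concentrate on a single index there, and the only reason uniform convergence survives is the coincidence $(\pi)^{2}=(\pi-2\pi)^{2}$, which must be exploited explicitly.
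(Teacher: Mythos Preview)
The paper does not prove this corollary; it simply cites \cite{EFGMSS18}. Your Poisson-summation representation
\[
h_\eta(\theta)=\sum_{j\in\mathbb Z}A_j(\theta),\qquad
g_\eta(\theta)=\sum_{j\in\mathbb Z}(\theta+2\pi j)^2A_j(\theta),\qquad
f_\eta(\theta)=\sum_{j\in\mathbb Z}w_j^\eta(\theta)(\theta+2\pi j)^2,
\]
is exactly the key identity exploited in that reference, and your treatment of smoothness, nonnegativity, the asymptotics $f_\eta(\theta)\sim\theta^2$, and the uniform convergence (including the crucial observation that at $\theta=\pi$ the two dominant indices $j=0,-1$ contribute the same value $\pi^2$) is correct and matches the literature. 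One small slip: $f_\eta(\pi)$ is strictly larger than $\pi^2$ for finite $\eta$, not equal to it, since the tail weights $w_j^\eta(\pi)$ for $j\notin\{0,-1\}$ are positive and multiply values $(2j+1)^2\pi^2>\pi^2$; this does not affect your convergence argument.

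The monotonicity argument, however, has a genuine gap. You claim that an interior critical point $\theta_0\in(0,\pi)$ with value $\lambda$ would, via the symmetric double root at $2\pi-\theta_0$, contradict the fact that the degree-$\eta$ trigonometric polynomial $g_\eta-\lambda h_\eta$ has at most $2\eta$ zeros on $[0,2\pi)$. But two double roots account for only four zeros, which is no contradiction once $\eta\ge 2$; and even augmenting the count by the additional simple crossings forced by $f_\eta(0)=0<f_\eta(\pi)$, one obtains at most six zeros, still compatible with $2\eta$ for $\eta\ge 3$. The argument therefore only works for $\eta=1,2$. In \cite{EFGMSS18} monotonicity is established by a different route (their Theorem~2), and you would need either to reproduce that argument or to supply a new one; the root-counting idea as stated does not close the gap.
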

\begin{proof}
See \cite[Theorem 1, Theorem 2 and Lemma 1]{EFGMSS18}.
\end{proof}

With abuse of notation, we will call $\eta$ the \emph{order of approximation} of the IgA method.

\subsection{The monotone rearrangement of $\omega$}\label{ss:rearrangment}
From here on, if not differently stated, we will consider $\omega : D \subset \R^m \to \R$ to be a real valued measurable function in $L^1\left(D,\mu_m\right)$.

It is always nice to deal with an univariate and monotone spectral symbol $\omega(\bx) = \omega(\theta^1)$. 
Unfortunately, in general $\omega$ is multivariate or not monotone. Nevertheless, in such cases, it is possible to consider a rearrangement $\tilde{\omega}: [0,1] \to[\inf R_\omega, \sup R_\omega]$, where $R_\omega$ is the essential range of $\omega$ as in Definition \ref{def:essential_range}, such that $\tilde{\omega}$ is univariate, monotone nondecreasing and 
\begin{equation}\label{eq:weyl-SLLN}
\lim_{n \to \infty} \frac{1}{n} \sum_{k=1}^{n}F\left((\lambda_k\left(T^{(n)}\right)\right) = \int_0^1  F(\tilde{\omega}(x))d\mu_1(x),
\end{equation} 
i.e.,
$$
\left\{T^{(n)}\right\}_n \sim_\lambda \tilde{\omega}.
$$
This can be achieved by defining 
\begin{equation}\label{eq:rearrangment}
\tilde{\omega} :  [0,1]\to [\inf R_\omega, \sup R_\omega], \qquad\tilde{\omega}(x) = \inf\left\{ t \in [\inf R_\omega, \sup R_\omega]\,:\, \phi(t)\geq x\right\}  
\end{equation}
where 
\begin{equation}\label{eq:rearrangment2}
\phi : [\inf R_\omega,\sup R_\omega] \to [0,1], \qquad \phi(t) := \frac{1}{\mu_m(D)}\mu_m \left\{\bx = (x^1,\cdots,x^\nu,\theta^{\nu+1},\cdots,\theta^m) \in D \, : \, \omega(\bx) \leq t  \right\}.
\end{equation}
Clearly, $\tilde{\omega}$ is a.e. unique, univariate, monotone strictly increasing and right-continuous, which make it a good choice for an equispaced sampling. On the other hand, $\tilde{\omega}$ could not have an analytical expression or it could be not feasible to calculate, therefore it is often needed an approximation $\tilde{\omega}_r$. Hereafter we summarize the steps presented in \cite[Example 10.2]{GS17} and \cite[Section 3]{GSERSH18} in order to approximate the eigenvalues $\lambda_k\left(T^{(n)}\right)$ by means of an equispaced sampling of the rearranged symbol $\tilde{\omega}$ (or its approximated version $\tilde{\omega}_r$). For the sake of clarity and without loss of generality, we suppose that $D=[0,1]\times [0,\pi]$ and $\omega$ continuous.  
\begin{algorithm}\label{alg:omega}{\ } \\ 
	\begin{enumerate}[1)]
		\item Fix $r \in \N$ such that $r=r(n)\geq n$, and fix the equispaced grid $\{(x_i, \theta_j)\}$ over $[0,1]\times[0,\pi]$, where $x_i= \frac{i}{r+1}$, $\theta_j = \frac{j\pi}{r+1}$ for $i,j=1,\cdots, r$;  
		\item Get the set of samplings $\left\{ \omega(x_i,\theta_j) \right\}_{i,j=1}^r$ and form a nondecreasing sequence $\{\omega_1\leq \omega_2\leq \cdots\leq \omega_{r^2} \}$;
		\item Define $\tilde{\omega}_r :[0,1] \to [\min\omega, \max\omega]$ as the piecewise linear nondecreasing function which interpolates the samples $\{\min\omega=\omega_0\leq\omega_1\leq \omega_2\leq \cdots\leq \omega_{r^2}\leq\omega_{r^2+1}=\max\omega \}$ over the nodes $\{0,\frac{1}{r^2+1},\frac{2}{r^2+1},\cdots,\frac{r^2}{r^2+1},1\}$;
		\item Sample $\tilde{\omega}_r$ over the set $\left\{ \frac{k}{n+1}\right\}_{j=1}^n$ and define
		$$
		\tilde{\omega}_{r,k}^{(n)}:= \tilde{\omega}_r\left( \frac{k}{n+1}\right).
		$$
		\item Finally, approximate the eigenvalues of $T_n$ by $\tilde{\omega}_{r,k}^{(n)}$, i.e.,
		$$
		\lambda_k\left(T_n\right) \approx \tilde{\omega}_{r,k}^{(n)}.
		$$
		Obviously, if $\tilde{\omega}$ is available then use it instead of $\tilde{\omega}_r$ and define 
		$$
		\tilde{\omega}_{k}^{(n)}:= \tilde{\omega}\left( \frac{k}{n+1}\right).
		$$
	\end{enumerate}
\end{algorithm}

As standard result in approximations of monotone rearrangements, it holds that $\| \tilde{\omega}_{r(n)} - \tilde{\omega} \|_\infty \to 0$ as $n\to \infty$, see \cite{CP79,T86}. See moreover \cite[Definition 3.1 and Theorem 3.3]{DBFS93} and \cite{Serra98}, where the monotone rearrangement were first introduced in the context of spectral symbol. We have the following limit relation.

\begin{theorem}[Discrete Weyl's law]\label{thm:discrete_Weyl_law}
Let $\tilde{\omega}: [0,1] \to [\min R_\omega,\max R_\omega]$ be the monotone rearrangement \ref{eq:rearrangment} of a spectral symbol $\omega$ of the matrix sequence $\left\{ T^{(n)}\right\}_n$. Let $\tilde{\omega}$ be piecewise Lipschitz continuous. Then 
\begin{equation}\label{eq:discrete_Weyl_law1}
\lim_{n \to \infty}\frac{\left| \left\{ k=1,\ldots,n \, : \, \lambda_k \left(T^{(n)}\right) \leq t \right\} \right|}{n} = \tilde{\omega}^{-1}(t).
\end{equation}
In particular, let $k=k(n)$ be such that $k(n)/n \sim x$ as $n\to \infty$ for a fixed $x \in [0,1]$ and $\lambda_{k(n)}\left(T^{(n)}\right)\in \tilde{\omega}\left([0,1]\right)=R_\omega$ definitely. Then
\begin{align}
&\lambda_{k(n)}\left(T^{(n)}\right) \sim \tilde{\omega}\left(\frac{k(n)}{n}\right) \qquad \mbox{as } n\to \infty,\label{eq:discrete_Weyl_law2_1}\\
&\left(\frac{k(n)}{n}, \lambda_{k(n)}\left(T^{(n)}\right)  \right) \to \left(x, \tilde{\omega}(x)\right) \qquad \mbox{as } n\to \infty. \label{eq:discrete_Weyl_law2_2}
\end{align}
\end{theorem}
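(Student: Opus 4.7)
My strategy is to first derive (\ref{eq:discrete_Weyl_law1}) as a consequence of the spectral distribution $\{T^{(n)}\}_n\sim_\lambda\tilde\omega$ (which holds because the rearrangement shares the pushforward measure of $\omega$, i.e.\ $\tilde\omega$ is equimeasurable with $\omega$, see (\ref{eq:weyl-SLLN})), and then to deduce the pointwise asymptotics (\ref{eq:discrete_Weyl_law2_1})--(\ref{eq:discrete_Weyl_law2_2}) from (\ref{eq:discrete_Weyl_law1}) by a sandwich argument exploiting the monotonicity of the eigenvalue ordering.

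For (\ref{eq:discrete_Weyl_law1}), I would fix $t\in\R$ and sandwich the indicator $\chi_{(-\infty,t]}$ between two continuous, compactly supported functions $F_\epsilon^-\le\chi_{(-\infty,t]}\le F_\epsilon^+$ that coincide with $\chi_{(-\infty,t]}$ outside the window $(t-\epsilon,t+\epsilon)$. Applying (\ref{def_asym-bis-Matrix}) to $F_\epsilon^\pm$ with the symbol $\tilde\omega$ on $D=[0,1]$ yields
$$
\int_0^1 F_\epsilon^-(\tilde\omega(x))\,dx \;\le\; \liminf_{n\to\infty}\frac{|\{k:\lambda_k(T^{(n)})\le t\}|}{n}\;\le\;\limsup_{n\to\infty}\frac{|\{k:\lambda_k(T^{(n)})\le t\}|}{n}\;\le\;\int_0^1 F_\epsilon^+(\tilde\omega(x))\,dx.
$$
Letting $\epsilon\downarrow 0$, dominated convergence collapses both sides to $\mu_1\bigl(\{x\in[0,1]:\tilde\omega(x)\le t\}\bigr)$; by monotonicity of $\tilde\omega$ this set is an interval of the form $[0,x_t]$ or $[0,x_t)$, whose length is precisely the pseudo-inverse $\tilde\omega^{-1}(t)=\sup\{x\in[0,1]:\tilde\omega(x)\le t\}$. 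The piecewise Lipschitz hypothesis on $\tilde\omega$ confines the points of possible discrepancy (jumps of $\tilde\omega^{-1}$) to a finite set of breakpoints, where the same sandwich still supplies the one-sided limits.

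For the pointwise statements, I would fix $x\in(0,1)$ at which $\tilde\omega$ is continuous and locally strictly increasing; by piecewise Lipschitz continuity only finitely many $x$ need to be excluded. Given $\epsilon>0$, continuity and local strict monotonicity furnish $\delta>0$ with $\tilde\omega(x-\delta)<\tilde\omega(x)<\tilde\omega(x+\delta)$ and $|\tilde\omega(x\pm\delta)-\tilde\omega(x)|<\epsilon$. Setting $t^\pm:=\tilde\omega(x\pm\delta)$ and invoking (\ref{eq:discrete_Weyl_law1}) at $t^\pm$ gives counting fractions converging to $x\pm\delta$. Since $k(n)/n\to x$, for all $n$ large we have
$$
\frac{|\{k:\lambda_k(T^{(n)})\le t^-\}|}{n}<\frac{k(n)}{n}<\frac{|\{k:\lambda_k(T^{(n)})\le t^+\}|}{n},
$$
which, by the non-decreasing ordering of the eigenvalues, forces $t^-<\lambda_{k(n)}(T^{(n)})\le t^+$, and hence $|\lambda_{k(n)}(T^{(n)})-\tilde\omega(x)|<\epsilon$. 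This delivers (\ref{eq:discrete_Weyl_law2_2}); relation (\ref{eq:discrete_Weyl_law2_1}) then follows since $\tilde\omega(k(n)/n)\to\tilde\omega(x)$ as well.

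The main obstacle is the handling of jumps of $\tilde\omega$ (which correspond to gaps in the essential range $R_\omega$) and of flat portions (which correspond to atoms in the pushforward measure of $\omega$). Eigenvalues drifting into a gap of $R_\omega$ are excluded precisely by the hypothesis $\lambda_{k(n)}(T^{(n)})\in\tilde\omega([0,1])=R_\omega$ definitely, while flat portions must be addressed by using the appropriate one-sided value of $\tilde\omega^{-1}$ in the sandwich; the piecewise Lipschitz assumption guarantees that these exceptional configurations are concentrated on a finite set of breakpoints, so the argument above applies on the complementary set of full $\mu_1$-measure, which is all that is needed for the asymptotic conclusions.
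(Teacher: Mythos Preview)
Your proof is correct and takes a more elementary, self-contained route than the paper's. The paper rewrites $\int_0^1 F(\tilde\omega(x))\,dx$ as a Stieltjes integral against the pushforward measure $d\tilde\omega$ and then cites the equidistribution machinery of \cite[Theorem~7.1]{Kuipers-Niederreiter}, together with the weak clustering of Theorem~\ref{thm:clustering&spectral_attraction}, to obtain (\ref{eq:discrete_Weyl_law1}); for (\ref{eq:discrete_Weyl_law2_1}) it simply substitutes $t=\lambda_{k(n)}(T^{(n)})$ into (\ref{eq:discrete_Weyl_law1}) and applies $\tilde\omega$ to both sides. You instead run a direct mollification argument for the counting limit and a two-sided sandwich on $t^\pm=\tilde\omega(x\pm\delta)$ for the pointwise asymptotics. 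Your approach avoids the external reference and is in fact more explicit for (\ref{eq:discrete_Weyl_law2_1})--(\ref{eq:discrete_Weyl_law2_2}), where the paper's substitution step tacitly requires some uniformity in $t$ of the limit (\ref{eq:discrete_Weyl_law1}) that your sandwich makes visible. One small technical point: since Definition~\ref{def:ss_def} requires $F\in C_c(\mathbb{C})$, no compactly supported $F_\epsilon^+$ can dominate $\chi_{(-\infty,t]}$ on all of $\R$; you should truncate $F_\epsilon^\pm$ well below $\min R_\omega$ and absorb the eigenvalues falling outside the support into an $o(1)$ term via the weak clustering at $R_\omega$ (Theorem~\ref{thm:clustering&spectral_attraction})---which is exactly the same ingredient the paper invokes.
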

\begin{proof}
By our assumptions, $\tilde{\omega}\left([0,1]\right) = R_\omega=\bigcup_{j=1}^m [a_j,b_j]$ such that $a_1=\min R_\omega$, $b_m=\max R_\omega$, and since $\tilde{\omega}$ is rectifiable then it induces a positive Borel measure on $\tilde{\omega}\left([0,1]\right)$, which we call $d\tilde{\omega}(t)$. If the subsets $[a_j,b_j]$ are disjoint, then $\tilde{\omega}^{-1}$ can be extended on all over $[\min R_\omega,\max R_\omega]$ by defining $\tilde{\omega}^{-1}(t) = \tilde{\omega}^{-1}(a_j)$ for every $t\in [b_{j-1},a_{j})$. The same applies to $d\tilde{\omega}(t)$, which can be extended to a positive measure defined on $[\min R_\omega,\max R_\omega]$, which we keep calling $d\tilde{\omega}(t)$, such that $d\tilde{\omega}\left((b_{j-1},a_{j})\right)=0$. It holds that
\begin{equation*}
\int_0^1 F\left(\tilde{\omega}(x)\right) d\mu_1(x) = \int_{\min R_\omega}^{\max R_\omega} F(t) d\tilde{\omega}(t), \qquad \forall F \in C([\min R_\omega,\max R_\omega]).
\end{equation*}
Then equation \eqref{eq:discrete_Weyl_law1} follows immediately from \cite[Theorem 7.1]{Kuipers-Niederreiter} and the definition of asymptotic distribution function mod $1$, \cite[Definition 7.1]{Kuipers-Niederreiter}, and recalling that $\left\{ T^{(n)}\right\}_n$ is weakly clustered at $R_\omega$ by Theorem \ref{thm:clustering&spectral_attraction}. Equation \eqref{eq:discrete_Weyl_law2_1} follows instead from \eqref{eq:discrete_Weyl_law1} and again by Theorem \ref{thm:clustering&spectral_attraction}, taking $t= \lambda_{k(n)}\left(T^{(n)}\right)$ and applying $\tilde{\omega}$ to both sides. 
\end{proof}

\begin{corollary}\label{cor:discrete_Weyl_law}
In the same hypothesis of Theorem \ref{thm:discrete_Weyl_law} and in presence of no outliers as in Definition \ref{def:outliers}, then the absolute error between a uniform sampling of $\tilde{\omega}$ and the eigenvalues of $T^{(n)}$ converges to zero, namely
$$
\left\|\lambda_{k}\left(T^{(n)}\right) - \tilde{\omega}\left(\frac{k}{n+1}\right)\right\|_\infty := \max_{k=1,\ldots,n} \left\{\left|\lambda_{k}\left(T^{(n)}\right) - \tilde{\omega}\left(\frac{k}{n+1}\right)\right|\right\} \to 0 \qquad \mbox{as } n\to \infty.
$$
\end{corollary}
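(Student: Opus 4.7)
The plan is to reduce the desired uniform bound to the pointwise asymptotic \eqref{eq:discrete_Weyl_law2_2} of Theorem \ref{thm:discrete_Weyl_law} via a classical compactness-and-contradiction argument. I would assume, toward contradiction, that $\|\lambda_k(T^{(n)}) - \tilde{\omega}(k/(n+1))\|_\infty$ does not tend to zero. Then there exist $\varepsilon_0 > 0$, a sequence $n_j \to \infty$, and indices $k_j \in \{1,\ldots,n_j\}$ with
\begin{equation*}
\left|\lambda_{k_j}\!\left(T^{(n_j)}\right) - \tilde{\omega}\!\left(\tfrac{k_j}{n_j+1}\right)\right| \geq \varepsilon_0 \qquad \text{for all } j.
\end{equation*}

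Since $\{k_j/(n_j+1)\}_j \subset [0,1]$, by Bolzano--Weierstrass I may pass to a subsequence (still indexed by $j$) along which $k_j/(n_j+1) \to x_\infty \in [0,1]$; this automatically forces $k_j/n_j \to x_\infty$ as well. The no-outlier hypothesis ensures that $\lambda_{k_j}(T^{(n_j)}) \in R_\omega$ for every $j$, so the assumptions of \eqref{eq:discrete_Weyl_law2_2} are met with $x = x_\infty$ and I obtain $\lambda_{k_j}(T^{(n_j)}) \to \tilde{\omega}(x_\infty)$. Piecewise Lipschitz continuity of $\tilde{\omega}$ simultaneously yields $\tilde{\omega}(k_j/(n_j+1)) \to \tilde{\omega}(x_\infty)$, and subtracting produces the desired contradiction.

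The main obstacle is the possible lack of continuity of $\tilde{\omega}$ at the finitely many endpoints of the Lipschitz partition: the rearrangement defined by \eqref{eq:rearrangment} is only guaranteed right-continuous, and at a jump point $x_\infty$ the open interval $(\tilde{\omega}(x_\infty^-), \tilde{\omega}(x_\infty))$ is avoided by $R_\omega$. To handle this rigorously I would split the bad subsequence according as $k_j/(n_j+1)$ approaches $x_\infty$ from the left, from the right, or equals $x_\infty$ eventually. In each case the no-outlier assumption pins any limit of $\lambda_{k_j}(T^{(n_j)})$ onto the correct one-sided branch of $\tilde{\omega}$, which matches the corresponding limit of $\tilde{\omega}(k_j/(n_j+1))$ by monotonicity, and the contradiction is recovered.
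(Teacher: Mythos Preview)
Your proof is correct and follows essentially the same compactness-and-contradiction argument as the paper's own proof: assume the sup-norm does not vanish, extract a convergent subsequence of the normalized indices via Bolzano--Weierstrass, and contradict the pointwise limit \eqref{eq:discrete_Weyl_law2_1}--\eqref{eq:discrete_Weyl_law2_2}. Your treatment is in fact more careful than the paper's, which silently passes from $\tilde\omega(k(n)/(n+1))$ to $\tilde\omega(x)$ without addressing the jump-point subtlety you handle explicitly.
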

\begin{proof}
Let us suppose that there exists a sequence $\left\{k(n) \right\}_n$ such that $\left|\lambda_{k(n)}\left(T^{(n)}\right) - \tilde{\omega}\left(\frac{k(n)}{n+1}\right)\right|\geq c>0$ for every $n$. Since $\left\{\frac{k(n)}{n}\right\}\subset [0,1]$ is bounded then there exists a convergent subsequence, which we keep calling $k(n)$, such that $k(n)/n \to x \in [0,1]$. Therefore $\left|\lambda_{k(n)}\left(T^{(n)}\right) - \tilde{\omega}\left(x\right)\right|\geq c>0$, which contradicts \eqref{eq:discrete_Weyl_law2_1}.
\end{proof}

In some sense, $\tilde{\omega}$ can be intended as the inverse cumulative distribution function of the eigenvalue distribution of $\left\{T^{(n)}\right\}_n$, and the limit relation \eqref{eq:weyl-SLLN} as the strong law for large numbers for specially chosen sequences of dependent complex-valued random variables. See \cite{LL18} as a recent survey about equidistributions from a probabilistic point of view.

\section{Application to the Euler-Cauchy differential operator}\label{sec:example}
We are now in position to begin our analysis with respect to a toy-model example. The main tasks of this section are:
\begin{itemize}
	\item to disprove that, in general, a uniform sampling of the spectral symbol $\omega(\bx)$ can provide an accurate approximation of the eigenvalues of the weighted and un-weighted discrete operators $\hat{\mathcal{L}}^{(n)}$ and $\mathcal{L}^{(n)}$, respectively;
	\item to show numerical evidences of Theorem \ref{thm:necessary_cond_for_uniformity}, i.e., that the spectral symbol $\omega(\bx)$ measures how far a matrix discretization method is from a uniform approximation of the spectrum of the continuous operator $\mathcal{L}$, in the sense of the relative error. 
	\item to show that a matrix discretization method, if paired with a suitable (non-uniform) grid can produce a uniform relative approximation of $\mathcal{L}$, as the order of approximation of the method increases.  
\end{itemize}

Let us fix $\alpha>0$ and let us consider the following SLP with Dirichlet BCs,
\begin{equation}\label{eq:Euler-Cauchy}
\begin{cases}-\partial_x\left(\alpha x^2\partial_xu(x)\right) =\lambda u(x), & x\in (1,\textrm{e}^{\sqrt{\alpha}}),\\
u(1)=u(\textrm{e}^{\sqrt{\alpha}})=0,
\end{cases}
\end{equation}
that is \eqref{eq:S-L_general_eig_prob2} with $p(x)=\alpha x^2$, $q(x)\equiv 0$, $(a,b)=(1,\textrm{e}^{\sqrt{\alpha}})$ and $\sigma_1=\zeta_1=1$,$\sigma_2=\zeta_2=0$.        
It is an Euler-Cauchy differential equation and by means of the transformation \eqref{eq:liouville_transform}, i.e., $y(x)= \int_1^x \left(\sqrt{\alpha}t\right)^{-1}dt$, Problem \eqref{eq:Euler-Cauchy} is (spectrally) equivalent to 
\begin{equation}\label{eq:Euler-Cauchy_normal_form}
\begin{cases} 
-\Delta_{\textnormal{dir}}v(y) - \frac{\alpha}{4}v(y) = \lambda v(y) & y\in (0,1),\\
v(0)=v(1)=0,
\end{cases}
\end{equation} 
which is the normal form of a Sturm-Liouville equation, with constant reaction term (or potential) $V(y)\equiv -\alpha/4$. If we write
\begin{equation}\label{eq:Eulerc}
\Eulerc(\cdot) = -\partial_x\left(\alpha x^2 \partial_x(\cdot)\right)_{|\left\{ u \in C^2([1,\textrm{e}^{\sqrt{\alpha}}])\, : \,  u(a)=u(b)=0\right\}}
\end{equation}
for the self-adjoint operator in Equation \eqref{eq:Euler-Cauchy} and 
$$-\Delta_{\textnormal{dir},\alpha}(\cdot) = -\Delta_{\textnormal{dir}}(\cdot) - \frac{\alpha}{4}(\cdot)$$ 
for the self-adjoint operator in the form of Equation \eqref{eq:Euler-Cauchy_normal_form}, then it is clear that
$$
\lambda_k \left(\Eulerc\right) = \lambda_k\left(-\prescript{1}{0}{\Delta_{\textnormal{dir},\alpha}} \right) = \lambda_k\left(-\prescript{1}{0}{\Delta_{\textnormal{dir}}} \right) +\frac{\alpha}{4} = k^2 \pi^2 +\frac{\alpha}{4}\quad \mbox{for every } k\geq 1.
$$    
For later reference, notice that
\begin{equation}\label{eq:shift}
\lim_{\alpha \to 0} \lambda_k \left(\Eulerc \right) = \lambda_k\left(-\prescript{1}{0}{\Delta_{\textnormal{dir}}}  \right) = k^2 \pi^2 \quad \mbox{for every } k\geq 1,
\end{equation}
namely, the diffusion coefficient $p(x)=\alpha x^2$ produces a constant shift of $\alpha/4$ to the eigenvalues of the unperturbed Laplacian operator with Dirichlet BCs, i.e., $-\Delta_{\textnormal{dir}}$.

We introduce the following definition.

\begin{definition}[Numerical and analytic relative error]\label{def:num_anal_error}
	Let $\Euler$ be the discrete differential operator obtained from \eqref{eq:Eulerc} by means of a generic numerical discretization method, and suppose that 
	$$
	\left\{(n+1)^{-2}\Euler\right\}_n \sim_{\lambda} \prescript{}{\alpha}{\omega}(x,\theta), \qquad (x,\theta) \in [1,\textrm{e}^{\sqrt{\alpha}}]\times [0,\pi].
	$$
	Fix $n,n',r \in \N$, with $n'>>n$ and $r=r(n)\geq n$, and compute the following quantities
	$$
	\numerr = \left| \frac{\lambda_k\left(\Euler\right)}{\lambda_k\left(\prescript{\textrm{e}^{\sqrt{\alpha}}}{1}{\mathcal{L}_{\textnormal{dir},\alpha x^2}^{(n')}}\right)} -1\right|, \qquad 
	\analerr = \left| \frac{(n+1)^2\prescript{}{\alpha}{\tilde{\omega}_{r,k}^{(n)}}}{\lambda_k\left(\prescript{\textrm{e}^{\sqrt{\alpha}}}{1}{\mathcal{L}_{\textnormal{dir},\alpha x^2}^{(n')}}\right)} -1 \right|\qquad \mbox{for }k=1,\cdots, n,
	$$
	where $\prescript{}{\alpha}{\tilde{\omega}_{r,k}^{(n)}}= \prescript{}{\alpha}{\tilde{\omega}_{r}}\left(\frac{k}{n+1}\right)$ and $\prescript{}{\alpha}{\tilde{\omega}_{r}}$ is the (approximated) monotone rearrangement of the spectral symbol $\prescript{}{\alpha}{\omega}$ obtained by the procedure described in Algorithm~\ref{alg:omega}. We call $\numerr$ the {\em numerical relative error} and $\analerr$ the {\em analytic relative error}. We say that $\prescript{}{\alpha}{\omega}$ \emph{spectrally approximates} the discrete differential operator $\Euler$ if 
	$$
	\lim_{n \to \infty}\; \left(\analerr\right) =0 \qquad \mbox{for every fixed } k.
	$$
\end{definition}

\subsection{Approximation by $3$-points central FD method on uniform grid}\label{ssec:example_uniform_3_points}

In our example, if we apply the standard central $3$-point FD scheme as in \eqref{eq:FD_symbol} with $\eta=1$ and $\tau(x)=x$, then the weighted discretization matrix $\Euler$ of Problem \eqref{eq:Euler-Cauchy} has spectral symbol 
$$
\prescript{}{\alpha}{\omega(x,\theta)}= \frac{\alpha x^2}{\left(\textrm{e}^{\sqrt{\alpha}}-1\right)^2}\left(2-2\cos(\theta)\right), \qquad \mbox{with}\quad D=[1,\textrm{e}^{\sqrt{\alpha}}]\times [0,\pi].
$$

Working with this toy-model problem in the $3$-points central FD scheme provides us a further advantage, since we can calculate the exact monotone rearrangement $\prescript{}{\alpha}{\tilde{\omega}}$, or at least a finer approximation than $\prescript{}{\alpha}{\tilde{\omega}_r}$  which does not depend on the extra parameter $r$ and which is less computationally expensive. Indeed, from equation \eqref{eq:rearrangment2} we have that
\begin{equation}\label{eq:phi_alpha}
\prescript{}{\alpha}{\phi} : \left[0,\frac{4\alpha\textrm{e}^{2\sqrt{\alpha}}}{\left(\textrm{e}^{\sqrt{\alpha}}-1\right)^2}\right] \to \left[0,1\right], \qquad 
\end{equation}
where
\begin{align*}
\prescript{}{\alpha}{\phi}(t)  &= \frac{1}{\pi\left(\textrm{e}^{\sqrt{\alpha}}-1\right)} \mu_2 \left\{(x,\theta) \in [1,\textrm{e}^{\sqrt{\alpha}}]\times [0,\pi] \, : \, \frac{\alpha x^2\left(2-2\cos(\theta)\right)}{\left(\textrm{e}^{\sqrt{\alpha}}-1\right)^2} \leq t  \right\} \\
&= \frac{1}{\pi\left(\textrm{e}^{\sqrt{\alpha}}-1\right)}\cdot \begin{cases}
\int_1^{\textrm{e}^{\sqrt{\alpha}}} 2\arcsin\left( \frac{\left(\textrm{e}^{\sqrt{\alpha}}-1\right)\sqrt{t}}{2\sqrt{\alpha}x}\right) \, d\mu_1(x) & \mbox{if } t \in \left[0, \frac{4\alpha}{\left(\textrm{e}^{\sqrt{\alpha}}-1\right)^2}\right],\nonumber\\
\pi\left(\frac{\left(\textrm{e}^{\sqrt{\alpha}}-1\right)\sqrt{t}}{2\sqrt{\alpha}}-1\right) + \int_{\frac{\left(\textrm{e}^{\sqrt{\alpha}}-1\right)\sqrt{t}}{2\sqrt{\alpha}}}^{\textrm{e}^{\sqrt{\alpha}}} 2\arcsin\left( \frac{\left(\textrm{e}^{\sqrt{\alpha}}-1\right)\sqrt{t}}{2\sqrt{\alpha}x}\right) \, d\mu_1(x) & \mbox{if } t \in \left[\frac{4\alpha}{\left(\textrm{e}^{\sqrt{\alpha}}-1\right)^2}, \frac{4\alpha\textrm{e}^{2\sqrt{\alpha}}}{\left(\textrm{e}^{\sqrt{\alpha}}-1\right)^2}\right]
\end{cases}\nonumber\\
&= \frac{1}{\pi\left(\textrm{e}^{\sqrt{\alpha}}-1\right)}\cdot \begin{cases}
\Phi\left(t,\textrm{e}^{\sqrt{\alpha}}\right) - \Phi\left(t,1\right)& \mbox{if } t \in \left[0, \frac{4\alpha}{\left(\textrm{e}^{\sqrt{\alpha}}-1\right)^2}\right],\nonumber\\
\pi\left(\frac{\left(\textrm{e}^{\sqrt{\alpha}}-1\right)\sqrt{t}}{2\sqrt{\alpha}}-1\right) + 
\left[ \Phi\left(t,\textrm{e}^{\sqrt{\alpha}}\right) - \Phi\left(t,\frac{\left(\textrm{e}^{\sqrt{\alpha}}-1\right)\sqrt{t}}{2\sqrt{\alpha}}\right)  \right]& \mbox{if } t \in \left[\frac{4\alpha}{\left(\textrm{e}^{\sqrt{\alpha}}-1\right)^2}, \frac{4\alpha\textrm{e}^{2\sqrt{\alpha}}}{\left(\textrm{e}^{\sqrt{\alpha}}-1\right)^2}\right],
\end{cases}\nonumber
\end{align*}
and where
\begin{equation*}
\Phi(t,x) = \frac{\left(\textrm{e}^{\sqrt{\alpha}}-1\right)\sqrt{t}\log\left(2\alpha x\left(\sqrt{1-\frac{\left(\textrm{e}^{\sqrt{\alpha}}-1\right)^2}{4\alpha x^2}}\right) +1\right)}{\sqrt{\alpha}} + 2x\arcsin\left( \frac{\left(\textrm{e}^{\sqrt{\alpha}}-1\right)\sqrt{t}}{2\sqrt{\alpha}x}\right).
\end{equation*}

 Clearly, $\prescript{}{\alpha}{\tilde{\omega}} = \prescript{}{\alpha}{\phi}^{-1}$, and having an analytic expression for $\prescript{}{\alpha}{\phi}(t)$, it is then possible to compute a numerical approximation of its inverse $\prescript{}{\alpha}{\phi}^{-1} : [0,1] \to \left[0,\frac{4\alpha\textrm{e}^{2\sqrt{\alpha}}}{\left(\textrm{e}^{\sqrt{\alpha}}-1\right)^2}\right]$ over the uniform grid $\left\{ \frac{k}{n+1} \right\}_{k=1}^n$, for example by means of a Newton method. This approximation of $\prescript{}{\alpha}{\phi}^{-1}$ does not depend on the extra parameter $r$ and with abuse of notation we will call it $\prescript{}{\alpha}{\tilde{\omega}}$. Henceforth, we will work with both $\prescript{}{\alpha}{\tilde{\omega}_r}$ and $\prescript{}{\alpha}{\tilde{\omega}}$, and when we will compute the analytical relative error with respect to $\prescript{}{\alpha}{\tilde{\omega}}$, we will write $\analerrExact$ without the subscript $r$.

\begin{center}
	\begin{figure}[ht]
	\includegraphics[width=15cm]{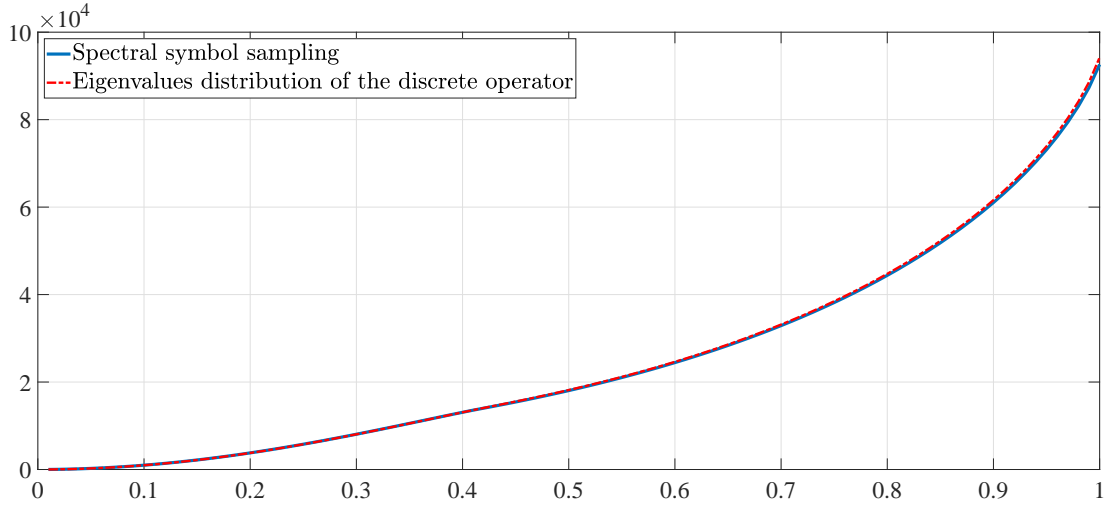}
	\captionof{figure}{For $\alpha=1$, comparison between the distribution of the first $n=10^2$ eigenvalues of the discrete operator $\Euler$ (red-dotted line) and the $n$-equispaced samples of $(n+1)^2\prescript{}{\alpha}{\tilde{\omega}_r}$ with $r=10^3$ (blue-continuous line). On the $x$-axis is reported the quotient $k/n$, for $k=1,\ldots,n$. The sovrapposition of the graphs is explained by Theorem \ref{thm:discrete_Weyl_law} and the limits \eqref{convergence_to_ICDF}, \eqref{convergence_to_ICDF2}.}\label{fig:eig_symbol_comparison}
	\end{figure}
\end{center}
Finally, let us begin our analysis. In the above Figure \ref{fig:eig_symbol_comparison} it is possible to check how an equispaced sampling of $(n+1)^2\prescript{}{\alpha}{\tilde{\omega}_r}$ seems to distribute exactly as the eigenvalues of the unweighted discrete operator $\Euler$. 

Moreover, according with equation \eqref{eq:shift}, we observe that 
\begin{equation*}
\lim_{\alpha \to 0} \prescript{}{\alpha}{\phi(t)} = \frac{2}{\pi}\arcsin\left(\frac{\sqrt{t}}{2}\right) \qquad t\in [0,4],
\end{equation*}
and so
\begin{equation*}
\lim_{\alpha \to 0}\prescript{}{\alpha}{\tilde{\omega}} (\theta) = 4\sin^2\left(\frac{\pi x}{2}\right) \qquad \theta \in [0,1],
\end{equation*}
which means that the  monotone rearrangement $\prescript{}{\alpha}{\tilde{\omega}}$ converges to the spectral symbol $\omega$ as $\alpha \to 0$, with
$$
\omega(\theta) = 4\sin^2\left(\frac{\pi \theta}{2}\right)= 2 - 2\cos\left(\theta\pi\right), 
$$
that is the spectral symbol which characterizes the differential operator $-\prescript{1}{0}{\Delta_{\textnormal{dir}}}$ discretized by means of a 3-points FD scheme.  In Figure \ref{fig:symbols_distribution_comparison} and Table \ref{table:uniform_convergence_symbol_alpha} it is visually and numerically summarized this last observation. The eigenvalues of $(n+1)^{-2}\left(-\prescript{1}{0}{\Delta_{\textnormal{dir}}^{(n)}}\right)$ are the exact sampling of $\omega(\theta)=2-2\cos(\theta\pi)$ over the uniform grid $\left\{ \frac{k}{n+1} \right\}_{k=1}^n$, see \cite[p. 154]{Smith85}.

All these remarks would suggest that $\prescript{}{\alpha}{\tilde{\omega}}$, or equivalently $\prescript{}{\alpha}{\tilde{\omega}_r}$, spectrally approximates the weighted discrete operator $\Eulerw$.  

\begin{center}
	\begin{figure}[ht]
	\includegraphics[width=15cm]{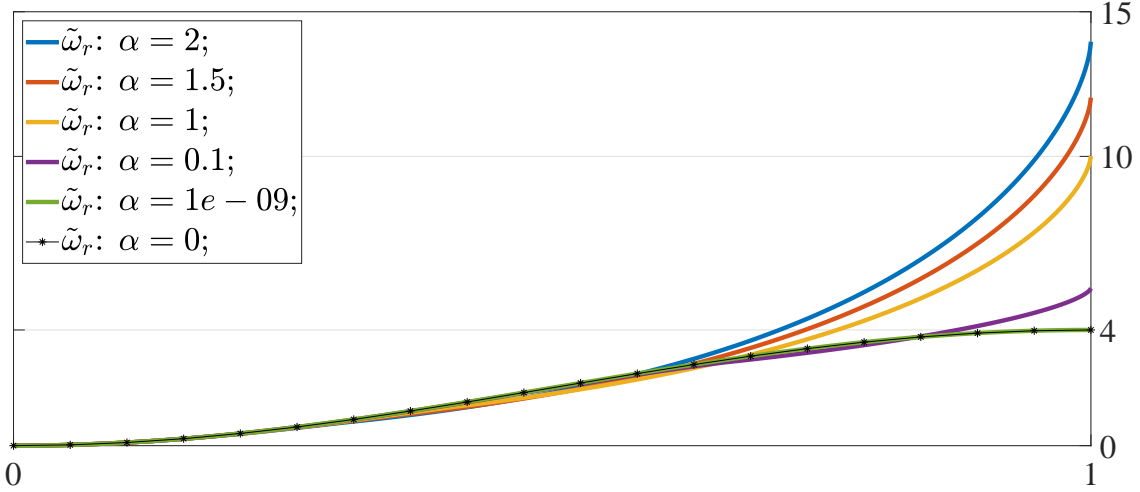}
	\captionof{figure}{Graphs of the monotone rearrangment of the spectral symbols $\prescript{}{\alpha}{\tilde{\omega}_r}$ for different values of $\alpha$ and fixed $n=10^2, r=10^3$. In a black-line with star-shaped dots it is drawn the spectral symbol $\omega(\theta)=2-2\cos(\theta\pi)$, which is the spectral symbol for the limit case $\alpha=0$.}\label{fig:symbols_distribution_comparison}
	\end{figure}
\end{center}

\begin{table}[ht]
	\centering
	\begin{tabular}{c|c|c|c|c|} 
		\cline{2-5}
		& $\alpha=1$         & $\alpha=10e-02$    & $\alpha=10e-05$    & $\alpha=10e-10$     \\ 
		\hline
		\multicolumn{1}{|l|}{$\left\|\prescript{}{\alpha}{\tilde{\omega}} - \omega\right\|_\infty$} & 5.8049  & 0.3912 &  0.0103&  4.2392e-06 \\
		\hline
	\end{tabular}\captionof{table}{for $\alpha \to 0$, we confront the sup-norm of the difference between the monotone rearrangment $\prescript{}{\alpha}{\tilde{\omega}}$ and the spectral symbol $\omega(\theta)=2-2\cos(\theta\pi)$, which characterizes the 3-point FD scheme discretization of the $1d$ Dirichlet Laplacian over $[0,1]$. The pointwise evaluation of $\prescript{}{\alpha}{\tilde{\omega}}$ and $\omega$ is made over the uniform grid $\left\{\frac{k}{n+1}\right\}_{k=1}^n$ with $n=10^3$, whereas the approximation of $\prescript{}{\alpha}{\tilde{\omega}}$ is obtained evaluating $\prescript{}{\alpha}{\phi^{-1}}$ from \eqref{eq:phi_alpha} by means of the \texttt{fzero()} function from \textsc{Matlab} r2018b.}\label{table:uniform_convergence_symbol_alpha}
\end{table}

Unfortunately, this conjecture looks to be partially proven wrong by Figure \ref{fig:analytical_err_1}. There it is shown the comparison between the graphs of the numerical relative error $\numerr$ and the analytic relative error $\analerr$, for several different increasing values of the parameter $r$. We observe a discrepancy in the analytical prediction of the eigenvalue error $\analerr$, for small $k<<n$, with respect to the numerical relative error $\numerr$. 

\begin{center}
	\begin{figure}[ht]
	\includegraphics[width=15cm]{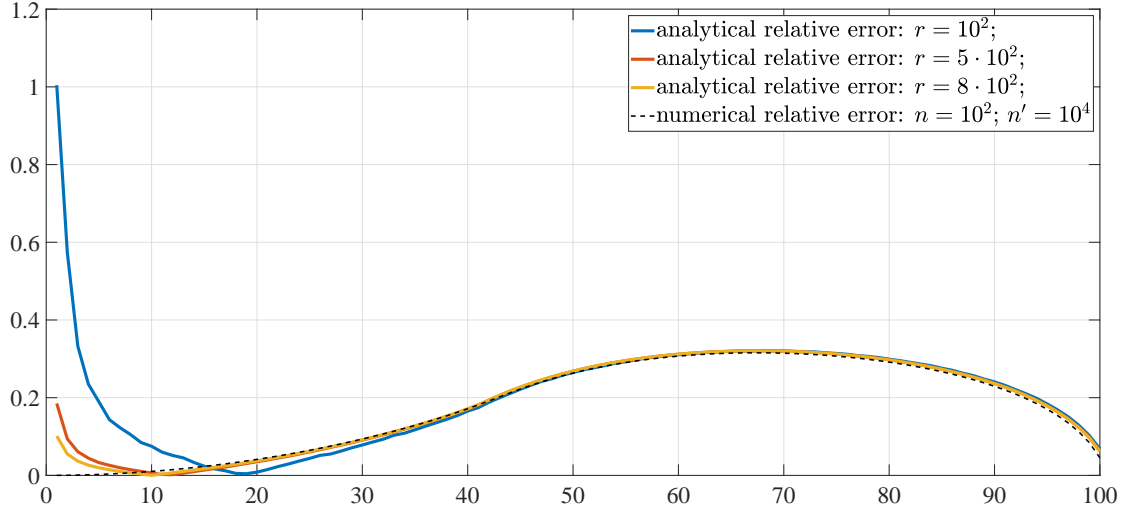}
	\captionof{figure}{Comparison between the numerical relative error $\textbf{err}_k^{(n)}$ and the analytic relative error $\tilde{\textbf{err}}_{k,r}^{(n)}$ for increasing $r=10^2,5\cdot 10^2,8\cdot10^2$. The values of $n$ and $n'$ are fixed at $10^2$ and $10^4$, respectively, and $\alpha=1$. The maximum discrepancy between the numerical relative error and the analytical relative errors is achieved for $k=1$ in all the three cases, and apparently it decreases as $r$ increases.}\label{fig:analytical_err_1}
		\end{figure}
\end{center}

In particular, the maximum discrepancy is achieved at the first eigenvalue approximation $k=1$, for every $r$. The discrepancy apparently decreases as the number of grid points $r$ increases, as well observed in \cite[Figure 48]{GSERSH18} for some test-problems in the setting of Galerkin discretization by linear $C^0$ B-spline. In that same paper, some plausible hypothesis and suggestions were advanced:
\begin{itemize}
	\item the discrepancy could depend on the fact that it has been used $\prescript{}{\alpha}{\tilde{\omega}_r}$ instead of $\prescript{}{\alpha}{\tilde{\omega}}$, and then that discrepancy should tend to zero in the limit $r\to \infty$, since $\prescript{}{\alpha}{\tilde{\omega}_r} \to \prescript{}{\alpha}{\tilde{\omega}}$.
	\item numerical instability of the analytic relative error $\analerr$ for small eigenvalues,\cite[Remark 3.1]{GSERSH18}.
	\item Change the sampling grid into an \textquotedblleft almost uniform\textquotedblright grid: see \cite[Rermark 3.2]{GSERSH18} for details.
\end{itemize}

The problem is that these hypothesis, which stem from numerical observations, cannot be validated: the descent to zero of the observed  discrepancy as $r$ increases is only apparent. Indeed, as we are going to show below, for every fixed $k$ it holds that 
$$
\left|\analerr \right| \to c_{\alpha,k}>0 \quad \mbox{as }r,n \to \infty,
$$
with $c_{\alpha,k}$ independent of $n$ and $n'$. Let us explain here the reason for this mismatch between the numerical relative error $\numerr$ and the analytic relative error $\analerr$. 

First of all, as we already observed in Subsection \ref{ss:rearrangment}, for $r\to \infty$ it holds that $\|\tilde{\omega}_r - \tilde{\omega}\|_\infty \to 0$. Then we can work directly with $\tilde{\omega}$, avoiding us to pass through its approximation $\tilde{\omega}_r$. Moreover, for fixed $n<<n'$ and $n'$ large enough, it holds that
$$
\lambda_k\left(\prescript{\textrm{e}^{\sqrt{\alpha}}}{1}{\mathcal{L}_{\textnormal{dir},\alpha x^2}^{(n')}}\right) \approx k^2\pi^2 +\alpha/4 \qquad \mbox{for } k=1,\cdots n,
$$     
see for example \cite[Lemma 7.1 and Lemma 7.2]{Gary65}. So, from here after we will consider $n'$ large enough such that the above approximation holds. We can then rewrite $\analerr$ as
$$
\analerr \approx \analerrExact = \left| \frac{(n+1)^2\prescript{}{\alpha}{\tilde{\omega}_{k}^{(n)}}}{k^2\pi^2+\frac{\alpha}{4}} -1\right|.
$$ 
From \eqref{eq:phi_alpha}, for $0\leq t\leq \frac{4\alpha}{\left(\textrm{e}^{\sqrt{\alpha}}-1\right)^2}$, it holds that
\begin{align*}
\prescript{}{\alpha}{\phi}(t) &= \frac{1}{\pi\left(\textrm{e}^{\sqrt{\alpha}}-1\right)}\int_1^{\textrm{e}^{\sqrt{\alpha}}} 2\arcsin\left( \frac{\left(\textrm{e}^{\sqrt{\alpha}}-1\right)\sqrt{t}}{2\sqrt{\alpha}x}\right) \, dx\\
&= \frac{\sqrt{t}\left(1 + \Theta(t,\alpha)\right)}{\pi},
\end{align*}
with $\Theta(t,\alpha)$ a monotone increasing function with respect to $t$ and such that $\Theta(t,\alpha) = o(t)$, for every fixed $\alpha$. In particular, for $0<t<<1$ it holds that 
$$
\prescript{}{\alpha}{\phi}(t)\approx \frac{\sqrt{t}}{\pi}.
$$
Therefore, for every $0\leq x<1$ sufficiently small, from \eqref{eq:rearrangment} we have that
$$
\prescript{}{\alpha}{\tilde{\omega}}(x) \approx \left(x\pi\right)^2 
$$  
and then, provided that  $\frac{N}{n}<<1$, for every $k=1,\ldots,N$
\begin{align}
\analerr &\approx  \left|\frac{(n+1)^2\prescript{}{\alpha}{\tilde{\omega}_{k}^{(n)}}}{k^2\pi^2+\frac{\alpha}{4}} -1\right|\nonumber\\
&= \left| \frac{(n+1)^2\prescript{}{\alpha}{\tilde{\omega}}\left(\frac{k}{n+1}\right)}{k^2\pi^2+\frac{\alpha}{4}} -1\right| \label{eq:no_uniform_grid}\\
&\approx \left|\frac{k^2\pi^2}{k^2\pi^2+\frac{\alpha}{4}} -1\right| \nonumber\\
&= 1-\frac{1}{1+\frac{\alpha}{4k^2\pi^2}} >0.\nonumber
\end{align}
If now we let $r=r(n)\to \infty$ as $n\to \infty$, then for every fixed $k\in \N$ it holds that
\begin{equation}\label{eq:c_alpha,k}
\lim_{n \to \infty}\tilde{\textbf{err}}_{k,r(n)}^{(n)} = \frac{\frac{\alpha}{4}}{k^2\pi^2+\frac{\alpha}{4}} = c_{\alpha,k}>0.
\end{equation}
We then have a lower bound for the analytic relative error which can not be avoided by refining the grid points. Of course, as $n\to \infty$, then $c_{\alpha,k} \to 0$ as $k$ increases. This is summarized below by Figure \ref{fig:analytical_error_saturation} and Table \ref{table:table_saturation}.

\begin{center}
	\begin{figure}[ht]
	\includegraphics[width=15cm]{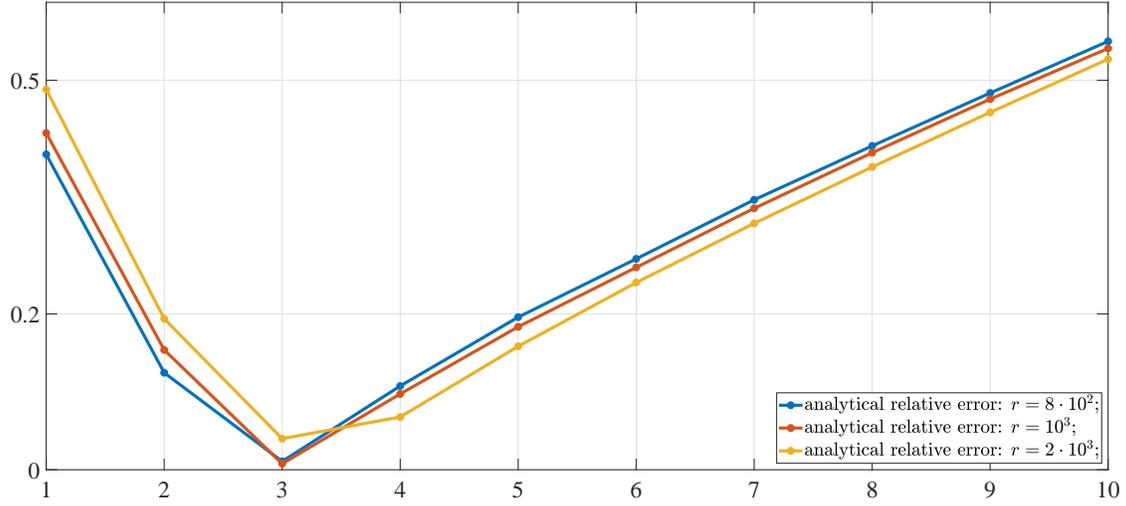}
	\captionof{figure}{Comparison between the analytic relative errors $\analerr$ for differents $r=8\cdot 10^2,10^3,2\cdot 10^3$ with $n=10^2$ and with fixed $\alpha=4\pi^2$. Observe that for $k=1$ and $k=2$ then $c_{4\pi^2,1}=0.5$ and $c_{4\pi^2,2}=0.2$, respectively, with $c_{\alpha,k}$ given in \eqref{eq:c_alpha,k}. As $r$ increases we can see that $\analerr$ tends to $c_{\alpha,k}$ for $k=1,2$.}\label{fig:analytical_error_saturation}
\end{figure}
\end{center}

\begin{table}
	\centering
	\begin{tabular}{|c|l|c|c|c|} 
		\cline{3-5}
		\multicolumn{2}{l|}{}                                                                              & \multicolumn{3}{c|}{$|\tilde{\textbf{err}}^{(n)}_k/c_{\alpha,k} -1|$ }        \\ 
		\cline{3-5}
		\multicolumn{2}{c|}{}   & \multicolumn{1}{l|}{$n=10^2$ } & $n=10^3$             & $n=10^4$              \\ 
		\hline
		\multirow{3}{*}{\rotcell{$\alpha=0.1$ }}                                 & $c_{\alpha,1}=0.0025$                                                                                         & 0.0326                & 3.3223e-04  & 3.3283e-06   \\ 
		\cline{2-5}
		& $c_{\alpha,5}=1.0131e-04$                                                                                     & 20.3811               & 0.2076      & 0.0021       \\ 
		\cline{2-5}
		& $c_{\alpha,10}=2.5330e-05$                                                                                    & 325.3811              & 3.3222      & 0.0333       \\ 
		\hline\hline
		\multirow{3}{*}{\rotcell{$\alpha=1$ }}                                   & $c_{\alpha,1}=0.0247$                                                                                         & 0.0041               & 4.1363e-05  & 4.1438e-07   \\ 
		\cline{2-5}
		& $c_{\alpha,5}=0.0010$                                                                                         & 2.5395                & 0.0259      & 2.5899e-04   \\ 
		\cline{2-5}
		& $c_{\alpha,10}=2.5324e-04$                                                                                    & 40.6422               & 0.4136      & 0.0041       \\ 
		\hline\hline
		\multirow{3}{*}{\rotcell{$\alpha=2$}} & $c_{\alpha,1}=0.0482$                                                                                         & 0.0026                & 2.6120e-05  & 2.6167e-07   \\ 
		\cline{2-5}
		& $c_{\alpha,5}=0.0020$                                                                                         & 1.6056                & 0.0163      & 1.6354e-04   \\ 
		\cline{2-5}
		& $c_{\alpha,10}=5.0635e-04$                                                                                    & 25.7979               & 0.2612      & 0.0026       \\ 
		\hline\hline
		\multirow{3}{*}{\rotcell{$\alpha=5$}} & $c_{\alpha,1}=0.1124$                                                                                         & 0.0020              & 2.0008e-05  & 2.0044e-07   \\ 
		\cline{2-5}
		& $c_{\alpha,5}=0.0050$                                                                                         & 1.2389                &0.0125      & 1.2528e-04   \\ 
		\cline{2-5}
		& $c_{\alpha,10}=0.0013$                                                                                        & 20.4017               &0.2002      & 1.2528e-04   \\
		\hline
	\end{tabular}
\captionof{table}{For every fixed $k$ and $\alpha$, the analytic relative error $\analerrExact$ converges to the lower bound $c_{\alpha,k}$ as $n$ increases, where $c_{\alpha,k}$ is given in \eqref{eq:c_alpha,k}. Observe that $\analerrExact$ seems to be monotone decreasing of order $O(n^{-2})$. The approximation of $\prescript{}{\alpha}{\tilde{\omega}}$ is obtained evaluating $\prescript{}{\alpha}{\phi^{-1}}$ from \eqref{eq:phi_alpha} by means of the \texttt{fzero()} function from \textsc{Matlab} r2018b.}\label{table:table_saturation}
\end{table}

The problem lies on the wrong informal interpretation given to the limit relation in Definition \ref{def:ss_def}, and suggested by Remark \ref{rem:ssymbol_sampling}. Indeed, that limit relation tells us that 
\begin{equation}\label{convergence_to_ICDF}
\left(\frac{k(n)}{n}, \lambda_{k(n)}\left(\Eulerw\right) \right)  \to \left(x, \prescript{}{\alpha}{\tilde{\omega}}(x) \right) \qquad \mbox{as }n\to \infty
\end{equation}
or equivalently
\begin{equation}\label{convergence_to_ICDF2}
\left(\frac{k(n)}{n}, \lambda_{k(n)}\left(\Euler\right) \right)  \to \left(x, (n+1)^2\prescript{}{\alpha}{\tilde{\omega}}(x) \right) \qquad \mbox{as }n\to \infty,
\end{equation}
for every $k(n)$ such that $k(n)/n\to x \in [0,1]$, see Theorem \ref{thm:discrete_Weyl_law}. Therefore, since $\prescript{}{\alpha}{\tilde{\omega}}\in C^1([0,1])$, it follows that 
$$
\prescript{}{\alpha}{\textbf{abs}_{r}^{(n)}}=\|\prescript{}{\alpha}{\tilde{\omega}_{r,k}^{(n)}}-\lambda_k\left(\Eulerw\right)\|_\infty\to 0 \quad \mbox{as } n\to \infty,
$$
as observed for example in \cite[Example 10.2 p. 198]{GS17}. On the contrary, a uniform sampling of the symbol $\omega$ does not necessarily provide an accurate approximation of the eigenvalues of the weighted operator $\Eulerw$, in the sense of the relative error. The uniform sampling of the symbol works perfectly only on certain subclasses of symbol functions $\omega$, but it fails in general. We loose convergence even for the absolute error as soon as we relax the hypothesis on $\omega$, requiring it to be just Lebesgue integrable over $D$, see Subsection \ref{ssec:L1}.

As a last remark, in general there does not exist an \textquotedblleft almost\textquotedblright uniform grid as well, nor in an asymptotic sense as described in \cite[Rermark 3.2]{GSERSH18}. 
Knowing the exact sampling grid $\left\{\tau\left(\frac{j}{n+1}\right)\right\}_{j=1}^n$ which guarantees $\prescript{}{\alpha}{\tilde{\omega}}$ to spectrally approximate the discrete differential operator $\Euler$ is equivalent to know the eigenvalue distribution of the continuous differential operator $\Eulerc$.

What we can say instead is that every $t \in [\min \prescript{}{\alpha}{\omega}, \max \prescript{}{\alpha}{\omega}]$ strongly attracts the spectrum of $\Eulerw$ with infinite order, see Definition \ref{def:spectral_attraction} and Theorem \ref{thm:clustering&spectral_attraction}. In particular, in presence of no outliers, it holds that  
\begin{align}\label{eq:spectral_attraction_FD_3_points}
\lim_{n \to \infty} \lambda_{n}\left(\Eulerw\right) = \max_{[0,1]\times [0,\pi]}\prescript{}{\alpha}{\omega}(x,\theta)
= \prescript{}{\alpha}{\tilde{\omega}}(1)=\frac{4\alpha\textrm{e}^{2\sqrt{\alpha}}}{\pi^2(\textrm{e}^{\sqrt{\alpha}}-1)^2},
\end{align}
and so
$$
\lim_{n \to \infty}\left|\prescript{}{\alpha}{\tilde{\textbf{err}}_{r(n),n}^{(n)}}\right| =0.
$$
In Table \ref{table:table_limit_n} we can see that \eqref{eq:spectral_attraction_FD_3_points} is confirmed.

Therefore, any numerical scheme which is characterized by a spectral symbol of the form $\prescript{}{\alpha}{\omega(x,\theta)}= \frac{p(x)}{(\textrm{e}^{\sqrt{\alpha}}-1)^2}f(\theta)$ such that
\begin{equation}\label{eq:necessary_condition_uniform}
\max_{(x,\theta)\in [1,\textrm{e}^{\sqrt{\alpha}}]\times[0,\pi]} \frac{p(x)}{(\textrm{e}^{\sqrt{\alpha}}-1)^2}f(\theta) \neq \pi^2 = \lim_{n \to \infty} \frac{\lambda_n\left(\Eulerc\right)}{(n+1)^2}
\end{equation} 
will not provide a relative uniform approximation of the eigenvalues  $\lambda_k\left(\Eulerc\right)$ of the continuous differential operator of Problem \ref{eq:Euler-Cauchy}, namely
\begin{equation}\label{eq:no_uniform_approx}
\lim_{n \to \infty}\left\{\max_{k=1,\ldots,n}\left| \frac{\lambda_k\left(\Euler\right)}{\lambda_k\left(\Eulerc\right)}  -1\right|\right\} = c >0,
\end{equation}
see Corollary \ref{cor:necessary_cond_for_uniformity}. This is the case of the method we implemented in this subsection, i.e., the 3-points uniform FD scheme, as it can be easily checked by Table \ref{table:table_limit_n} and Figure \ref{fig:eig_distribution_VS_exact}.

Actually, Theorem \ref{thm:necessary_cond_for_uniformity} says more: it says that if $\prescript{}{\alpha}{\tilde{\omega}}(x)\neq x^2\pi^2$ and in presence of no outliers, it holds that
\begin{equation}\label{eq:limit_relative_error_FD_3_point}
\lim_{n \to \infty}\left\{\max_{k=1,\ldots,n}\left| \frac{\lambda_k\left(\Euler\right)}{\lambda_k\left(\Eulerc\right)}  -1\right|\right\} = \max_{x\in[0,1]} \left| \frac{\prescript{}{\alpha}{\tilde{\omega}}(x)}{x^2\pi^2} -1\right|.
\end{equation}

In Figure \ref{fig:eig_distribution_VS_exact} and Table \ref{table:maximum_rel_error} it is validated numerically \eqref{eq:limit_relative_error_FD_3_point}, i.e., the thesis of Theorem \ref{thm:necessary_cond_for_uniformity} set in this specific case.

\begin{table}[ht]
	\centering
	\begin{tabular}{|l|c|c|c|c|} 
		\cline{3-5}
		\multicolumn{1}{l}{}           &                                                                                               & \multicolumn{1}{c|}{$n=10^2$ } & $n=10^3$  & $n=3\cdot10^3$   \\ 
		\hline
		\multirow{2}{*}{$\alpha=0.5$ } &  $\prescript{}{\alpha}{\tilde{\textbf{err}}_{r,n}^{(n)}}$                                     & 0.0210                         & 0.0036    & 0.0016           \\ 
		\cline{2-5}
		& $|\frac{(n+1)^2\lambda^{(n)}_n/\lambda_n}{\prescript{}{\alpha}{\tilde{\omega}(1)}/\pi^2}-1|$  & 0.0482                         & 0.0128    & 0.0065           \\ 
		\hline\hline
		\multirow{2}{*}{$\alpha=1.2$ } & $\prescript{}{\alpha}{\tilde{\textbf{err}}_{r,n}^{(n)}}$                                      & 0.0249                         & 0.0044    & 0.0020           \\ 
		\cline{2-5}
		& $|\frac{(n+1)^2\lambda^{(n)}_n/\lambda_n}{\prescript{}{\alpha}{\tilde{\omega}(1)}/\pi^2}-1|$  & 0.0615                         & 0.0157    & 0.0079           \\ 
		\hline\hline
		\multirow{2}{*}{$\alpha=4$ }   &  $\prescript{}{\alpha}{\tilde{\textbf{err}}_{r,n}^{(n)}}$                                     & 0.0297                         & 0.0052    & 0.0024           \\ 
		\cline{2-5}
		& $|\frac{(n+1)^2\lambda^{(n)}_n/\lambda_n}{\prescript{}{\alpha}{\tilde{\omega}(1)}/\pi^2}-1|$  & 0.0765                         & 0.0191    & 0.0095           \\
		\hline
	\end{tabular}
	\captionof{table}{In this table it is possible to check that for every fixed $\alpha$, the $n$-th term of the analytic relative error $\prescript{}{\alpha}{\tilde{\textnormal{err}}_{r,n}^{(n)}}$ converges to zero as $n$ increases, and that the quotient between $(n+1)^2\lambda_n\left(\Eulerw\right)$ and $\lambda_n\left(\Eulerc\right)$ converges to $\max_{(x,\theta)\in [1,\textrm{e}^{\sqrt{\alpha}}]\times[0,\pi]} \prescript{}{\alpha}{\omega(x,\theta)}/\pi^2 = \prescript{}{\alpha}{\tilde{\omega}(1)}/\pi^2$. In this case we choose $r=n$ to keep computational costs at minimum.}\label{table:table_limit_n}
\end{table}

\begin{table}\vspace*{-2cm}
	\centering
	\begin{tabular}{|l|c|c|c|c|} 
		\cline{3-5}
		\multicolumn{1}{c}{}           &                                                                                                        & $n=10^2$              & $n=10^3$              & $n=5\cdot10^3$         \\ 
		\hline
		\multirow{2}{*}{$\alpha=0.5$ } & $|\frac{\max|\lambda^{(n)}_k/\lambda_k|}{\max| \prescript{}{\alpha}{\tilde{\omega}}(x)/x^2\pi^2|}-1|$  & 0.0104                & 0.0010                & 2.0853e-04             \\ 
		\cline{2-5}
		& $\bar{k}/n$                                                                                            & 0.7900                & 0.7880                & 0.7878                 \\ 
		\hline\hline
		\multirow{2}{*}{$\alpha=1$ }   & $|\frac{\max|\lambda^{(n)}_k/\lambda_k|}{\max| \prescript{}{\alpha}{\tilde{\omega}}(x)/x^2\pi^2|}-1|$  & 0.0158                & 0.0016                & 3.1754e-04             \\ 
		\cline{2-5}
		& $\bar{k}/n$                                                                                            & 0.6700                & 0.6680                & 0.6676                 \\ 
		\hline\hline
		\multirow{2}{*}{$\alpha=1.2$ } & $|\frac{\max|\lambda^{(n)}_k/\lambda_k|}{\max| \prescript{}{\alpha}{\tilde{\omega}}(x)/x^2\pi^2|}-1|$  & 0.0180                & 0.0018                & 3.6226e-04             \\ 
		\cline{2-5}
		& $\bar{k}/n$                                                                                            & 0.64                  & 0.6310                & 0.6302                 \\ 
		\hline\hline
		\multirow{2}{*}{$\alpha=3$ }   & $|\frac{\max|\lambda^{(n)}_k/\lambda_k|}{\max| \prescript{}{\alpha}{\tilde{\omega}}(x)/x^2\pi^2|}-1|$  & 0.0518                & 0.0097               &0.0032             \\ 
		\cline{2-5}
		& $\bar{k}/n$                                                                                            & 1               & 1              & 1                 \\ 
		\hline
	\end{tabular}\caption{In this table we check numerically the validity of Theorem \ref{thm:necessary_cond_for_uniformity} for different values of $\alpha$ and $n$. It can be seen that for every $\alpha$, as $n$ increases then the relative error between $\max_{k=1,\ldots,n}\left|\lambda_k\left(\Euler\right)/\lambda_k\left(\Eulerc\right)\right|$ and $\max_{x\in[0,1]}\left|\prescript{}{\alpha}{\tilde{\omega}}(x)/x^2\pi^2\right|$ decreases, confirming \eqref{eq:limit_relative_error_FD_3_point}. In the table is reported as well the quotient $\bar{k}/n$, where $\bar{k}$ is the $k$-th eigenvalue which achieves the maximum relative error between $\lambda_k\left(\Euler\right)$ and $\lambda_k\left(\Eulerc\right)$. We can notice that $\bar{k}/n$ is always bounded and it tends to a finite value in $(0,1]$ as $n$ increases. The approximation of $\prescript{}{\alpha}{\tilde{\omega}}$ is obtained evaluating $\prescript{}{\alpha}{\phi^{-1}}$ from \eqref{eq:phi_alpha} by means of the \texttt{fzero()} function from \textsc{Matlab} r2018b.}\label{table:maximum_rel_error}
\end{table}

\begin{center}
	\begin{figure}\vspace{-1cm}
		\includegraphics[width=15cm]{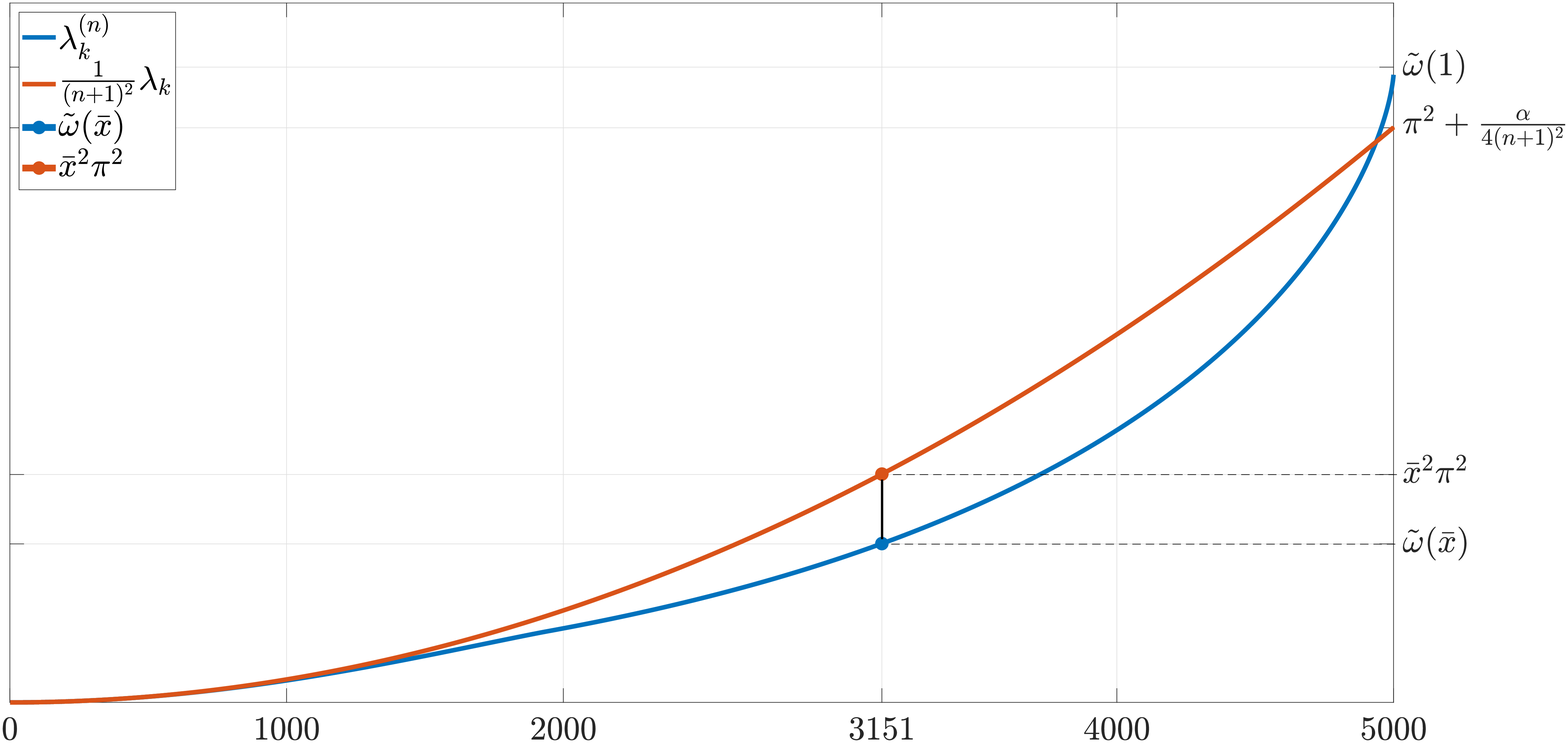}
		\captionof{figure}{For $\alpha=1.2$ and $n=5\cdot10^3$, comparison between the eigenvalues distribution of the weighted discrete differential operator $\Eulerw$ and the exact eigenvalues of the continuous differential operator $\Eulerc$, weighted by $(n+1)^2$. As observed in \eqref{eq:spectral_attraction_FD_3_points}, $\lambda_n^{(n)}\left(\Eulerw\right)\approx \prescript{}{\alpha}{\tilde{\omega}(1)}= \frac{4\alpha\textrm{e}^{2\sqrt{\alpha}}}{(\textrm{e}^{\sqrt{\alpha}}-1)^2}$. Moreover, the maximum relative error \eqref{eq:no_uniform_approx} is obtained for $\bar{k}\approx 3151$, which corresponds to the maximum relative error of $|\prescript{}{\alpha}{\tilde{\omega}}(x)/x^2\pi^2 -1|$ achieved at $\bar{x}\approx 0.6301 \approx \bar{k}/n$.}\label{fig:eig_distribution_VS_exact}
	\end{figure}
\end{center}

\FloatBarrier
\subsection{Discretization by $(2\eta+1)$-points central FD method on non-uniform grid}\label{ssec:FD_nonuniform}
Clearly, everything said in the preceding Subsection \ref{ssec:example_uniform_3_points} remains valid even if we increase the order of accuracy of the FD method, namely, the spectral symbol $\omega_\eta$ of equation \eqref{eq:FD_symbol} does not spectrally approximate the discrete differential operator $\prescript{\textrm{e}^{\sqrt{\alpha}}}{1}{\mathcal{L}_{\textnormal{dir},\alpha \tau(x)^2}^{(n,\eta)}}$, in the sense of the relative error, for any $\eta\geq 1$. See Figure \ref{fig:analytic_num_comparison_eta} in relation with Figure \ref{fig:analytical_err_1}.

\begin{center}
	\begin{figure}[ht]
		\centering
		\subfloat[Uniform central FD with $\eta=4$]{
			\includegraphics[height=33mm]{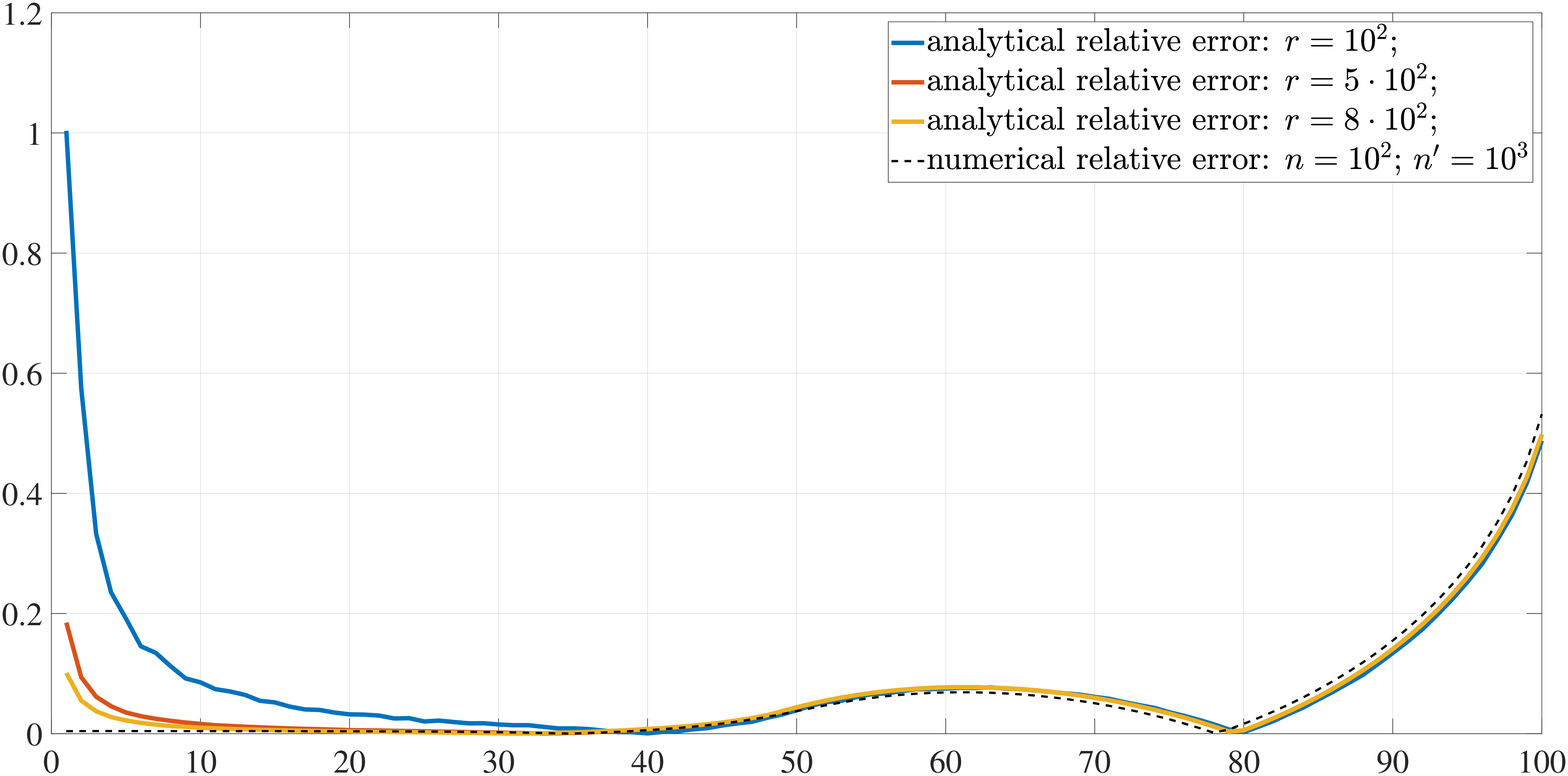}  
			\label{subfig:eta_4}
		}
		\subfloat[Uniform central FD with $\eta=8$]{
			\centering
			\includegraphics[height=33mm]{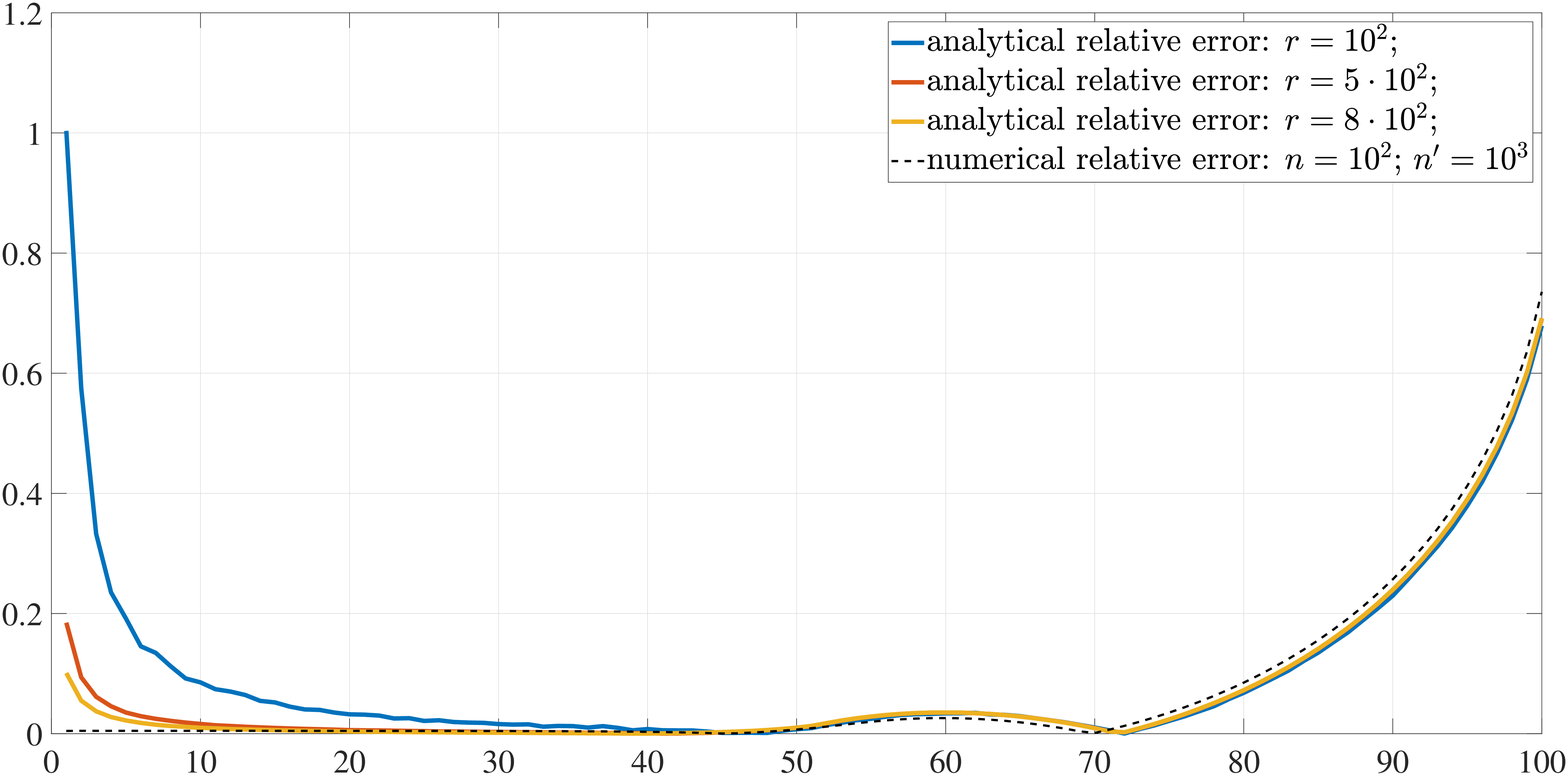}  
			\label{subfig:eta_8}
		}
		\captionof{figure}{Graphic comparison between the numerical relative error $\textbf{err}_k^{(n)}$ and the analytic relative error $\tilde{\textbf{err}}_{k,r}^{(n)}$ as in Definition \ref{def:num_anal_error}, for increasing $r=10^2,5\cdot 10^2,8\cdot10^2$. The values of $n$ and $n'$ are fixed at $10^2$ and $10^3$, respectively, and $\alpha=1$. In Figure \ref{subfig:eta_4} it has been used a $5$-points central FD discretization on uniform grid, while in Figure \ref{subfig:eta_8} it has been used a $9$-points central FD discretization on uniform grid. As it happened in Figure \ref{fig:analytical_err_1}, it is displayed an evident discrepancy between the numerical relative error and the analytical relative errors for the first eigenvalues, which is explained by \eqref{eq:c_alpha,k} and Proposition \ref{prop:non_good_approximation}.}\label{fig:analytic_num_comparison_eta}
	\end{figure}
\end{center}

What is interesting instead is to change the sampling grid and to increase the order of accuracy $\eta$ of the FD discretization method. Indeed, as it was observed in the relations \eqref{eq:necessary_condition_uniform}, \eqref{eq:no_uniform_approx} and \eqref{eq:limit_relative_error_FD_3_point}, it is not possible to achieve a relative uniform approximation of the eigenvalues  $\lambda_k\left(\Eulerc\right)$ if $\prescript{}{\alpha}{\tilde{\omega}_\eta}(x) \neq x^2\pi^2$. From \eqref{eq:FD_symbol}, for every $\eta\geq 1$ it is easy to check that $\max_{(x,\theta)\in[a,b]\times [0,\pi]}\prescript{}{\alpha}{\omega_{\eta}}(x,\theta)= \prescript{}{\alpha}{\tilde{\omega}_\eta}(1)\neq \pi^2$, and so we do not have any improvement by just increasing the order of accuracy $\eta$. But let us observe that:
\begin{itemize}
	\item from Corollary \ref{cor:FD_uniform} and equation \eqref{eq:FD_symbol}, 
	$$
	\lim_{\eta\to \infty} \omega_{\eta}(x,\theta) = \frac{\alpha \tau(x)^2}{\left(\tau'(x)\right)^2\left(\textrm{e}^{\sqrt{\alpha}}-1\right)^2} \theta^2 \qquad \mbox{for every } (x,\theta) \in [1,\textrm{e}^{\sqrt{\alpha}}]\times [0,\pi];
	$$
	\item $\tau(y) = \textrm{e}^{\sqrt{\alpha}y}$ is such that $\frac{\alpha \tau(y)^2}{\left(\tau'(y)\right)^2}\equiv 1$.
\end{itemize}

In some sense, the spectral symbol $\omega_{\eta}$ suggests us to change the uniform grid
$$
\left\{x_j\right\}_{j=1}^n=\left\{1+ \left(\textrm{e}^{\sqrt{\alpha}}-1\right)\frac{j}{n+1}\right\}_{j=1}^n \subset [1,\textrm{e}^{\sqrt{\alpha}}]
$$
by means of the diffeomorphism induced by the Liouville transformation. Indeed, from \eqref{eq:liouville_transform} we have that
\begin{equation*}
y(x)= \frac{\log(x)}{\sqrt{\alpha}} \quad \mbox{for } x \in [1,\textrm{e}^{\sqrt{\alpha}}], \qquad x(y)= \textrm{e}^{\sqrt{\alpha}y} \quad \mbox{for } y \in [0,1],
\end{equation*}  
and therefore we can construct a $C^1$-diffeomorphism $\tau: [1,\textrm{e}^{\sqrt{\alpha}}]\to [1,\textrm{e}^{\sqrt{\alpha}}]$ such that
\begin{equation*}
\tau :  [1,\textrm{e}^{\sqrt{\alpha}}] \xrightarrow[]{\tau_1} [0,1] \xrightarrow[]{\tau_2}  [1,\textrm{e}^{\sqrt{\alpha}}],
\end{equation*}
\begin{equation*}
\tau_1(x) = \frac{1}{\textrm{e}^{\sqrt{\alpha}}-1}\left(x-1\right), \qquad \tau_2(y)= \textrm{e}^{\sqrt{\alpha}y}.
\end{equation*}
The new non-uniform grid is then given by
\begin{equation}\label{eq:non_uniform_grid}
\left\{\bar{x}_j\right\}_{j=1}^n = \left\{\tau(x_j)\right\}_{j=1}^n,
\end{equation}
and it holds that
\begin{equation}\label{eq:nec_condition}
\lim_{\eta\to \infty}\prescript{}{\alpha}{\omega_{\eta}}(\tau(x),\theta) = \lim_{\eta\to \infty} f_\eta(\theta)=\theta^2 \qquad \theta \in [0,\pi],
\end{equation}
and therefore
$$
\lim_{\eta\to \infty}\prescript{}{\alpha}{\tilde{\omega_{\eta}}}(x) =x^2\pi^2 \qquad x \in [0,1].
$$
Of course, this last equality is not enough to guarantee that 
\begin{equation}\label{eq:uniform_approx}
\lim_{n,\eta \to \infty}\left\{\max_{k=1,\ldots,n}\left| \frac{\lambda_k\left(\prescript{\textrm{e}^{\sqrt{\alpha}}}{1}{\mathcal{L}_{\textnormal{dir},\alpha \tau(x)^2}^{(n,\eta)}}\right)}{\lambda_k\left(\Eulerc\right)}  -1\right|\right\} =0,
\end{equation}
since 
$$
 \max_{x\in[0,1]} \left| \frac{\prescript{}{\alpha}{\tilde{\omega}_\eta}(x)}{x^2\pi^2} -1\right|=0
$$
is only a necessary condition, see Theorem \ref{thm:necessary_cond_for_uniformity}. Nevertheless, from figures \ref{fig:comparison_non_uniform_FD}, \ref{fig:comparison_FD_err} and Table \ref{tab:uniform_approx} we can see that \eqref{eq:uniform_approx} seems to be validated numerically. This would suggest that condition \eqref{eq:nec_condition} becomes sufficient if paired by a suitable rearranged (non-uniform) grid and an increasing refinement of the method order. This phenomenon is not confined to this specific case but it occurs in several many different other cases we tested. More investigations will be carried on in future works. 

Finally, in Table \ref{table:maximum_rel_error_FD_eta} we check numerically the validity of Theorem \ref{thm:necessary_cond_for_uniformity}.

\begin{figure}[ht]
	\centering
	\subfloat[Uniform central FD with $\eta=1$]{
		\includegraphics[width=8cm]{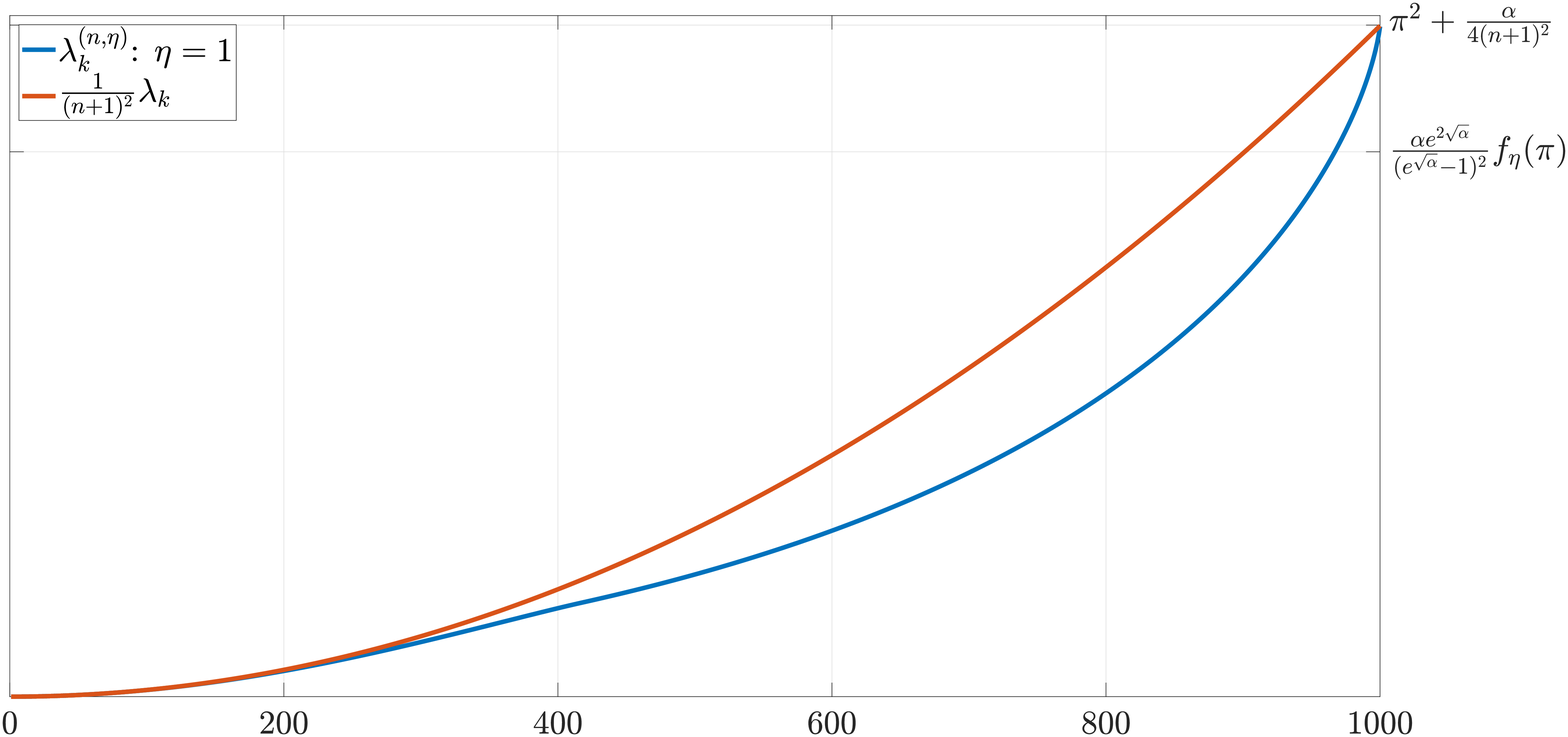}  
		\label{subfig:unif_FD_eta_1}
	}
	\subfloat[Uniform central FD with $\eta=15$]{
		\centering
		\includegraphics[width=8cm]{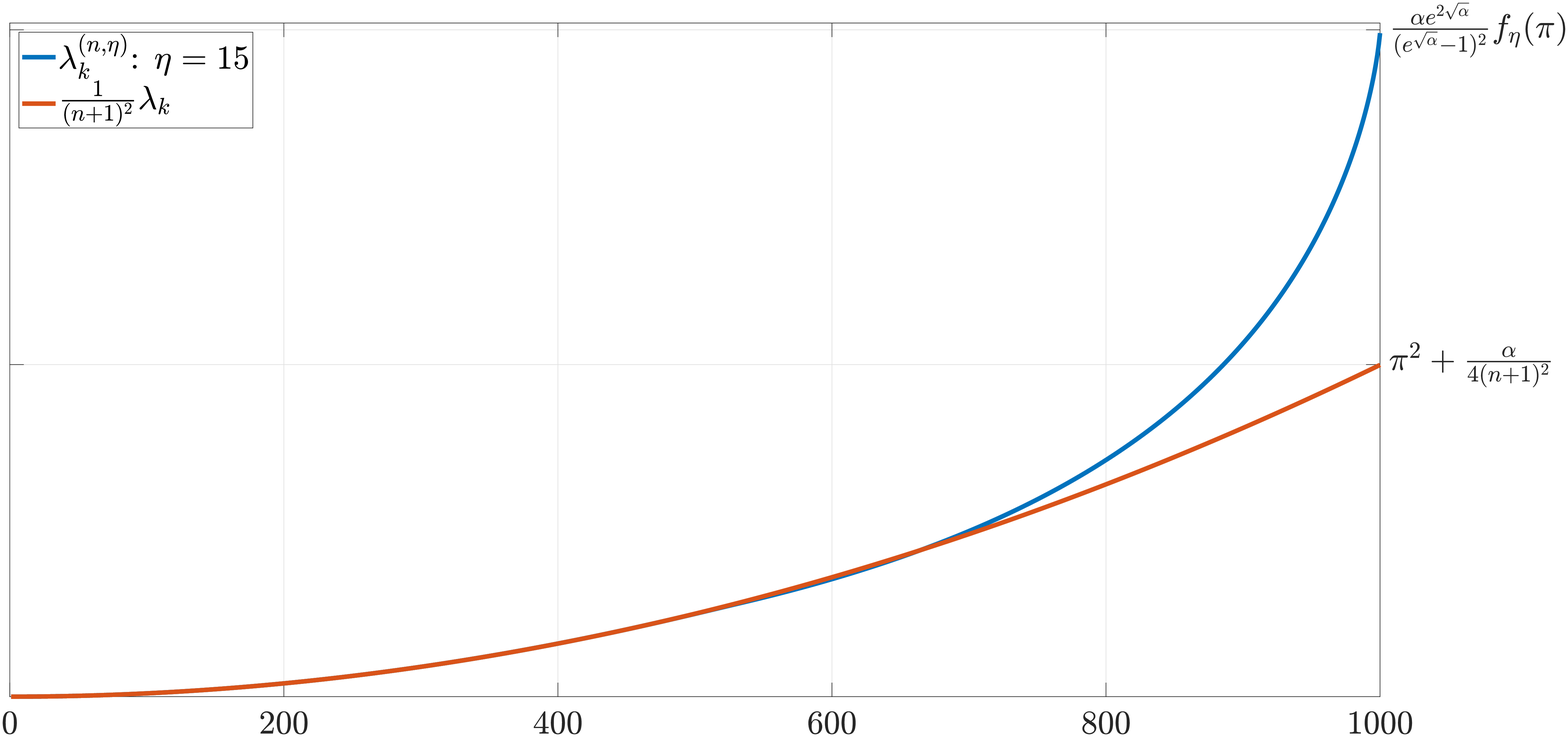}  
		\label{subfig:unif_FD_eta_15}
	}

\centering
\subfloat[Non-uniform central FD with $\eta=1$]{
	\includegraphics[width=8cm]{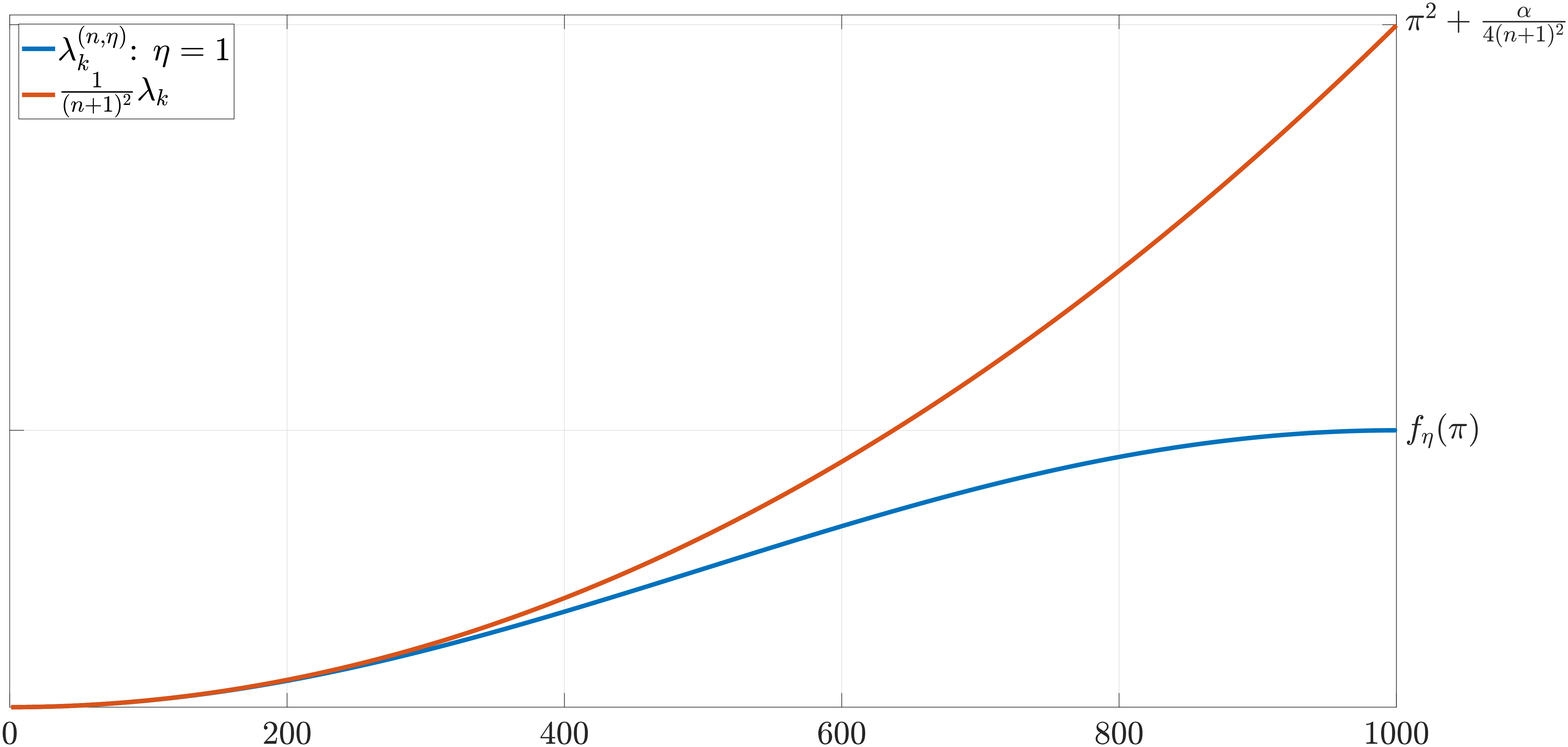}  
	\label{subfig:non_unif_FD_eta_1}
}
\subfloat[Non-uniform central FD with $\eta=15$]{
	\centering
	\includegraphics[width=8cm]{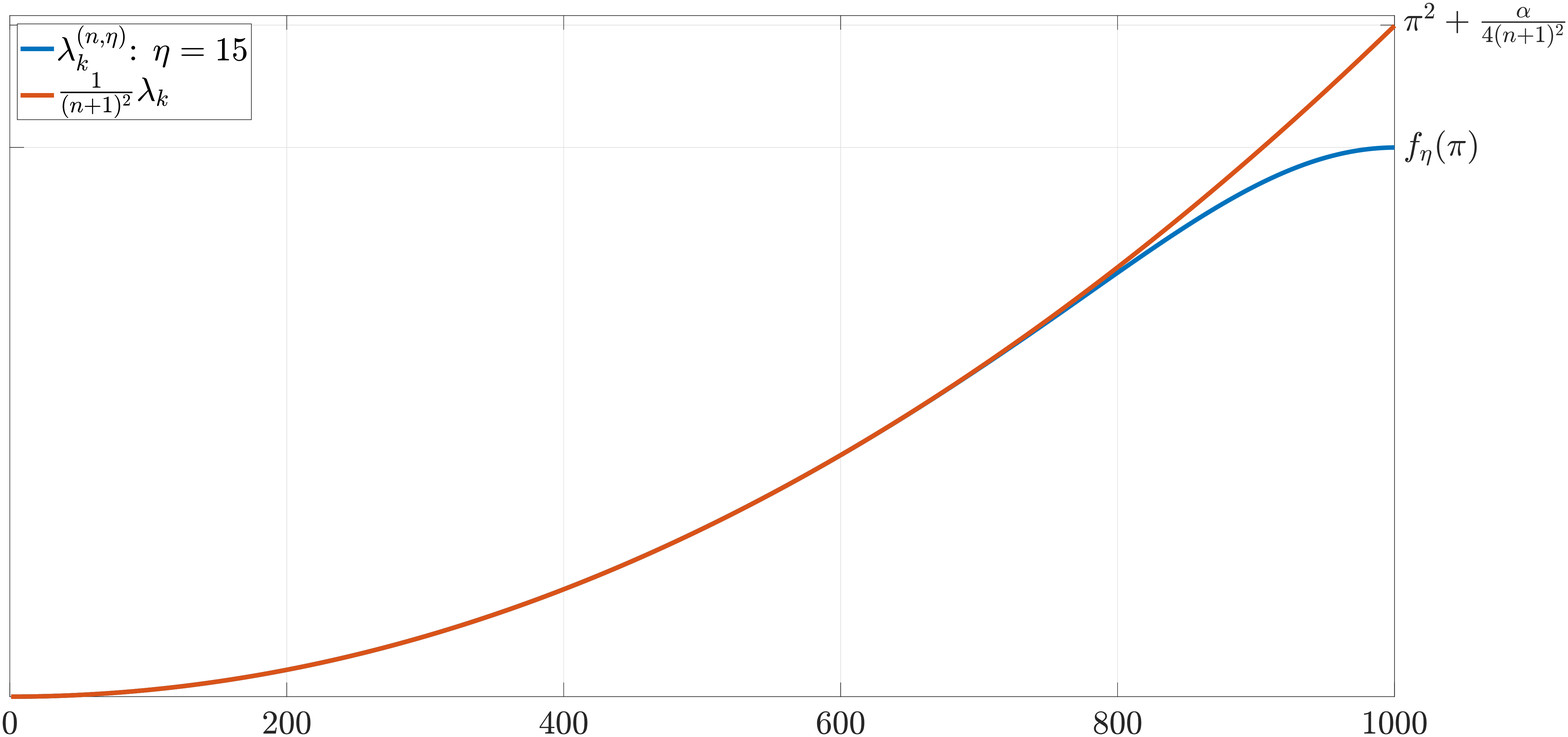}  
	\label{subfig:non_unif_FD_eta_15}
}
\captionof{figure}{Graphic comparison between the eigenvalues distribution of the weighted discrete differential operators $\prescript{\textrm{e}^{\sqrt{\alpha}}}{1}{\hat{\mathcal{L}}_{\textnormal{dir},\alpha \tau(x)^2}^{(n,\eta)}}$ obtained by means of $(2\eta+1)$-points central FD discretization on uniform and non-uniform grids. The parameters $\alpha$ and $n$ are fixed, with $\alpha=1$ and $n=10^3$, while $\eta$ changes. Let us observe that in figures \ref{subfig:non_unif_FD_eta_1}, \ref{subfig:non_unif_FD_eta_15}, i.e., in the case of central FD discretization on the non-uniform grid given by \eqref{eq:non_uniform_grid}, the graph of the eigenvalue distribution seems to converge uniformly to the graph of the exact eigenvalues $(n+1)^{-2}\lambda_k$, as $\eta$ increases. The same phenomenon does not happen in the case of central FD discretization on uniform grid, as it is clear from figures \ref{subfig:unif_FD_eta_1},\ref{subfig:unif_FD_eta_15}. See Table \ref{tab:uniform_approx} for a numerical comparison of the maximum relative errors.}\label{fig:comparison_non_uniform_FD}
\end{figure}
\begin{figure}[ht]
	\centering
	\subfloat[Relative errors for uniform central FD]{
		\includegraphics[width=8cm]{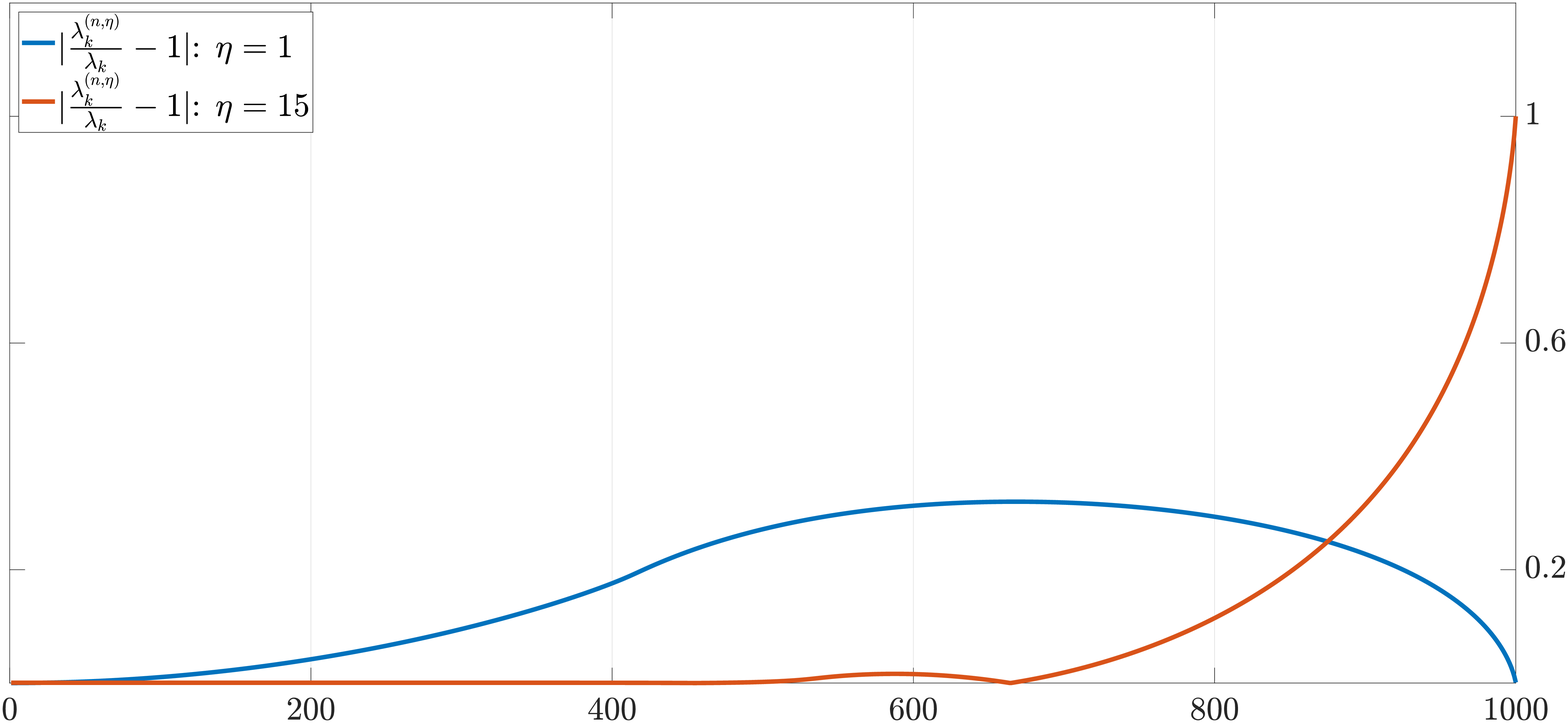}  
		\label{subfig:FD_uniformGrid_error}
	}
	\subfloat[Relative errors for non-uniform central FD]{
		\centering
		\includegraphics[width=8cm]{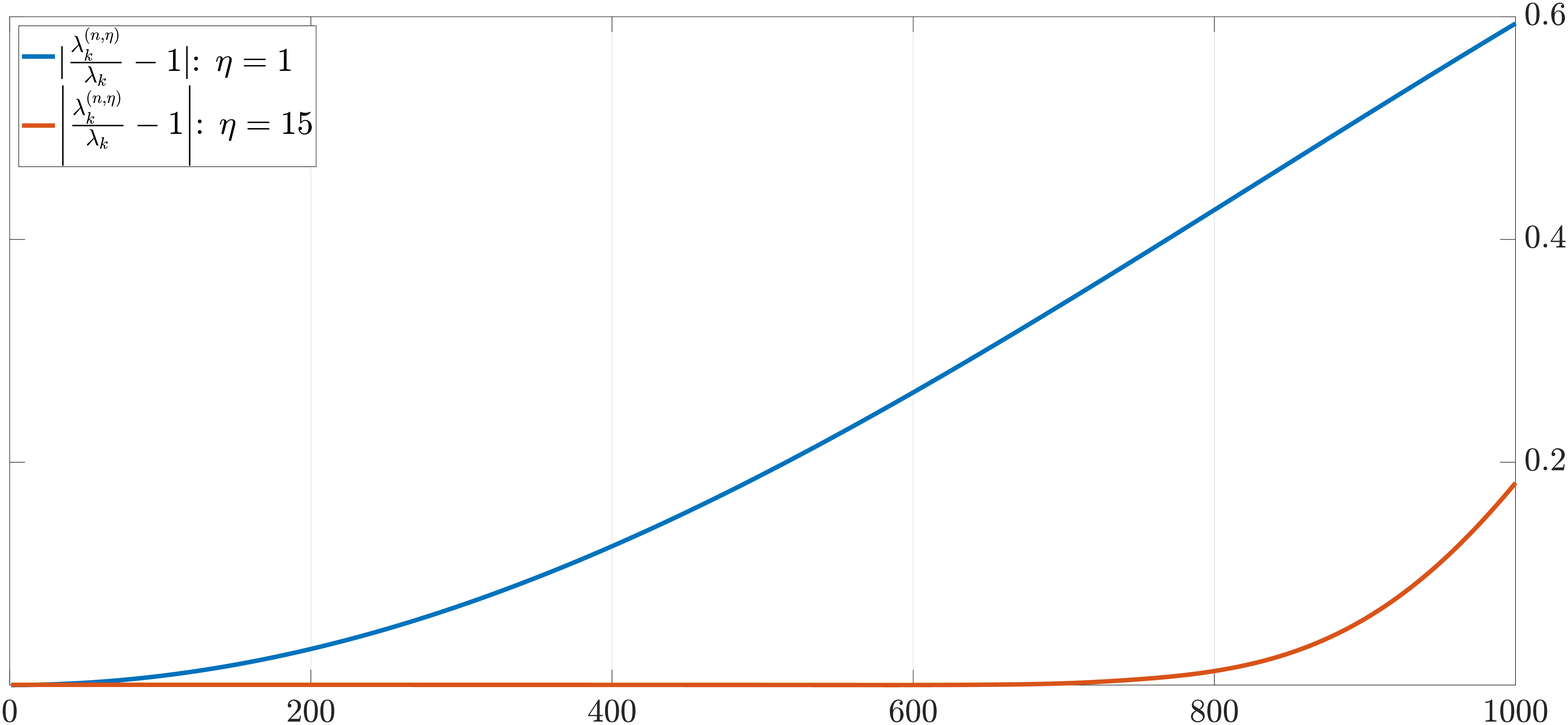}  
		\label{subfig:FD_NonuniformGrid_error}
	}
	\captionof{figure}{Graphic comparison between the eigenvalues relative errors of the discrete differential operators $\prescript{\textrm{e}^{\sqrt{\alpha}}}{1}{\mathcal{L}_{\textnormal{dir},\alpha \tau(x)^2}^{(n,\eta)}}$ obtained by means of $(2\eta+1)$-points central FD discretization on uniform and non-uniform grids, for different values of $\eta$. The parameters $\alpha$ and $n$ are fixed, with $\alpha=1$ and $n=10^3$. In Subfigure \ref{subfig:FD_uniformGrid_error}, where it is used the standard uniform grid, we notice that increasing the order $\eta$ produces a better approximation for the first half eigenvalues but it worsen the approximation in the last half part. On the contrary, from Subfigure \ref{subfig:FD_NonuniformGrid_error} where it is used the non-uniform grid given by \eqref{eq:non_uniform_grid}, increasing the order $\eta$ produces a well-behaved uniform relative approximation.}\label{fig:comparison_FD_err}
\end{figure}
\begin{table}[ht]
	\centering
	\begin{tabular}{l|c|c|c|} 
		\cline{2-4}
		& \multicolumn{3}{c|}{$\max_{k=1,\ldots,n}|\frac{\lambda_k^{(n,\eta)}}{\lambda_k} -1|$ }  \\ 
		\cline{2-4}
		& \multicolumn{1}{c||}{$\eta=1$ } & \multicolumn{1}{c||}{$\eta=10$ } & $\eta=15$          \\ 
		\cline{2-4}
		& $n=10^3$                        & $n=10^3$                         & $n=10^3$           \\ 
		\hline
		\multicolumn{1}{|l|}{uniform grid}     & 0.3201                          & 0.9057                          & 1.0000             \\ 
		\hline
		\multicolumn{1}{|l|}{non-uniform grid} & 0.5939
		                          & 0.2210                           & 0.1814             \\
		\hline
	\end{tabular}
	\captionof{table}{Comparison between the maximum of the eigenvalues relative errors of the discrete differential operators $\prescript{\textrm{e}^{\sqrt{\alpha}}}{1}{\mathcal{L}_{\textnormal{dir},\alpha \tau(x)^2}^{(n,\eta)}}$ obtained by means of $(2\eta+1)$-points central FD discretization on uniform and non-uniform grids, for different values of $\eta$. The parameter $\alpha$ is fixed, with $\alpha=1$. We observe that in the uniform grid case, as $\eta$ increases the maximum increases as well. On the contrary, in the non-uniform grid case given by \eqref{eq:non_uniform_grid}, the maximum decreases significantly as $\eta$ increases. See Figure \ref{fig:comparison_FD_err} for a general overview of the error distribution.}\label{tab:uniform_approx}
\end{table}

\begin{table}[th]
	\centering
	\begin{tabular}{|c|c|c|c||c|c|} 
		\cline{3-6}
		\multicolumn{1}{l}{}           & \multicolumn{1}{l|}{} & \multicolumn{2}{c||}{Uniform grid $\eta=5$} & \multicolumn{2}{c|}{Nonuniform grid $\eta=5$}  \\ 
		\cline{3-6}
		\multicolumn{1}{l}{}           & \multicolumn{1}{l|}{} & $n=10^2$ & $n=10^3$                        & $n=10^2$  & $n=10^3$                           \\ 
		\hline
		\multirow{2}{*}{$\alpha=0.1$ } &  $|\frac{\max|\lambda^{(n,\eta)}_k/\lambda_k|}{\max| \prescript{}{\alpha}{\tilde{\omega}_{\eta,r}}(x)/x^2\pi^2|}-1|$                     & 0.0369   & 0.0018                    &     2.7821e-04      &        2.7510e-06                            \\ 
		\cline{2-6}
		&     $\bar{k}/n$                     & 0.9100  & 0.9050                          &     1      &               1                     \\ 
		\hline\hline		
		\multirow{2}{*}{$\alpha=3$ }   &  $|\frac{\max|\lambda^{(n,\eta)}_k/\lambda_k|}{\max| \prescript{}{\alpha}{\tilde{\omega}_{\eta,r}}(x)/x^2\pi^2|}-1|$                     & 0.0628   & 0.0096                        &     0.0086      &         8.2742e-05                           \\ 
		\cline{2-6}
		&        $\bar{k}/n$                  & 1        & 1                               &  1         &       1                             \\
		\hline
	\end{tabular}\caption{In this table we check numerically the validity of Theorem \ref{thm:necessary_cond_for_uniformity} for different values of $\alpha$ and $n$. The discretization has been made by means of central FD of order $\eta=5$. It can be seen that for every $\alpha$, as $n$ increases then the relative error between $\max_{k=1,\ldots,n}\left|\lambda_k\left(\prescript{\textrm{e}^{\sqrt{\alpha}}}{1}{\mathcal{L}_{\textnormal{dir},\alpha \tau(x)^2}^{(n,\eta)}}\right)/\lambda_k\left(\Eulerc\right)\right|$ and $\max_{x\in[0,1]}\left|\prescript{}{\alpha}{\tilde{\omega}_{\eta,r}}(x)/x^2\pi^2\right|$ decreases, confirming \eqref{eq:limit_relative_error_FD_3_point}. In the table is reported as well the quotient $\bar{k}/n$, where $\bar{k}$ is the $k$-th eigenvalue which achieves the maximum relative error between $\lambda_k\left(\prescript{\textrm{e}^{\sqrt{\alpha}}}{1}{\mathcal{L}_{\textnormal{dir},\alpha \tau(x)^2}^{(n,\eta)}}\right)$ and $\lambda_k\left(\Eulerc\right)$. We can notice that $\bar{k}/n$ is always bounded and it tends to a finite value in $(0,1]$ as $n$ increases. The approximation of $\prescript{}{\alpha}{\tilde{\omega}_{\eta,r}}$ is obtained by means of Algorithm \ref{alg:omega} with $r$ fixed, $r= n$.}
\label{table:maximum_rel_error_FD_eta}
\end{table}

\FloatBarrier

\subsection{IgA discretization by B-slpine of degree $\eta$ and smoothness $C^{\eta-1}$}\label{ssec:galerkin}
In this subsection we continue our analysis in the IgA framework. We just collect all the numerical results of the tests, which confirm again what observed in subsections \ref{ssec:example_uniform_3_points} and \ref{ssec:FD_nonuniform}. The only difference relies on the fact that we took out the largest eigenvalues of the discrete operator $\prescript{\textrm{e}^{\sqrt{\alpha}}}{1}{\mathcal{L}_{\textnormal{dir},\alpha \tau(x)^2}^{(n+\eta-1,\eta)}}$. This is due to the fact that the IgA discretization suffers of a fixed number of outliers which depends on the degree $\eta$ and it is independent of $n$, see \cite[Chapter 5.1.2 p. 153]{CHB}. So, we consider only the eigenvalues $\lambda_k\left(\prescript{\textrm{e}^{\sqrt{\alpha}}}{1}{\mathcal{L}_{\textnormal{dir},\alpha \tau(x)^2}^{(n+\eta-1,\eta)}}\right)$ such that
$$
\lambda_k\left(\prescript{\textrm{e}^{\sqrt{\alpha}}}{1}{\mathcal{L}_{\textnormal{dir},\alpha \tau(x)^2}^{(n+\eta-1,\eta)}}\right) \in R_{\prescript{}{\alpha}{\omega_{\eta}}}.
$$
We stress out the fact that the number of outliers is fixed for every $n$, in accordance with Theorem \ref{thm:clustering&spectral_attraction}.

In Figure \ref{fig:analytic_num_comparison_IgA_eta} we observe again the discrepancy between the analytic relative errors and the numerical relative error. 
\begin{center}
	\begin{figure}[th]
		\centering
		\subfloat[Uniform IgA with $\eta=1$]{
			\includegraphics[height=33mm]{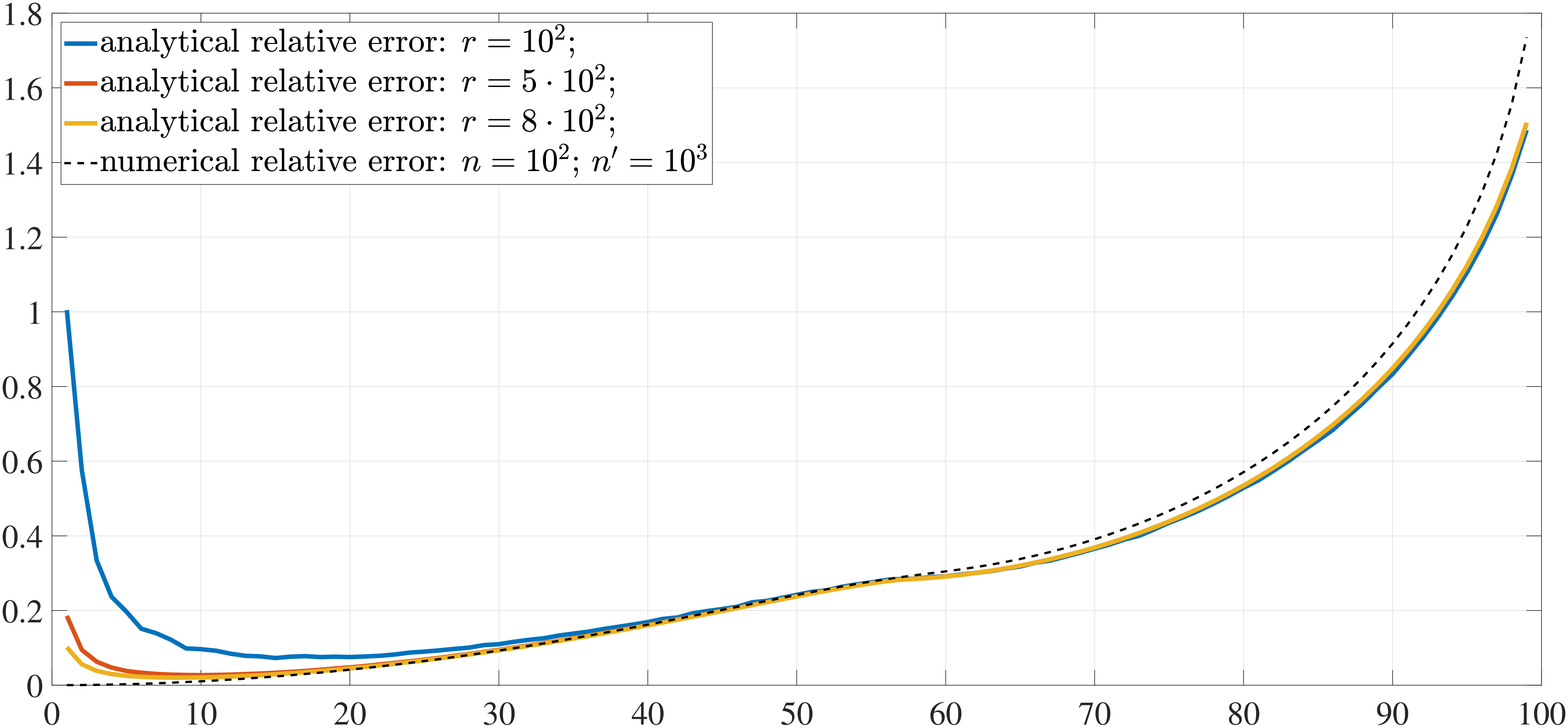}  
			\label{subfig:anal_IgA_eta_1}
		}
		\subfloat[Uniform IgA with $\eta=4$]{
			\centering
			\includegraphics[height=33mm]{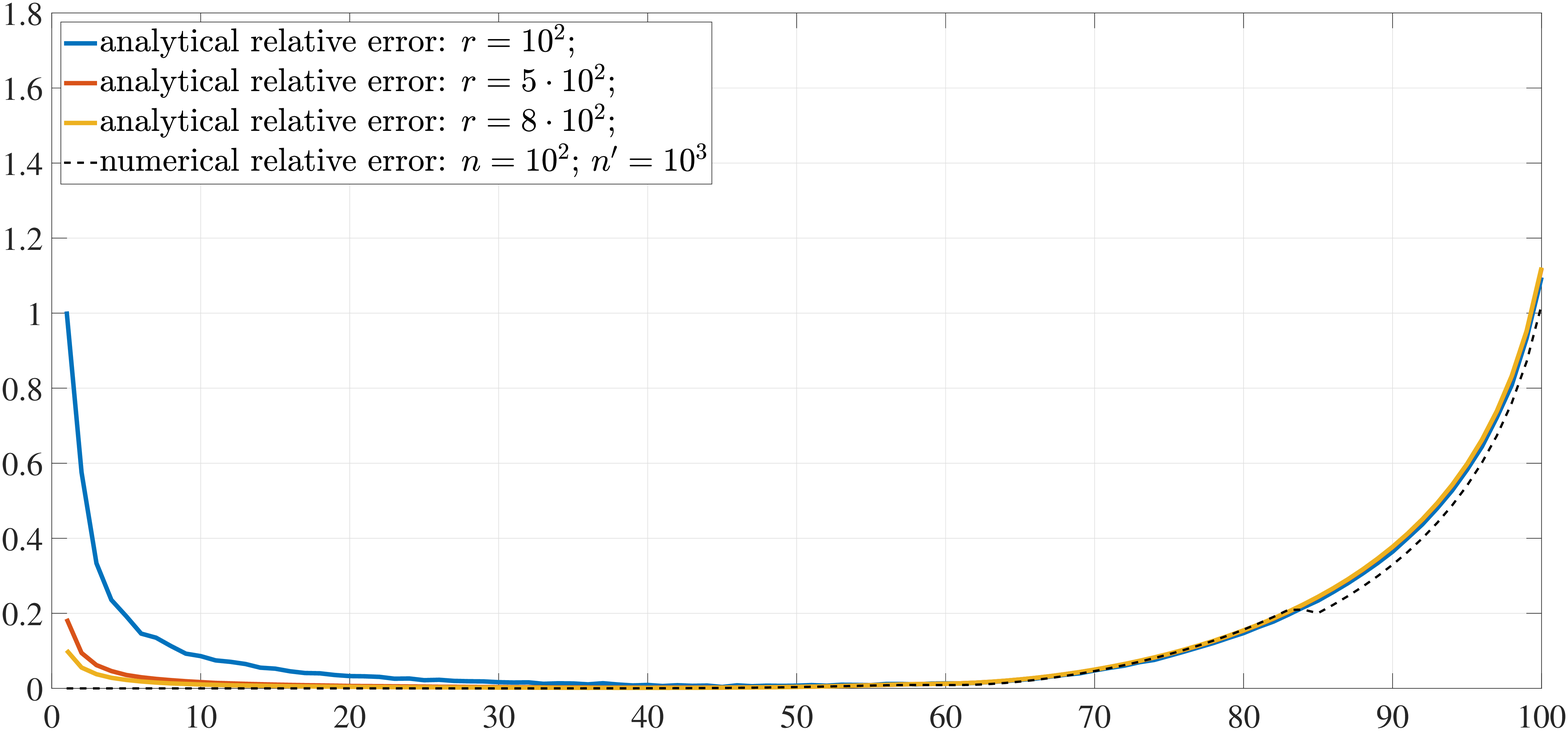}  
			\label{subfig:anal_IgA_eta_4}
		}
		\captionof{figure}{Graphic comparison between the numerical relative error $\textbf{err}_k^{(n)}$ and the analytic relative error $\tilde{\textbf{err}}_{k,r}^{(n)}$ as in Definition \ref{def:num_anal_error}, for increasing $r=10^2,5\cdot 10^2,8\cdot10^2$. The values of $n$ and $n'$ are fixed at $10^2$ and $10^3$, respectively, and $\alpha=1$. In Figure \ref{subfig:anal_IgA_eta_1} it has been used an IgA discretization of order $\eta=1$, while in Figure \ref{subfig:anal_IgA_eta_4} it has been used an IgA discretization of order $\eta=4$. Both of them are made on a uniform grid $\tau(x)=x$. As it happened in Figure \ref{fig:analytical_err_1}, it is displayed an evident discrepancy between the numerical relative error and the analytical relative errors for the first eigenvalues, which is explained by Proposition \ref{prop:non_good_approximation}.}\label{fig:analytic_num_comparison_IgA_eta}
	\end{figure}
\end{center}

In Figure \ref{fig:comparison_non_uniform_IgA} and Figure \ref{fig:comparison_IgA_err} we compare the graphs of the eigenvalue distributions and the relative errors, respectively, between the discrete eigenvalues $\lambda_k\left(\prescript{\textrm{e}^{\sqrt{\alpha}}}{1}{\mathcal{L}_{\textnormal{dir},\alpha \tau(x)^2}^{(n+\eta-1,\eta)}}\right)$ and the exact eigenvalues $\lambda_k\left(\Eulerc\right)$, for different values of $\eta$ on uniform and non-uniform grids. They line up with the numerics of Table \ref{table:maximum_rel_error_IgA_eta}: if the sampling grid is given by \eqref{eq:non_uniform_grid}, the maximum relative error decreases as the order of approximation increases.

	\begin{table}[th]
		\centering
		\begin{tabular}{l|c|c|c|} 
			\cline{2-4}
			& \multicolumn{3}{c|}{$\max_{k=1,\ldots,n}|\frac{\lambda_k^{(n,\eta)}}{\lambda_k} -1|$ }  \\ 
			\cline{2-4}
			& \multicolumn{1}{c||}{$\eta=1$ } & \multicolumn{1}{c||}{$\eta=5$ } & $\eta=10$           \\ 
			\cline{2-4}
			& $n=10^2$                        & $n=10^2$                        & $n=10^2$            \\ 
			\hline
			\multicolumn{1}{|l|}{uniform grid}     & 1.7653                          & 1.0688                          & 1.1465              \\ 
			\hline
			\multicolumn{1}{|l|}{non-uniform grid} & 0.4433                          & 0.0483                          & 0.0265              \\
			\hline
		\end{tabular}
	\captionof{table}{Comparison between the maximum of the eigenvalues relative errors of the discrete differential operators $\prescript{\textrm{e}^{\sqrt{\alpha}}}{1}{\mathcal{L}_{\textnormal{dir},\alpha \tau(x)^2}^{(n,\eta)}}$ obtained by means of IgA discretization of order $\eta$ on uniform and non-uniform grids, for different values of $\eta$. The parameter $\alpha$ is fixed, with $\alpha=1$. We observe that in the non-uniform grid case given by \eqref{eq:non_uniform_grid}, the maximum decreases significantly as $\eta$ increases. See Figure \ref{fig:comparison_IgA_err} for a general overview of the error distribution. Let us notice that we did not take in consideration the outliers, see Definition \ref{def:outliers}.}\label{table:maximum_rel_error_IgA_eta}
\end{table}

\begin{figure}[th]
	\centering
	\subfloat[Uniform IgA with $\eta=1$]{
		\includegraphics[width=8cm]{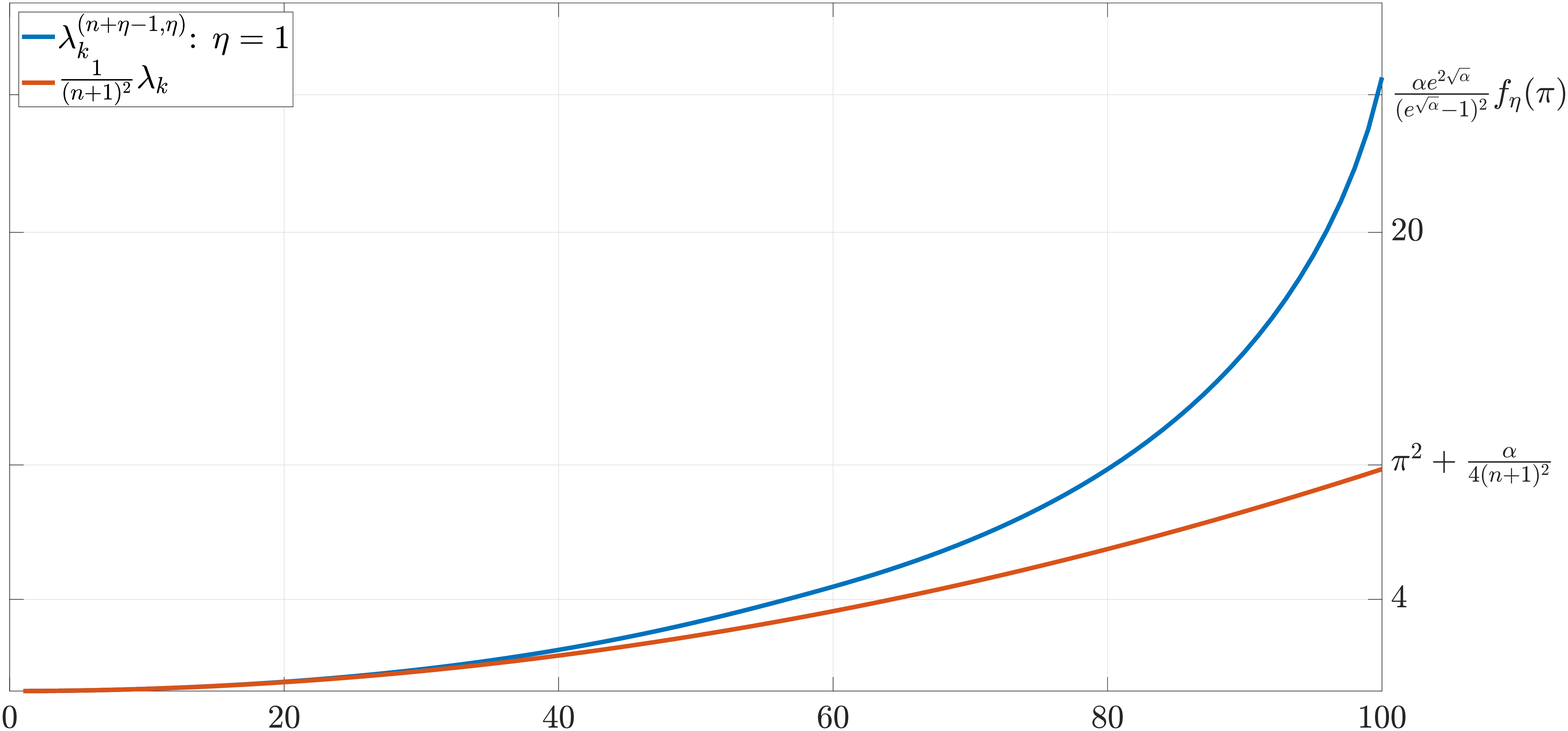}  
		\label{subfig:unif_IgA_eta_1}
	}
	\subfloat[Uniform IgA with $\eta=10$]{
		\centering
		\includegraphics[width=8cm]{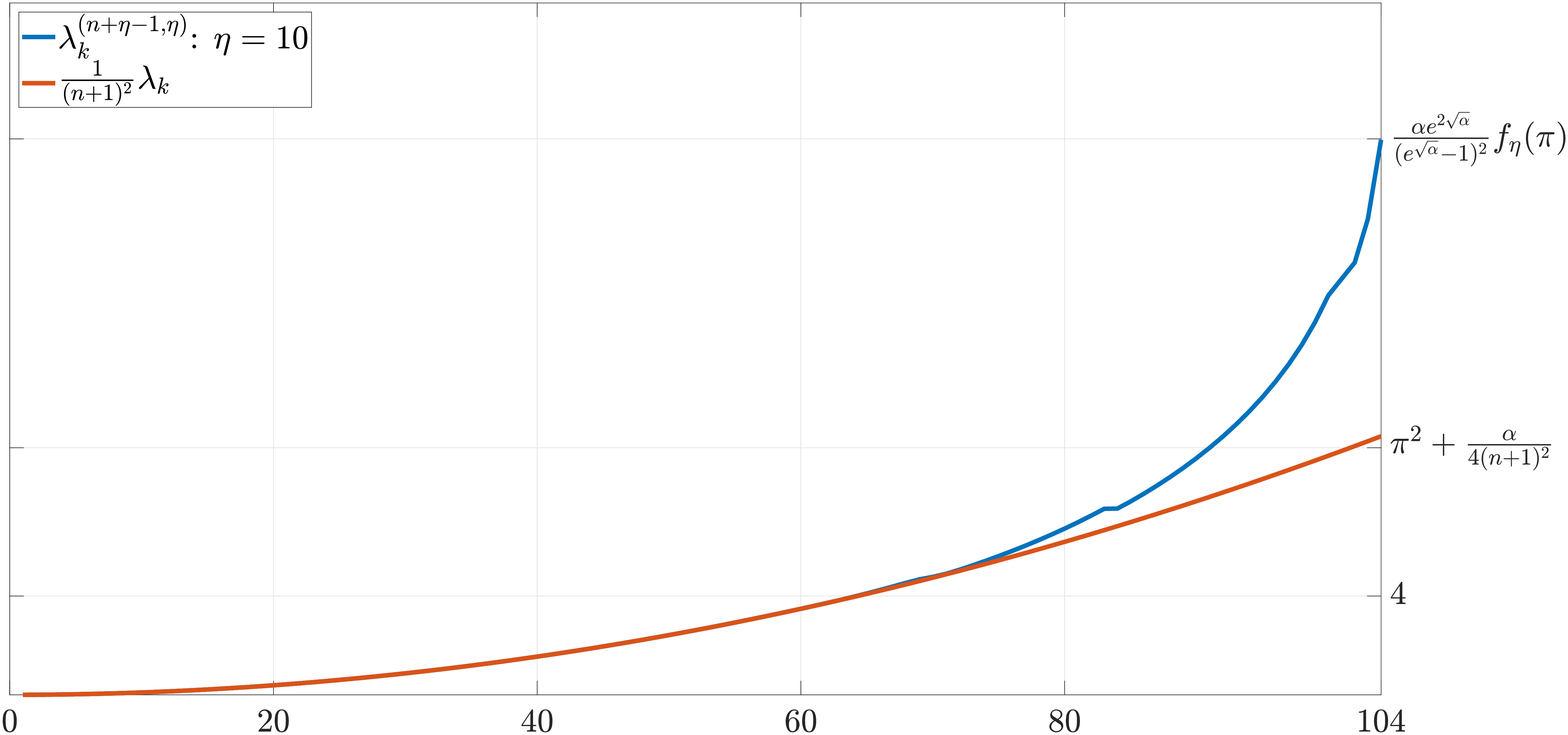}  
		\label{subfig:unif_IgA_eta_10}
	}
	
	\centering
	\subfloat[Non-uniform IgA with $\eta=1$]{
		\includegraphics[width=8cm]{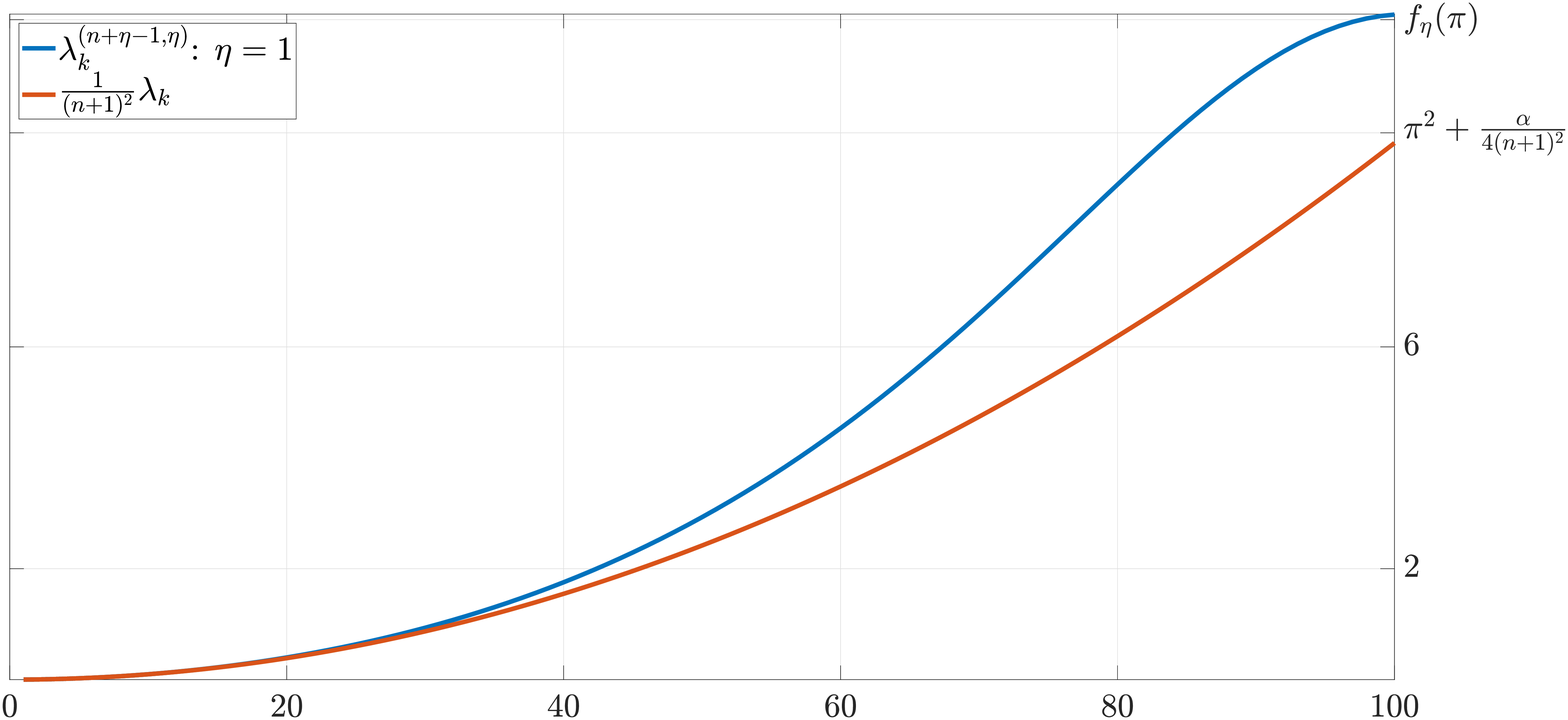}  
		\label{subfig:non_unif_IgA_eta_1}
	}
	\subfloat[Non-uniform IgA with $\eta=10$]{
		\centering
		\includegraphics[width=8cm]{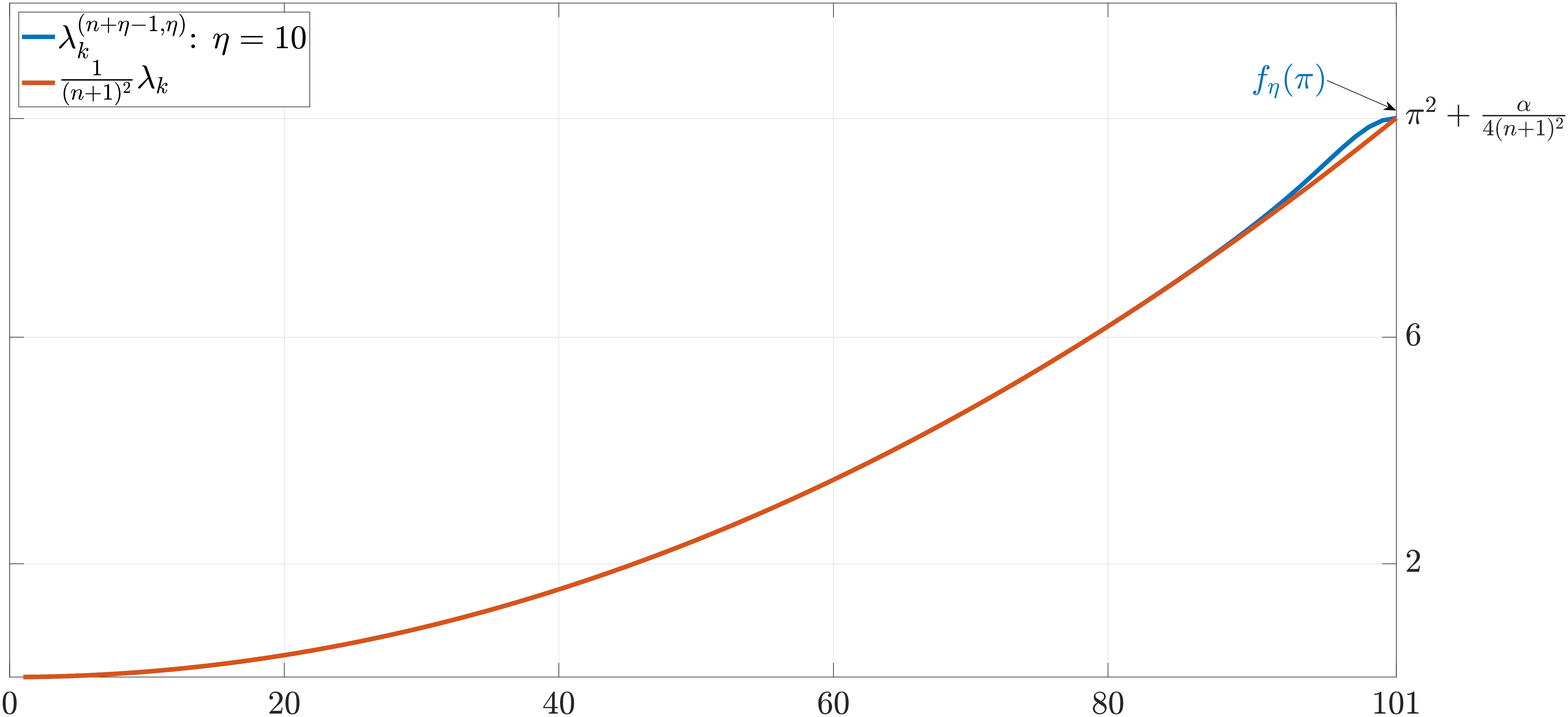}  
		\label{subfig:non_unif_IgA_eta_10}
	}
	\captionof{figure}{Graphic comparison between the eigenvalues distribution of the weighted discrete differential operators $\prescript{\textrm{e}^{\sqrt{\alpha}}}{1}{\hat{\mathcal{L}}_{\textnormal{dir},\alpha \tau(x)^2}^{(n+\eta-1,\eta)}}$ obtained by means of IgA discretization of order $\eta$ on uniform and non-uniform grids. The parameters $\alpha$ and $n$ are fixed, with $\alpha=1$ and $n=10^2$, while $\eta$ changes. Let us observe that in figures \ref{subfig:non_unif_IgA_eta_1}, \ref{subfig:non_unif_IgA_eta_10}, i.e., in the case of IgA discretization on the non-uniform grid given by \eqref{eq:non_uniform_grid}, the graph of the eigenvalue distribution seems to converge uniformly to the graph of the exact eigenvalues $(n+1)^{-2}\lambda_k$, as $\eta$ increases. The same phenomenon does not happen in the case of IgA discretization on uniform grid, as it is clear from figures \ref{subfig:unif_IgA_eta_1},\ref{subfig:unif_IgA_eta_10}. Let us notice that we did not take in consideration the outliers, see Definition \ref{def:outliers}.}\label{fig:comparison_non_uniform_IgA}
\end{figure}

\begin{figure}[th]
	\centering
	\subfloat[Relative errors for uniform central FD]{
		\includegraphics[height=35mm]{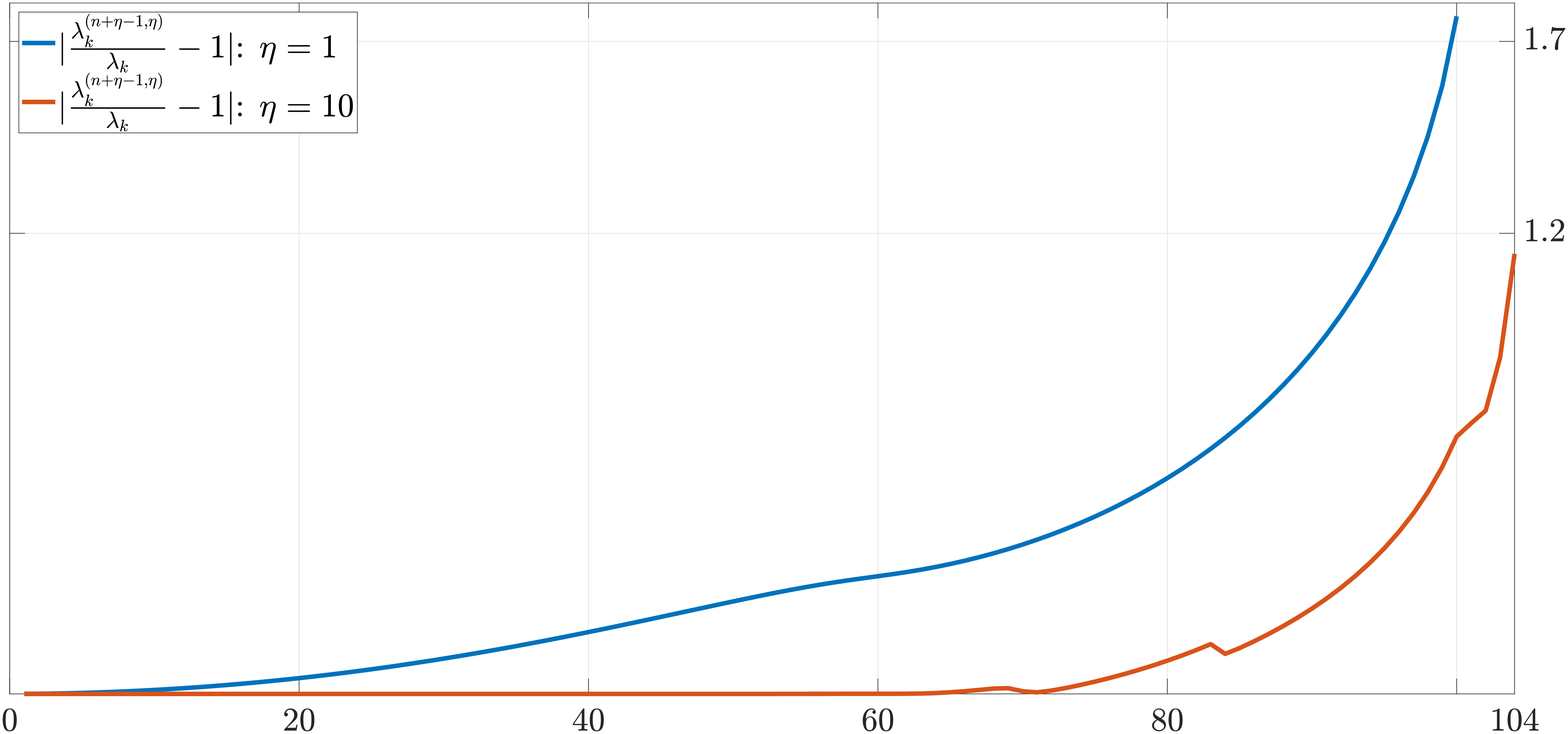}  
		\label{subfig:IgA_uniformGrid_error}
	}
	\subfloat[Relative errors for non-uniform central FD]{
		\centering
		\includegraphics[height=35mm]{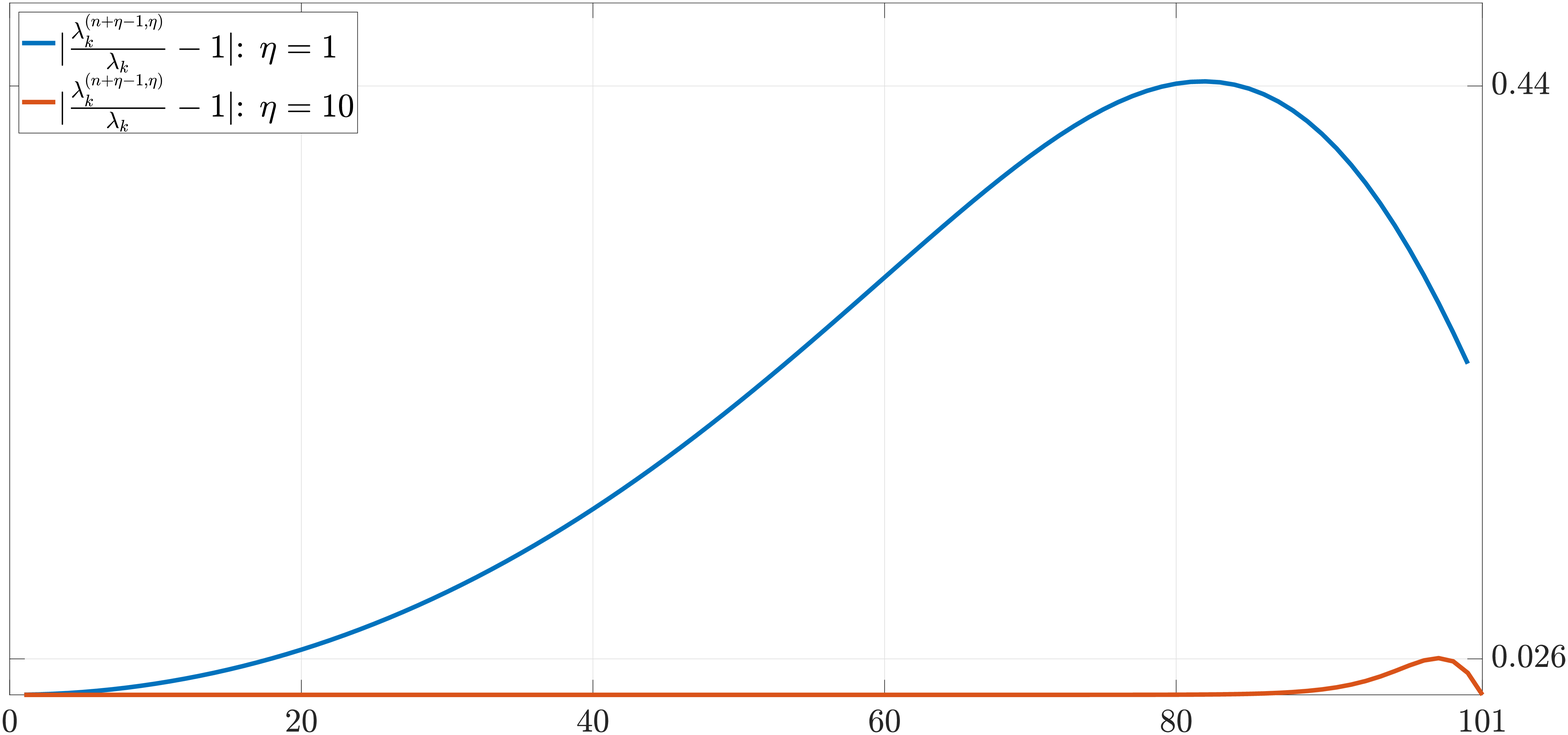}  
		\label{subfig:IgA_NonuniformGrid_error}
	}
	\captionof{figure}{Graphic comparison between the eigenvalues relative errors of the discrete differential operators $\prescript{\textrm{e}^{\sqrt{\alpha}}}{1}{\hat{\mathcal{L}}_{\textnormal{dir},\alpha \tau(x)^2}^{(n+\eta-1,\eta)}}$ obtained by means of IgA discretization of order $\eta$ on uniform and non-uniform grids, for different values of $\eta$. The parameters $\alpha$ and $n$ are fixed, with $\alpha=1$ and $n=10^2$. We notice from Subfigure \ref{subfig:IgA_NonuniformGrid_error}, where it is used the non-uniform grid given by \eqref{eq:non_uniform_grid}, that increasing the order $\eta$ produces a well-behaved uniform relative approximation. Let us notice that we did not take in consideration the outliers, see Definition \ref{def:outliers}.}\label{fig:comparison_IgA_err}
\end{figure}

\begin{table}[th]
	\centering
	\begin{tabular}{|c|c|c|c||c|c|} 
		\cline{3-6}
		\multicolumn{1}{l}{}           & \multicolumn{1}{l|}{} & \multicolumn{2}{c||}{Uniform grid $\eta=4$} & \multicolumn{2}{c|}{Nonuniform grid $\eta=4$}  \\ 
		\cline{3-6}
		\multicolumn{1}{l}{}           & \multicolumn{1}{l|}{} & $n=10^2$ & $n=10^3$                        & $n=10^2$  & $n=10^3$                           \\ 
		\hline
		\multirow{2}{*}{$\alpha=0.1$ } &  $|\frac{\max|\lambda^{(n,\eta)}_k/\lambda_k|}{\max| \prescript{}{\alpha}{\tilde{\omega}_{\eta,r}}(x)/x^2\pi^2|}-1|$                     & 0.3397   &           0.0881          &     0.0016      &        1.7241e-04                            \\ 
		\cline{2-6}
		&     $\bar{k}/n$                     &1 &         1                     &    0.9400       &              0.9300                   \\ 
		\hline\hline		
		\multirow{2}{*}{$\alpha=3$ }   &  $|\frac{\max|\lambda^{(n,\eta)}_k/\lambda_k|}{\max| \prescript{}{\alpha}{\tilde{\omega}_{\eta,r}}(x)/x^2\pi^2|}-1|$                     & 0.1735   &             0.0425          &     0.0016     &         1.7243e-04                         \\ 
		\cline{2-6}
		&        $\bar{k}/n$                  & 1        &     1                           &  0.9400         &                             0.9300       \\
		\hline
	\end{tabular}\caption{In this table we check numerically the validity of Theorem \ref{thm:necessary_cond_for_uniformity} for different values of $\alpha$ and $n$. The discretization has been made by means of IgA of order $\eta=4$. It can be seen that for every $\alpha$, as $n$ increases then the relative error between $\max_{k=1,\ldots,n}\left|\lambda_k\left(\prescript{\textrm{e}^{\sqrt{\alpha}}}{1}{\mathcal{L}_{\textnormal{dir},\alpha x^2}^{(n+\eta-1,\eta)}}\right)/\lambda_k\left(\Eulerc\right)\right|$ and $\max_{x\in[0,1]}\left|\prescript{}{\alpha}{\tilde{\omega}_{\eta,r}}(x)/x^2\pi^2\right|$ decreases, confirming \eqref{eq:limit_relative_error_FD_3_point}. In the table is reported as well the quotient $\bar{k}/n$, where $\bar{k}$ is the $k$-th eigenvalue which achieves the maximum relative error between $\lambda_k\left(\prescript{\textrm{e}^{\sqrt{\alpha}}}{1}{\mathcal{L}_{\textnormal{dir},\alpha x^2}^{(n+\eta-1,\eta)}}\right)$ and $\lambda_k\left(\Eulerc\right)$. We can notice that $\bar{k}/n$ is always bounded and it tends to a finite value in $(0,1]$ as $n$ increases. The approximation of $\prescript{}{\alpha}{\tilde{\omega}_{\eta,r}}$ is obtained by means of Algorithm \ref{alg:omega} with $r$ fixed, $r= n$. Let us notice that we did not take in consideration the outliers, see Definition \ref{def:outliers}.}
	\label{table:maximum_rel_error_IgA_eta_nec_cond}
\end{table}

\FloatBarrier

\subsection{The $L^1$ case}\label{ssec:L1}
We leave for a moment the case of regular SLPs. In \cite{Garoni18} it was addressed the issue of extending the spectral symbol analysis to the case of $\textnormal{L}^1$ coefficients. We show in the next example that the sampling of the spectral symbol does not provide accurate approximation of the eigenvalues of the weighted matrix discretization operator even in the sense of the absolute error.

Let us fix $p(x)=x^{-1/2} \in \textnormal{L}^1(0,1)$, $q(x) \equiv 0$ in \eqref{eq:S-L_general2} with Dirichlet BCs, namely
\begin{equation}\label{eq:L1}
\begin{cases}-\partial_x\left(x^{-1/2}\partial_xu(x)\right) = \lambda u(x) & x\in (0,1),\\
u(0)=u(1)=0.
\end{cases}
\end{equation}
Then the spectral symbol given by a $3$-points central FD scheme is 
$$
\omega(x,\theta)=x^{-1/2}(2-2\cos(\theta)), \qquad (x,\theta) \in (0,1]\times [0,\pi],
$$
see \cite[Theorem 10.5]{GS17} (the proof works fine even if $p(x) \in C([0,1])$ is replaced by $p(x) \in L^1([0,1])$). It is not difficult to prove that the monotone rearrangement $\tilde{\omega}: [0,1)\to [0,+\infty)$ is such that
\begin{equation}\label{eq:asymptotic_bound}
\tilde{\omega}_n^{(n)} = \tilde{\omega}\left(\frac{n}{n+1}\right) \sim 4\sqrt{n+1},  
\end{equation}
and that
\begin{equation}\label{eq:upper_bound}
\lambda_n\left(\prescript{1}{0}{\hat{\mathcal{L}}_{\textnormal{dir},x^{-1/2}}^{(n)}}\right)\leq \sqrt{2}\left(1+\frac{2}{\sqrt{3}}\right)\sqrt{n+1} < 4\sqrt{n+1}
\end{equation}
where the upper bound on the largest eigenvalue is proven by the Gershgorin theorems. The limit relation in Remark \ref{rem:ssymbol_sampling} still holds, 
 \begin{equation}\label{eq:limit_rel_L1}
 \lim_{n \to \infty} \frac{1}{n} \sum_{k=1}^{n}\lambda_k\left(\prescript{1}{0}{\hat{\mathcal{L}}_{\textnormal{dir},x^{-1/2}}^{(n)}}\right) = \frac{1}{\pi}\iint_{[0,1]\times [0,\pi]} x^{-1/2}(2-2\cos(\theta)) \,  dxd\theta = 4,
\end{equation}
but on the other hand,
\begin{equation}\label{eq:divergence}
\left\|\tilde{\omega}_{k}^{(n)}-\lambda_k\left(\prescript{1}{0}{\hat{\mathcal{L}}_{\textnormal{dir},x^{-1/2}}^{(n)}}\right)\right\|_\infty := \max_{k=1,\ldots,n}\left\{\left| \tilde{\omega}\left(\frac{k}{n+1}\right) - \lambda_k\left(\prescript{1}{0}{\hat{\mathcal{L}}_{\textnormal{dir},x^{-1/2}}^{(n)}}\right)\right|  \right\}\to \infty.
\end{equation}
In Figure \ref{fig:comparison_L1} and Table \ref{tab:comparison:L1} it is possible to see a summary of these last considerations.  

\begin{remark}
Let us observe that Corollary \ref{cor:discrete_Weyl_law} is still valid on every compact subset $K \Subset [0,1)$. For example, let $x_0 \in (0,1)$: then 
$$
\max\left\{\left| \tilde{\omega}\left(\frac{k}{n+1}\right) - \lambda_k\left(\prescript{1}{0}{\hat{\mathcal{L}}_{\textnormal{dir},x^{-1/2}}^{(n)}}\right)\right| \, : \, \frac{k}{n}\leq x_0 \right\}\to 0 \qquad \mbox{as } n \to \infty.
$$
Indeed, $\tilde{\omega}$ is absolute continuous on every compact subset which does not contain $x=1$.
\end{remark}

\begin{center}
	\includegraphics[width=15cm]{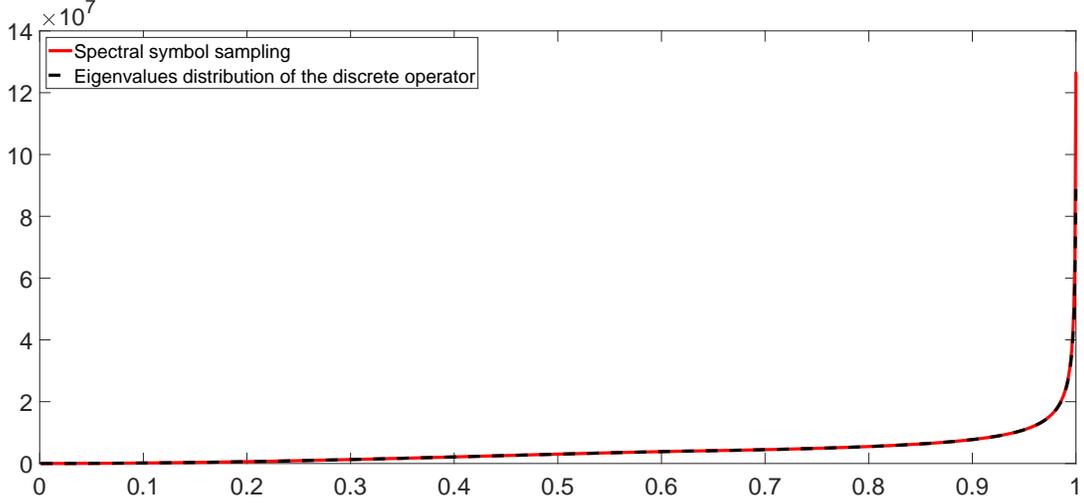}
	\captionof{figure}{Graphic comparison between the analytic approximation of the eigenvalues given by the uniform sampling of $(n+1)^2\tilde{\omega}_{r,k}$ (continous red line) and the eigenvalues of the discrete differntial operator $\lambda_k\left(\prescript{1}{0}{\mathcal{L}_{\textnormal{dir},x^{-1/2}}^{(n)}}\right)$ (dotted black line) of Probelm \eqref{eq:L1}.  On the $x$-axis is reported the quotient $k/n$, for $k=1,\ldots,n$. $\tilde{\omega}_r$ is computed according to Algorithm \ref{alg:omega} with $r=n=10^3$. The uniform sampling of $(n+1)^2\tilde{\omega}_{r,k}$ seems to approximate well the egienvalues $\lambda_k\left(\prescript{1}{0}{\mathcal{L}_{\textnormal{dir},x^{-1/2}}^{(n)}}\right)$, but it is only a false perception, as reported in Table \ref{tab:comparison:L1}. The sovrapposition of the graphs on compact sets $[0,x_0]\subset [0,1)$ is explained by Theorem \ref{thm:discrete_Weyl_law} and the limit \eqref{eq:discrete_Weyl_law2_2} (or equivalently \eqref{convergence_to_ICDF2}).}\label{fig:comparison_L1} 
\end{center}

\begin{table}[th]
	\centering
	\begin{tabular}{|l|c|c|c|} 
		\cline{2-4}
		\multicolumn{1}{l|}{}                                       & $r=n=10^2$  & $r=n=10^3$  & $r=n=2\cdot10^3$   \\ 
		\hline
		$\|\tilde{\omega}_{r,k}^{(n)}-\lambda_k\|_\infty$          & 11.7523     & 37.0283     & 52.3532            \\ 
		\hline\hline
		$\max_{k=1,\ldots,n}\{\tilde{\textbf{err}}_{r,k}^{(n)}\}$  & 0.4133      & 0.4136      & 0.4136             \\ 
		\hline\hline
		$\tilde{\omega}_{r,n}^{(n)}/\sqrt{n+1}$                    & 3.9990      & 4           & 4                  \\ 
		\hline\hline
		$\lambda_n/\sqrt{n+1}$                                     & 2.8296      & 2.8296      & 2.8296             \\ 
		\hline\hline
		$n^{-1}\sum_{k=1}^n \lambda_k$                             & 3.7663      & 3.9200      & 3.9428             \\ 
		\hdashline[1pt/1pt]
		$\int_0^1\tilde{\omega}(x)\,d\mu_1(x)$                     & 4           & 4           & 4                  \\
		\hline
	\end{tabular}
\caption{Comparison between an equispaced sampling of the (approximated) monotone rearrangement $\tilde{\omega}_r$ and the eigenvalues $\lambda_k$ of the discrete differential operator of Problem \eqref{eq:L1}. $\tilde{\omega}_r$ is computed according to Algorithm \ref{alg:omega} with $r=n$. In the first row it is calculated the absolute error: it increases as $n$ increases as stated in \eqref{eq:divergence}. In the second row it is calculated the maximum of the analytic relative error: it saturates at a lower bound $c>0$. In the third and fourth row are validated the estimates \eqref{eq:asymptotic_bound} and \eqref{eq:upper_bound}, respectively. In the sixth row both the left-hand side and the right-hand side of Equation \eqref{eq:limit_rel_L1} are compared.}\label{tab:comparison:L1}
\end{table}

\section{Theoretical results}\label{sec:theory}

\subsection{Proofs of results presented in Section \ref{sec:example}}\label{ssec:generalization_sec4}
We summarize and generalize the results of Section \ref{sec:example}. 

\begin{proposition}\label{prop:non_good_approximation}
	Let us consider the Sturm-Liouville eigenvalue problem
	\begin{equation}\label{eq:Euler-Cauchy_general}
	\begin{cases}-\partial_x\left(p(x)\partial_xu(x)\right) +q(x)u(x)  = \lambda w(x)u(x) & x\in (a,b),\\
	\sigma_1 u(a) - \sigma_2 p(a)\partial_xu(x)_{|x=a} =0 & \sigma_1^2+\sigma_2^2 >0,\\
	\zeta_1 u(b) +\zeta_2 p(b)\partial_xu(x)_{|x=b} =0 & \zeta_1^2+\zeta_2^2 >0,
	\end{cases}
	\end{equation}
	such that 
	\begin{enumerate}[(i)]
		\item $p,p',w,w',q,(pw)',(pw)'' \in C([a,b])$;
		\item $p,w>0$;
		\item $\sigma_1^2+\sigma_2^2 >0$, $\zeta_1^2+\zeta_2^2>0$.
	\end{enumerate}
	Discretize the above Problem \eqref{eq:Euler-Cauchy_general} by means of a numerical matrix method, and let $\prescript{b}{a}{\mathcal{L}_{\textnormal{BCs},p,q,w}^{(n,\eta)}}$ be the correspondent discrete operator, where $n$ is the mesh finesse parameter and $\eta$ is the order of approximation of the numerical method. Define $B=\int_a^b \sqrt{\frac{w(x)}{p(x)}}dx$. If:
	\begin{enumerate}[(a)]
		\item for every fixed $k\in \N$, 
		\begin{equation*}
		\lim_{n \to \infty} \lambda_k\left( \prescript{b}{a}{\mathcal{L}_{\textnormal{BCs},p(x),q(x),w(x)}^{(n,\eta)}}\right) = \lambda_k\left(\prescript{b}{a}{\mathcal{L}_{\textnormal{BCs},p(x),q(x),w(x)}}\right);
		\end{equation*}
		\item there exists $\omega : [a,b]\times [0,\pi] \to \R$, $\omega \in L^1\left([a,b]\times [0,\pi]\right)$ such that
		\begin{equation*}
		\left\{(n+1)^{-2}\prescript{b}{a}{\mathcal{L}_{\textnormal{BCs},p(x),q(x),w(x)}^{(n,\eta)}}\right\}_n \sim_{\lambda}\omega(x,\theta) \quad (x,\theta) \in [a,b]\times [0,\pi];
		\end{equation*}
		\item\label{item:tilde_omega_asymptotics} for every $\eta$, the monotone rearrangement $\tilde{\omega}_\eta$, as defined in \eqref{eq:rearrangment}, is such that
		\begin{equation*}
		\tilde{\omega}_\eta(x) \sim \frac{x^2\pi^2}{B^2}, \qquad \mbox{as } x\to 0.
		\end{equation*} 
	\end{enumerate}
	Then, for every fixed $k\in \N$ 
	$$
	\lim_{n \to \infty} \left|\frac{\tilde{\omega}_\eta\left(\frac{k}{n+1}\right)}{\lambda_k\left(\prescript{b}{a}{\hat{\mathcal{L}}_{\textnormal{BCs},p(x),q(x),w(x)}^{(n,\eta)}} \right)} -1 \right|=	\lim_{n \to \infty} \left|\frac{(n+1)^2\tilde{\omega}_\eta\left(\frac{k}{n+1}\right)}{\lambda_k\left(\prescript{b}{a}{\mathcal{L}_{\textnormal{BCs},p(x),q(x),w(x)}^{(n,\eta)}} \right)} -1 \right| =c_k\geq 0, \qquad \lim_{k \to \infty} c_k =0,
	$$
	with $c_k$ independent of $\eta$. The inequality is strict, i.e., $c_k >0$ for every $k$ such that
	$$
	\lambda_k\left(\prescript{B}{0}{\mathcal{L}_{\textnormal{BCs},V}}\right) \neq  \lambda_k\left(-\prescript{B}{0}{\Delta_{\textnormal{dir}}}\right),
	$$
	where $\prescript{B}{0}{\mathcal{L}_{\textnormal{BCs},V}}$ is the differential operator associated to the normal form \eqref{eq:S-L_normal_eig_prob} of Problem \eqref{eq:Euler-Cauchy_general} by the Liouville transform \eqref{eq:liouville_transform}, and $-\prescript{B}{0}{\Delta_{\textnormal{dir}}}$ is the (negative) Laplacian operator with Dirichlet BCs over the domain $[0,B]$.
	
	In particular,
	\begin{equation*}
	\lim_{n \to \infty}\analerr = c_k, 
	\end{equation*}
	where $\analerr$ is the analytical error defined in Definition \ref{def:num_anal_error}.
\end{proposition}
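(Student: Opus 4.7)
The idea is to compute the common limit by separating the numerator and the denominator of the quotient, identifying each as $n\to\infty$ with a quantity depending only on the continuous operator $\mathcal{L}$, and then invoking the Liouville transform together with Weyl's law.

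First, the equality of the two limits is automatic: by construction $\hat{\mathcal{L}}^{(n,\eta)}=(n+1)^{-2}\mathcal{L}^{(n,\eta)}$, hence $\lambda_k(\hat{\mathcal{L}}^{(n,\eta)})=(n+1)^{-2}\lambda_k(\mathcal{L}^{(n,\eta)})$ and the two ratios inside the absolute values coincide verbatim. Next, I would exploit assumption (c): fixing $k$ and letting $n\to\infty$, the point $k/(n+1)\to 0$, so
\begin{equation*}
(n+1)^{2}\,\tilde{\omega}_{\eta}\!\left(\tfrac{k}{n+1}\right)\;=\;\frac{(n+1)^{2}}{(n+1)^{2}B^{2}}\,k^{2}\pi^{2}\,(1+o(1))\;\longrightarrow\;\frac{k^{2}\pi^{2}}{B^{2}},
\end{equation*}
a limit that is patently free of $\eta$. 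Pairing this with assumption (a), which states $\lambda_{k}(\mathcal{L}^{(n,\eta)})\to\lambda_{k}(\mathcal{L})$ for the continuous operator $\mathcal{L}=\prescript{b}{a}{\mathcal{L}_{\textnormal{BCs},p,q,w}}$, I obtain
\begin{equation*}
c_{k}\;=\;\left|\frac{k^{2}\pi^{2}/B^{2}}{\lambda_{k}(\mathcal{L})}-1\right|,
\end{equation*}
independent of $\eta$ as claimed.

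Second, to interpret $c_{k}$, I would invoke the Liouville change of variables recalled in Section~\ref{sec:model_equation}, which preserves eigenvalues and thus gives $\lambda_{k}(\mathcal{L})=\lambda_{k}(\prescript{B}{0}{\mathcal{L}_{\textnormal{BCs},V}})$; since the pure Dirichlet Laplacian on $[0,B]$ satisfies $\lambda_{k}(-\prescript{B}{0}{\Delta_{\textnormal{dir}}})=k^{2}\pi^{2}/B^{2}$, I rewrite
\begin{equation*}
c_{k}\;=\;\left|\frac{\lambda_{k}(-\prescript{B}{0}{\Delta_{\textnormal{dir}}})}{\lambda_{k}(\prescript{B}{0}{\mathcal{L}_{\textnormal{BCs},V}})}-1\right|,
\end{equation*}
from which strict positivity is equivalent to $\lambda_{k}(\prescript{B}{0}{\mathcal{L}_{\textnormal{BCs},V}})\neq\lambda_{k}(-\prescript{B}{0}{\Delta_{\textnormal{dir}}})$, as stated. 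The asymptotic vanishing $c_{k}\to 0$ is then a direct consequence of the Weyl estimate \eqref{eq:weyl_asymptotic}, which under hypotheses (i)--(iii) yields $\lambda_{k}(\mathcal{L})\sim k^{2}\pi^{2}/B^{2}$ as $k\to\infty$; dividing numerator and denominator inside $c_{k}$ by $k^{2}\pi^{2}/B^{2}$ concludes the step.

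Finally, for the last assertion on $\analerr$, I would combine the above with the two standard approximation facts recalled in the paper: $\|\tilde{\omega}_{r(n)}-\tilde{\omega}\|_{\infty}\to 0$ as $n\to\infty$ from Subsection~\ref{ss:rearrangment}, and the fact that for $n'=n'(n)$ sufficiently large $\lambda_{k}(\mathcal{L}^{(n')})$ is arbitrarily close to $\lambda_{k}(\mathcal{L})$ by hypothesis (a). These two perturbations propagate continuously through the ratio and thus $\analerr$ shares the same limit $c_{k}$. The only real delicacy of the argument is keeping track of the scaling factor $(n+1)^{2}$: since both $\tilde{\omega}_{\eta}(k/(n+1))$ and $\lambda_{k}(\hat{\mathcal{L}}^{(n,\eta)})$ decay like $n^{-2}$, the quotient is of order one and the above analysis pins down its exact limit. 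Verifying hypothesis (c) for a concrete scheme is typically the nontrivial task in applications (it reflects the quadratic tail of $f_{\eta}$ near $\theta=0$ from Corollaries~\ref{cor:FD_uniform} and~\ref{cor:Galerkin_uniform}, combined with a Jacobian computation for the monotone rearrangement), but here it is an assumption, making the proof itself a direct assembly of Weyl's law, the Liouville normal form, and the near-zero behavior of $\tilde{\omega}_{\eta}$.
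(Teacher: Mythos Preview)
Your proposal is correct and follows essentially the same approach as the paper: compute the limit of the numerator via hypothesis (c), the limit of the denominator via hypothesis (a), identify $k^2\pi^2/B^2$ with $\lambda_k(-\prescript{B}{0}{\Delta_{\textnormal{dir}}})$ and $\lambda_k(\mathcal{L})$ with $\lambda_k(\prescript{B}{0}{\mathcal{L}_{\textnormal{BCs},V}})$ via the Liouville transform, and conclude $c_k\to 0$ by Weyl's asymptotic. Your write-up is in fact slightly more explicit than the paper's, in that you spell out the exact formula $c_k=\left|k^2\pi^2/(B^2\lambda_k(\mathcal{L}))-1\right|$ and address the final claim on $\analerr$, which the paper's proof leaves implicit.
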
  
\begin{proof}
By hypothesis, for every fixed $k$
$$
\lim_{n \to \infty}\lambda_k\left(\prescript{b}{a}{\mathcal{L}_{\textnormal{BCs},p(x),q(x),w(x)}^{(n,\eta)}}\right) = \lambda_k\left(\prescript{b}{a}{\mathcal{L}_{\textnormal{BCs},p(x),q(x),w(x)}}\right), 
$$ 
where $\lambda_k\left(\prescript{b}{a}{\mathcal{L}_{\textnormal{BCs},p(x),q(x),w(x)}}\right)$ are the eigenvalues of the continuous differential operator, and 
$$
\lambda_k\left(\prescript{b}{a}{\mathcal{L}_{\textnormal{BCs},p(x),q(x),w(x)}}\right) = \lambda_k\left(\prescript{B}{0}{\mathcal{L}_{\textnormal{BCs},V(y)}}\right),
$$
where $\prescript{B}{0}{\mathcal{L}_{\textnormal{BCs},V(y)}}$ is the differential operator associated to the normal form \eqref{eq:S-L_normal_eig_prob} of Problem \eqref{eq:Euler-Cauchy_general} through the Liouville transform. Let us observe that for $V(y)\equiv 0$ and Dirichlet BCs, then
$$
\lambda_k\left(\prescript{B}{0}{\mathcal{L}_{\textnormal{dir},V\equiv 0}}\right) =  \lambda_k\left(-\prescript{B}{0}{\Delta_{\textnormal{dir}}}\right)=\frac{k^2\pi^2}{B^2}.
$$
Therefore, from item \eqref{item:tilde_omega_asymptotics}
\begin{equation}
\lim_{n \to \infty} (n+1)^2\tilde{\omega}_\eta\left(\frac{k}{n+1}\right) = \frac{k^2\pi^2}{B^2} = \lambda_k\left(-\prescript{B}{0}{\Delta_{\textnormal{dir}}}\right).
\end{equation}
Then it is immediate to prove that if
$$
\lambda_k\left(\prescript{B}{0}{\mathcal{L}_{\textnormal{BCs},V(y)}}\right) \neq  \lambda_k\left(-\prescript{B}{0}{\Delta_{\textnormal{dir}}}\right),
$$
then
$$
\lim_{n \to \infty} \left|\frac{(n+1)^2\tilde{\omega}_\eta\left(\frac{k}{n+1}\right)}{\lambda_k\left(\prescript{b}{a}{\mathcal{L}_{\textnormal{BCs},p(x),q(x),w(x)}^{(n,\eta)}} \right)} -1 \right| =c_k>0.
$$
Moreover,
$$
\lambda_k\left(\prescript{B}{0}{\mathcal{L}_{\textnormal{BCs},V(y)}}\right) \sim \frac{k^2\pi^2}{B^2} \quad \mbox{for } k \to \infty,
$$
and then $c_k \to 0$ as $k \to \infty$.
\end{proof}

\begin{corollary}\label{cor:item_c}
If
\begin{equation*}
\omega_\eta (x,\theta)= \frac{p(x)}{w(x)(b-a)^2}f_\eta(\theta), \qquad (x,\theta) \in [a,b]\times [0,\pi],
\end{equation*}
with $f_\eta(\theta)$ nonnegative, nondecreasing and such that $f_\eta(\theta) \sim \theta^2$ as $\theta\to 0$,
then item (\ref{item:tilde_omega_asymptotics}) of Proposition \ref{prop:non_good_approximation} is satisfied.
\end{corollary}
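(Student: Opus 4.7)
\medskip

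\noindent\textit{Proof proposal.} The plan is to compute the small-$t$ asymptotics of the function
\[
\phi(t) = \frac{1}{\pi(b-a)}\mu_2\bigl\{(x,\theta)\in [a,b]\times[0,\pi]\,:\,\omega_\eta(x,\theta)\le t\bigr\}
\]
defined in \eqref{eq:rearrangment2}, and then invert. Since $\omega_\eta(x,\theta)=\frac{p(x)}{w(x)(b-a)^2}f_\eta(\theta)$ with $f_\eta$ nondecreasing on $[0,\pi]$, for each fixed $x$ the condition $\omega_\eta(x,\theta)\le t$ is equivalent to $f_\eta(\theta)\le \frac{t(b-a)^2 w(x)}{p(x)}$. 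Because $p,w$ are continuous and strictly positive on the compact interval $[a,b]$, the quotient $w(x)/p(x)$ is bounded, hence for $t$ small enough the right-hand side is less than $f_\eta(\pi)$ uniformly in $x$. By Fubini, this gives
\[
\pi(b-a)\,\phi(t) \;=\; \int_a^b f_\eta^{-1}\!\left(\frac{t(b-a)^2 w(x)}{p(x)}\right)\,dx,
\]
where $f_\eta^{-1}$ denotes the generalized inverse of the nondecreasing map $f_\eta$.

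The next step is to exploit the hypothesis $f_\eta(\theta)\sim\theta^2$ as $\theta\to 0$, which by monotonicity transfers to $f_\eta^{-1}(s)\sim \sqrt{s}$ as $s\to 0^+$. Explicitly, for every $\epsilon>0$ there is $\delta>0$ such that $(1-\epsilon)\sqrt{s}\le f_\eta^{-1}(s)\le (1+\epsilon)\sqrt{s}$ for all $s\in[0,\delta]$. Since $w(x)/p(x)$ is bounded on $[a,b]$, choosing $t$ small enough places the argument $t(b-a)^2 w(x)/p(x)$ uniformly in $[0,\delta]$, so one obtains
\[
(1-\epsilon)\int_a^b (b-a)\sqrt{t}\sqrt{\tfrac{w(x)}{p(x)}}\,dx \;\le\; \pi(b-a)\,\phi(t)\;\le\; (1+\epsilon)\int_a^b (b-a)\sqrt{t}\sqrt{\tfrac{w(x)}{p(x)}}\,dx.
\]
Recognising $\int_a^b\sqrt{w(x)/p(x)}\,dx=B$ and letting $\epsilon\to 0$ gives the asymptotics
\[
\phi(t)\;\sim\;\frac{B}{\pi}\sqrt{t}\qquad\text{as }t\to 0^+.
\]

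Finally, since $\tilde\omega_\eta=\phi^{-1}$ by construction \eqref{eq:rearrangment}, inverting this equivalence yields $\tilde\omega_\eta(x)\sim x^2\pi^2/B^2$ as $x\to 0^+$, which is exactly condition \eqref{item:tilde_omega_asymptotics} of Proposition~\ref{prop:non_good_approximation}. The only delicate point is the uniform-in-$x$ control needed to pass the $\sqrt{t}$ outside the integral; this is handled by the boundedness away from zero and infinity of $w/p$ on $[a,b]$, guaranteed by the regularity assumptions (i)--(ii) of Proposition~\ref{prop:non_good_approximation}, which ensures that the approximation of $f_\eta^{-1}$ by $\sqrt{\cdot}$ applies simultaneously at every $x\in[a,b]$ for $t$ sufficiently small. \hfill$\square$
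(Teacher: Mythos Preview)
Your proof is correct and follows essentially the same approach as the paper's: compute the distribution function $\phi$ via Fubini, use the $\epsilon$--$\delta$ form of $f_\eta(\theta)\sim\theta^2$ together with the uniform boundedness of $w/p$ on $[a,b]$ to obtain $\phi(t)\sim (B/\pi)\sqrt{t}$, and then invert. The only cosmetic difference is that the paper sandwiches the sublevel sets $\Omega_y(t)=\{\theta:f_\eta(\theta)\le (b-a)^2 w(y)t/p(y)\}$ directly between the sets $\Omega_y^\pm(t)=\{\theta:\theta^2\le (b-a)^2 w(y)t/[(1\pm\epsilon)p(y)]\}$, whereas you phrase the same estimate through the generalized inverse $f_\eta^{-1}$ and its asymptotics $f_\eta^{-1}(s)\sim\sqrt{s}$; these are equivalent formulations of the same bound.
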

\begin{proof}
From \eqref{eq:rearrangment2} and \eqref{eq:rearrangment}, for all $t \in \left[0, t_0\right]$, with
$$
t_0 = (b-a)^{-2}\min_{[a,b]}\left(p(y)/w(y)\right)\sup_{[0,\pi]}\left(f_\eta(\theta)\right), 
$$
we have that 
\begin{equation*}
\phi_\eta (t) = \frac{1}{\pi(b-a)} \int_a^b \left( \int_0^\pi \mathbbm{1}_{\Omega_y(t)} (\theta) d\theta\right)dy,  \quad \mbox{with }\Omega_y(t):=\left\{\theta \in [0,\pi] \, : \, 0\leq f_\eta(\theta)\leq \frac{(b-a)^2w(y)}{p(y)}t\right\}.
\end{equation*}
By the monotonicity of $f_\eta$, it holds that $\theta \to 0$ as $t \to 0$. For every $\epsilon>0$ there exists $\delta_\epsilon>0$ such that for every $t\in [0,\min\{t_0;t_0/\delta_\epsilon\}]$ then $(1-\epsilon)\theta^2< f_\eta(\theta)< (1+\epsilon)\theta^2$, and so
\begin{equation*}
\frac{1}{\pi(b-a)} \int_a^b \mu_1\left( \Omega_y^{+}(t) \right)dx \leq \phi_\eta (t) \leq \frac{1}{\pi(b-a)} \int_a^b \mu_1\left( \Omega_y^{-}(t) \right)dx,
\end{equation*}
with
\begin{equation*}
\Omega_y^{+}(t):=\left\{\theta \in [0,\pi] \, : \, \theta^2\leq \frac{(b-a)^2}{1+\epsilon}\frac{w(y)}{p(y)}t\right\}, \quad \Omega_y^{-}(t):=\left\{\theta \in [0,\pi] \, : \, \theta^2\leq \frac{(b-a)^2}{1-\epsilon}\frac{w(y)}{p(y)}t\right\}.
\end{equation*}
So we have that
\begin{equation*}
\frac{B}{\pi\sqrt{1+\epsilon}}\sqrt{t} \leq \phi_\eta(t) \leq \frac{B}{\pi\sqrt{1-\epsilon}}\sqrt{t}, \qquad B=\int_a^b \sqrt{\frac{w(y)}{p(y)}}dy.
\end{equation*}
By definition \eqref{eq:rearrangment}, $t\to 0$ as $x\to0$ and then it holds that
\begin{equation*}
(1-\epsilon)\frac{x^2\pi^2}{B^2}\leq \tilde{\omega}_\eta (x) \leq (1+\epsilon)\frac{x^2\pi^2}{B^2} \qquad \mbox{for }x \mbox{ small enough},
\end{equation*}
and the thesis follows.
\end{proof}

\begin{remark}\label{rem:not_good_approximation}
The matrix methods of subsections \ref{ssec:FD}, \ref{ssec:IsoG} satisfy the hypothesis of Proposition \ref{prop:non_good_approximation}, see theorems \ref{thm:FD_symbol}, \ref{thm:Galerkin_symbol} and corollaries \ref{cor:FD_uniform}, \ref{cor:Galerkin_uniform}. Therefore, in general, a uniform sampling of their spectral symbols does not provide an accurate approximation of the eigenvalues $\lambda_k\left( \prescript{b}{a}{\mathcal{L}_{\textnormal{BCs},p(x),q(x),w(x)}^{(n,\eta)}}\right)$ and $\lambda_k\left(\prescript{b}{a}{\mathcal{L}_{\textnormal{BCs},p(x),q(x),w(x)}}\right)$, in the sense of the relative error. See subsections \ref{ssec:example_uniform_3_points},\ref{ssec:FD_nonuniform} and \ref{ssec:galerkin} for numerical examples. On the other hand, 
$$
\left\|  \tilde{\omega}_\eta\left(\frac{k}{n+1}\right)  - (n+1)^{-2}\lambda_k\left( \prescript{b}{a}{\mathcal{L}_{\textnormal{BCs},p(x),q(x),w(x)}^{(n,\eta)}}\right)\right\|_\infty \to 0 \qquad \mbox{as } n\to \infty,
$$
if we exclude the outliers, see Corollary \ref{cor:discrete_Weyl_law} and figures \eqref{fig:eig_symbol_comparison} and \eqref{fig:comparison_L1}.
\end{remark}

\begin{theorem}\label{thm:necessary_cond_for_uniformity}
	Let us consider Problem \ref{eq:Euler-Cauchy_general} such that items (i)-(iv) of Proposition \ref{prop:non_good_approximation} are satisfied. Let $\prescript{b}{a}{\mathcal{L}_{\textnormal{BCs},p(x),q(x),w(x)}^{(n)}}$ be the discrete operator of Problem \ref{eq:Euler-Cauchy_general} obtained by means of any matrix method such that 
	\begin{enumerate}[(a)]
			\item \begin{equation*}
		\lim_{n \to \infty}	\lambda_k\left( \prescript{b}{a}{\mathcal{L}_{\textnormal{BCs},p(x),q(x),w(x)}^{(n)}}\right)=   \lambda_k\left( \prescript{b}{a}{\mathcal{L}_{\textnormal{BCs},p(x),q(x),w(x)}}\right) \qquad \mbox{for every fixed }k;
		\end{equation*}
		\item 
		$$
		\lambda_k \left( \prescript{b}{a}{\mathcal{L}_{\textnormal{BCs},p(x),q(x),w(x)}^{(n)}} \right) \in \R\qquad \mbox{for every } k=1,\dots,n;
		$$
		\item \begin{equation*}
		\left\{(n+1)^{-2} \prescript{b}{a}{\mathcal{L}_{\textnormal{BCs},p(x),q(x),w(x)}^{(n)}}\right\}_n \sim_{\lambda} \omega(x,\theta), \qquad (x,\theta) \in [a,b]\times [0,\pi],
		\end{equation*}
		with $\omega \in \textnormal{L}^\infty\left([a,b]\times [0,\pi]\right)$ and real;
		\item the monotone rearrangement $\tilde{\omega} : [0,1]\to [\min R_\omega, \max R_\omega]$, defined as in equation \eqref{eq:rearrangment}, is piecewise Lipschitz.
	\end{enumerate}
Then
	\begin{equation*}
	\lim_{n \to \infty}\left\{\max_{k=1,\ldots,n}\left| \frac{\lambda_k\left( \prescript{b}{a}{\mathcal{L}_{\textnormal{BCs},p(x),q(x),w(x)}^{(n)}}\right)}{\lambda_k\left( \prescript{b}{a}{\mathcal{L}_{\textnormal{BCs},p(x),q(x),w(x)}}\right)}  -1\right|\right\}  \geq \max_{x\in[0,1]}\left|\frac{\tilde{\omega}(x)}{x^2\pi^2/B^2} - 1 \right|.
	\end{equation*}
Moreover, if for $n$ large enough there are not outliers as in Definition \ref{def:outliers}, i.e., if for every $n\geq N$ 
	$$
	\nexists k \quad \mbox{such that}\quad \lambda_k\left( \prescript{b}{a}{\mathcal{L}_{\textnormal{BCs},p,q,w}^{(n)}}\right) \notin R_\omega,
	$$
then
	\begin{equation*}
	\lim_{n \to \infty}\left\{\max_{k=1,\ldots,n}\left| \frac{\lambda_k\left( \prescript{b}{a}{\mathcal{L}_{\textnormal{BCs},p(x),q(x),w(x)}^{(n)}}\right)}{\lambda_k\left( \prescript{b}{a}{\mathcal{L}_{\textnormal{BCs},p(x),q(x),w(x)}}\right)}  -1\right|\right\}  = \max_{x\in[0,1]}\left|\frac{\tilde{\omega}(x)}{x^2\pi^2/B^2} - 1  \right|.
	\end{equation*}
\end{theorem}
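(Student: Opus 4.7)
The plan is to reduce the whole theorem to a single pointwise convergence statement: for any $x\in(0,1]$ and any index sequence $k(n)$ with $k(n)/n\to x$,
\[
\frac{\lambda_{k(n)}\!\left(\prescript{b}{a}{\mathcal{L}_{\textnormal{BCs},p,q,w}^{(n)}}\right)}{\lambda_{k(n)}\!\left(\prescript{b}{a}{\mathcal{L}_{\textnormal{BCs},p,q,w}}\right)}\;\xrightarrow[n\to\infty]{}\;\frac{\tilde{\omega}(x)}{x^{2}\pi^{2}/B^{2}}.
\]
For the denominator I would use Weyl's asymptotic \eqref{eq:weyl_asymptotic}: because $x>0$ forces $k(n)\to\infty$, I can write $\lambda_{k(n)}(\mathcal{L})/(n+1)^{2}=(k(n)/(n+1))^{2}\cdot\lambda_{k(n)}(\mathcal{L})/k(n)^{2}\to x^{2}\pi^{2}/B^{2}$. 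For the numerator I would apply Theorem \ref{thm:discrete_Weyl_law} to $T^{(n)}=(n+1)^{-2}\prescript{b}{a}{\mathcal{L}^{(n)}}$, which is legitimate under hypotheses (c)--(d) and yields $\lambda_{k(n)}(\mathcal{L}^{(n)})/(n+1)^{2}\to\tilde{\omega}(x)$. The quotient of the two asymptotics gives the claim.

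From this single limit both inequalities follow. For the lower bound, let $M:=\max_{x\in[0,1]}|\tilde{\omega}(x)B^{2}/(x^{2}\pi^{2})-1|$. The map inside the maximum is continuous on $(0,1]$ by the piecewise Lipschitz regularity of $\tilde{\omega}$, so I can pick $x^{*}\in(0,1]$ whose value is arbitrarily close to $M$; choosing $k(n)=\lceil x^{*}n\rceil$ and applying the key limit gives $|\lambda_{k(n)}^{(n)}/\lambda_{k(n)}-1|\to|\tilde{\omega}(x^{*})B^{2}/(x^{*2}\pi^{2})-1|$, and since the $\max$ over all $k$ majorises this single-index quantity, letting $x^{*}$ vary yields the lower bound. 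For the reverse inequality under the no-outliers assumption, I would let $k_{n}$ realise the maximum at level $n$ and extract a subsequence (not relabelled) with $k_{n}/n\to x^{\#}\in[0,1]$; if $x^{\#}>0$ the key limit immediately gives a limit bounded by $M$, while if $x^{\#}=0$ with $\{k_{n}\}$ bounded, a further constant subsequence together with hypothesis (a) forces the relative error to $0$.

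The main obstacle is the residual case $x^{\#}=0$ with $k_{n}\to\infty$, where both $\lambda_{k_{n}}^{(n)}/(n+1)^{2}$ and $\lambda_{k_{n}}(\mathcal{L})/(n+1)^{2}$ vanish and only relative rates determine the limit. The key observation here is that the no-outliers hypothesis combined with (a) forces $\tilde{\omega}(0)=0$: otherwise $\lambda_{1}^{(n)}/(n+1)^{2}\geq\tilde{\omega}(0)>0$ would contradict $\lambda_{1}^{(n)}\to\lambda_{1}(\mathcal{L})<\infty$. Using this, the relative error along such a sequence is asymptotically $\tilde{\omega}(y_{n})B^{2}/(y_{n}^{2}\pi^{2})-1$ evaluated at $y_{n}=k_{n}/(n+1)\to 0^{+}$, and by the Lipschitz behaviour of $\tilde{\omega}$ near the origin any subsequential limit coincides with a value of the maximised function, hence is dominated by $M$. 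Making this last comparison rigorous---establishing that the piecewise Lipschitz control at $0$ suffices to prevent a boundary blow-up exceeding $M$---is the technical heart of the argument.
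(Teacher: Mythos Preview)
Your overall strategy coincides with the paper's: establish the pointwise limit
\[
\frac{\lambda_{k(n)}^{(n)}}{\lambda_{k(n)}(\mathcal{L})}\;\longrightarrow\;\frac{\tilde{\omega}(x)}{x^{2}\pi^{2}/B^{2}}
\]
whenever $k(n)/n\to x\in(0,1]$, by combining the discrete Weyl law (Theorem~\ref{thm:discrete_Weyl_law}) for the numerator with the continuous Weyl asymptotic~\eqref{eq:weyl_asymptotic} for the denominator, and then read off the lower bound by choosing $k(n)$ appropriately. This is exactly the paper's argument.

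There is, however, a subtle difference in how $k(n)$ is produced for the \emph{lower bound}, and your version has a small gap there. You set $k(n)=\lceil x^{*}n\rceil$ and invoke \eqref{eq:discrete_Weyl_law2_1} to get $(n+1)^{-2}\lambda_{k(n)}^{(n)}\to\tilde{\omega}(x^{*})$. But \eqref{eq:discrete_Weyl_law2_1} carries the side hypothesis $\lambda_{k(n)}(T^{(n)})\in R_{\omega}$ definitely, and in the first part of the theorem you are \emph{not} assuming absence of outliers; nothing prevents your explicitly chosen index $\lceil x^{*}n\rceil$ from landing on an outlier infinitely often. The paper sidesteps this by reversing the order: it first invokes spectral attraction (Theorem~\ref{thm:clustering&spectral_attraction}) to select, for each $n$, the eigenvalue closest to $\tilde{\omega}(x)$---call its index $k(n)$---so that $(n+1)^{-2}\lambda_{k(n)}^{(n)}\to\tilde{\omega}(x)$ holds unconditionally, and only \emph{then} uses the counting relation~\eqref{eq:discrete_Weyl_law1} to deduce $k(n)/n\to x$. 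Your approach can be repaired (e.g.\ perturb $k(n)$ by at most $o(n)$ to avoid the $o(n)$ outliers guaranteed by weak clustering), but as written the appeal to Theorem~\ref{thm:discrete_Weyl_law} is not yet justified in the general case.

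For the equality under no outliers, you actually supply more detail than the paper, which simply asserts that ``the thesis follows at once''. Your subsequence extraction on the maximisers and the case split on $x^{\#}$ is the natural route; your observation that no outliers plus hypothesis~(a) forces $\tilde{\omega}(0)=0$ is correct and is the key to the boundary case. The honest acknowledgement that the $k_{n}\to\infty$, $k_{n}/n\to 0$ regime is the delicate point is well placed: the asymptotic~\eqref{eq:discrete_Weyl_law2_1} is a ratio statement, and when both sides tend to zero one must argue more carefully that $(n+1)^{-2}\lambda_{k_{n}}^{(n)}\big/\tilde{\omega}(k_{n}/(n+1))\to 1$ still holds. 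The paper does not spell this out either.
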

\begin{proof}
Since $\omega$ is bounded then $R_\omega$ is compact and $\min R_\omega, \max R_\omega$ are well-defined. Let $x \in [0,1]$.

Without loss of generality, let us suppose moreover that $\lambda_k\left(\prescript{b}{a}{\mathcal{L}_{\textnormal{BCs},p(x),q(x),w(x)}^{(n)}}\right)$ are distinct for every $k$, 
$$
\lambda_1\left(\prescript{b}{a}{\mathcal{L}_{\textnormal{BCs},p(x),q(x),w(x)}^{(n)}}\right) < \lambda_2\left(\prescript{b}{a}{\mathcal{L}_{\textnormal{BCs},p(x),q(x),w(x)}^{(n)}}\right)<\ldots < \lambda_n\left(\prescript{b}{a}{\mathcal{L}_{\textnormal{BCs},p(x),q(x),w(x)}^{(n)}}\right),
$$
so we can uniquely define the permutation index function $\sigma_n : \left\{1,\ldots, n \right\} \to \left\{1,\ldots, n \right\}$ such that the new index $\sigma_n(k)$ is reordered according to the distance of $(n+1)^{-2}\lambda_k\left(\prescript{b}{a}{\mathcal{L}_{\textnormal{BCs},p(x),q(x),w(x)}^{(n)}}\right)$ from $\tilde{\omega}(x)$, in ascending order. Namely, $\sigma_n(k)=j_n$ such that
$$
\lambda_k\left(\prescript{b}{a}{\mathcal{L}_{\textnormal{BCs},p(x),q(x),w(x)}^{(n)}}\right) = \bar{\lambda}_{j_n}\left(\prescript{b}{a}{\mathcal{L}_{\textnormal{BCs},p(x),q(x),w(x)}^{(n)}}\right) \qquad \mbox{with } j_n\in \{1,\ldots, n\},
$$
where 
$$
\left\{\bar{\lambda}_1\left(\prescript{b}{a}{\mathcal{L}_{\textnormal{BCs},p,q,w}^{(n)}}\right),\ldots, \bar{\lambda}_n\left(\prescript{b}{a}{\mathcal{L}_{\textnormal{BCs},p,q,w}^{(n)}}\right)\right\} \equiv \left\{\lambda_1\left(\prescript{b}{a}{\mathcal{L}_{\textnormal{BCs},p,q,w}^{(n)}}\right),\ldots, \lambda_n\left(\prescript{b}{a}{\mathcal{L}_{\textnormal{BCs},p,q,w}^{(n)}}\right)\right\}
$$
and 
$$
\cdots< \left|\frac{\bar{\lambda}_{j_n-1}\left(\prescript{b}{a}{\mathcal{L}_{\textnormal{BCs},p,q,w}^{(n)}}\right)}{(n+1)^2} - \tilde{\omega}(x) \right|<  \left|\frac{\bar{\lambda}_{j_n}\left(\prescript{b}{a}{\mathcal{L}_{\textnormal{BCs},p,q,w}^{(n)}}\right)}{(n+1)^{2}} - \tilde{\omega}(x) \right|< \left|\frac{\bar{\lambda}_{j_n+1}\left(\prescript{b}{a}{\mathcal{L}_{\textnormal{BCs},p,q,w}^{(n)}}\right)}{(n+1)^{2}} - \tilde{\omega}(x) \right|<\cdots.
$$
For every $n$, choose $k=k(n)$ such that $\sigma_n(k)=1$. By Theorem \ref{thm:clustering&spectral_attraction} we have that
\begin{equation*}
\lim_{n \to \infty} \frac{\lambda_{k(n)}\left(\prescript{b}{a}{\mathcal{L}_{\textnormal{BCs},p(x),q(x),w(x)}^{(n)}}\right)}{(n+1)^2}=\frac{\bar{\lambda}_{1}\left(\prescript{b}{a}{\mathcal{L}_{\textnormal{BCs},p(x),q(x),w(x)}^{(n)}}\right)}{(n+1)^2}= \tilde{\omega}(x), 
\end{equation*}
and by Theorem \ref{thm:discrete_Weyl_law} it holds that
\begin{equation*}
\lim_{n \to \infty} \frac{k(n)}{n} = \lim_{n \to \infty} \frac{\left|\left\{ i=1,\ldots,n \, : \, \frac{\lambda_i\left(\prescript{b}{a}{\mathcal{L}_{\textnormal{BCs},p(x),q(x),w(x)}^{(n)}}\right)}{(n+1)^2}\leq \tilde{\omega}(x) \right\}   \right|}{n} = x.
\end{equation*}
Therefore,
\begin{align*}
\lim_{n \to \infty} \frac{\lambda_{k(n)}\left(\prescript{b}{a}{\mathcal{L}_{\textnormal{BCs},p,q,w}^{(n)}}\right)}{\lambda_{k(n)}\left(\prescript{b}{a}{\mathcal{L}_{\textnormal{BCs},p,q,w}}\right)}&= \lim_{n \to \infty} \frac{\lambda_{k(n)}\left(\prescript{b}{a}{\mathcal{L}_{\textnormal{BCs},p,q,w}^{(n)}}\right)/(n+1)^2}{\lambda_{k(n)}\left(\prescript{b}{a}{\mathcal{L}_{\textnormal{BCs},p,q,w}}\right)/k(n)^2}\cdot \frac{(n+1)^2}{k(n)^2}\\
&=\frac{\tilde{\omega}(x)}{\pi^2/B^2}\cdot \frac{1}{x^2},
\end{align*}
and then for every $\epsilon>0$
\begin{equation*}
\max_{k=1,\ldots,n}\left| \frac{\lambda_k\left( \prescript{b}{a}{\mathcal{L}_{\textnormal{BCs},p(x),q(x),w(x)}^{(n)}}\right)}{\lambda_k\left( \prescript{b}{a}{\mathcal{L}_{\textnormal{BCs},p(x),q(x),w(x)}}\right)} \right|\geq (1-\epsilon) \max_{x\in[0,1]}\left|\frac{\tilde{\omega}(x)}{x^2\pi^2/B^2}\right| \qquad \mbox{definetely}.
\end{equation*}
The thesis follows at once.
\end{proof}

\begin{corollary}\label{cor:necessary_cond_for_uniformity}
In the same hypothesis of Theorem \ref{thm:necessary_cond_for_uniformity}, if
\begin{equation}\label{eq:unif_nec_cond}
\max_{x\in[0,1]}\left|\frac{\tilde{\omega}(x)}{x^2\pi^2/B^2} -1\right|>0
\end{equation}
then the numerical method used for the discretization of Problem \ref{eq:Euler-Cauchy_general} can not provide a relative uniform approximation of the eigenvalues of the continuous operator $ \prescript{b}{a}{\mathcal{L}_{\textnormal{BCs},p,q,w}}$. In particular, it is enough that
$$
\sup_{(x,\theta)\in[a,b]\times [0,\pi]}\omega(x,\theta)=\tilde{\omega}(1)\neq \frac{\pi^2}{B^2}.
$$
\end{corollary}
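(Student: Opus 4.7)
The plan is to deduce this corollary almost immediately from Theorem \ref{thm:necessary_cond_for_uniformity}, which already provides the key lower bound
\begin{equation*}
\liminf_{n \to \infty}\left\{\max_{k=1,\ldots,n}\left| \frac{\lambda_k\left( \prescript{b}{a}{\mathcal{L}_{\textnormal{BCs},p,q,w}^{(n)}}\right)}{\lambda_k\left( \prescript{b}{a}{\mathcal{L}_{\textnormal{BCs},p,q,w}}\right)}  -1\right|\right\}  \geq \max_{x\in[0,1]}\left|\frac{\tilde{\omega}(x)}{x^2\pi^2/B^2} - 1 \right|.
\end{equation*}
A numerical method provides a \emph{relative uniform approximation} of the spectrum of $\prescript{b}{a}{\mathcal{L}_{\textnormal{BCs},p,q,w}}$ precisely when the quantity on the left-hand side converges to zero as $n\to\infty$. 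Thus, if the right-hand side is strictly positive, the limit inferior (and therefore any limit) is bounded away from zero and relative uniform approximation fails. This gives the first assertion directly under hypothesis \eqref{eq:unif_nec_cond}.

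For the second, sufficient condition, the plan is to exhibit a specific point $x \in [0,1]$ at which $\left|\tilde\omega(x)/(x^2\pi^2/B^2)-1\right|>0$, thereby forcing \eqref{eq:unif_nec_cond}. I would take $x=1$. By the construction of the monotone rearrangement in \eqref{eq:rearrangment}--\eqref{eq:rearrangment2}, $\tilde{\omega}$ is nondecreasing and right-continuous with image equal (up to a null set) to $R_\omega$, and its supremum $\tilde{\omega}(1)$ coincides with $\max R_\omega = \sup_{(x,\theta)\in[a,b]\times[0,\pi]}\omega(x,\theta)$. Hence the hypothesis $\tilde{\omega}(1)\neq \pi^2/B^2$ translates into
\begin{equation*}
\left|\frac{\tilde{\omega}(1)}{1^2\cdot \pi^2/B^2}-1\right|=\left|\frac{B^2\tilde{\omega}(1)}{\pi^2}-1\right|>0,
\end{equation*}
which in turn implies $\max_{x\in[0,1]}\left|\tilde{\omega}(x)/(x^2\pi^2/B^2)-1\right|>0$, so the first part of the corollary applies.

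The proof is thus essentially a one-line invocation of the theorem, plus the elementary identification $\tilde{\omega}(1)=\sup \omega$. There is no real technical obstacle; the only point worth a moment's care is justifying that $\tilde{\omega}(1)=\sup R_\omega=\sup\omega$ (i.e.\ that the essential supremum of the symbol really equals the sup of its monotone rearrangement at the right endpoint), which follows immediately from Definition \ref{def:essential_range} and the defining formulas \eqref{eq:rearrangment}--\eqref{eq:rearrangment2} for $\tilde{\omega}$.
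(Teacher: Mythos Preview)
Your proposal is correct and follows exactly the same approach as the paper, which simply states that the corollary is immediate from Theorem \ref{thm:necessary_cond_for_uniformity}. Your added remarks on why $\tilde{\omega}(1)=\sup R_\omega$ and the evaluation at $x=1$ merely spell out what the paper leaves implicit.
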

\begin{proof}
Immediate from Theorem \ref{thm:necessary_cond_for_uniformity}.
\end{proof}

\begin{corollary}
In the same hypothesis of Theorem \ref{thm:necessary_cond_for_uniformity}, for every diffeomorphism $\tau : [a,b] \to [a,b]$, the numerical methods in subsections \ref{ssec:FD}, \ref{ssec:IsoG} verify condition \eqref{eq:unif_nec_cond}, for any fixed order of approximation $\eta$.
\end{corollary}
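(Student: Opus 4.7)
By Corollary \ref{cor:necessary_cond_for_uniformity}, it suffices to show that $\tilde\omega_\eta(x)\ne x^{2}\pi^{2}/B^{2}$ for at least one $x\in[0,1]$. The strategy is to argue by contradiction: assuming equality throughout $[0,1]$, I exploit the factorization $\omega_\eta(x,\theta)=G(x)f_\eta(\theta)$ from Theorems \ref{thm:FD_symbol} and \ref{thm:Galerkin_symbol}, where
$$G(x):=\frac{p(\tau(x))}{w(\tau(x))(\tau'(x))^{2}(b-a)^{2}}>0 \quad \mbox{on } [a,b],$$
to translate the assumption into an integral identity between $G$ and $f_\eta^{-1}$, from which a contradiction will follow by Taylor expansion at $\theta=0$.

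Since $G$ is continuous and positive and $f_\eta$ is continuous and nondecreasing on $[0,\pi]$ with $f_\eta(0)=0$, the CDF $\phi_\eta$ of \eqref{eq:rearrangment2} is continuous and strictly increasing, so $\tilde\omega_\eta=\phi_\eta^{-1}$ and the assumption $\tilde\omega_\eta(x)=x^{2}\pi^{2}/B^{2}$ reads $\phi_\eta(t)=B\sqrt{t}/\pi$ on $[0,\pi^2/B^2]$. Using the product structure of $\omega_\eta$ and the monotonicity of $f_\eta$, the CDF admits the explicit form $\phi_\eta(t)=(\pi(b-a))^{-1}\int_a^b f_\eta^{-1}(t/G(x))dx$ for every $t\in[0,\delta]$ with $\delta:=f_\eta(\pi)\min_{[a,b]}G>0$, so the assumption becomes the functional identity
$$\int_a^b f_\eta^{-1}\!\left(\frac{t}{G(x)}\right)dx=B(b-a)\sqrt{t}, \qquad t\in[0,\delta].$$

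I then exploit the real-analyticity of $f_\eta$ together with the fact that $f_\eta\not\equiv\theta^{2}$. Indeed, in the FD case, $f_\eta$ from \eqref{FD_coefficients} is a trigonometric polynomial, hence entire and $2\pi$-periodic; in the IgA case, by Theorem \ref{thm:Galerkin_symbol} it is a rational function of cosines with strictly positive denominator on $[0,\pi]$, hence real-analytic and bounded there. Since $\theta^{2}$ is neither periodic nor bounded, $f_\eta$ cannot agree with $\theta^{2}$ on any open interval, so analyticity forces
$$f_\eta(\theta)=\theta^{2}\bigl(1+c_k\theta^{2k}+O(\theta^{2k+2})\bigr), \qquad k\geq 1,\ c_k\ne 0.$$
A direct inversion gives $f_\eta^{-1}(u)=\sqrt{u}-(c_k/2)\,u^{k+1/2}+O(u^{2k+1/2})$ as $u\to 0^{+}$. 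Termwise integration in $x$, justified by $\min_{[a,b]}G>0$, combined with the identity $\int_a^b G(x)^{-1/2}dx=(b-a)B$ obtained via the change of variable $y=\tau(x)$, reduces the functional equation to
$$-\frac{c_k}{2}\,t^{k+1/2}\int_a^b G(x)^{-k-1/2}\,dx+O(t^{2k+1/2})=0, \qquad t\in[0,\delta].$$
Dividing by $t^{k+1/2}$ and letting $t\to 0^{+}$ yields $c_k\int_a^b G(x)^{-k-1/2}dx=0$, a contradiction because $c_k\ne 0$ and $G>0$ on $[a,b]$.

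The main technical difficulty lies in this last step: the rigorous matching of Taylor coefficients on the two sides of the functional equation under the integral sign. This rests on the real-analyticity of $f_\eta^{-1}$ near $0$ and on the uniform positivity of $G$ over the compact interval $[a,b]$, which together ensure that the power-series representation of $f_\eta^{-1}(t/G(x))$ converges absolutely and uniformly in $x$ for $t$ in a neighborhood of the origin, so that both sides of the identity are real-analytic in $\sqrt{t}$ and their coefficients may be compared.
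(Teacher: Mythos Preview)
Your proof is correct and takes a genuinely different route from the paper's. The paper argues that $\tilde\omega_\eta$ cannot equal $x^2\pi^2/B^2$ by looking at the \emph{transition point} $x_0=\phi_\eta(t_0)$ with $t_0=f_\eta(\pi)\min_{[a,b]}G$: for $t$ beyond $t_0$ the level-set formula for $\phi_\eta$ acquires an extra piece (the set of $x$ where $t/G(x)\ge f_\eta(\pi)$), and a direct computation in the spirit of Corollary~\ref{cor:item_c} shows that $\tilde\omega_\eta''$ is not constant across $x_0$, contradicting $\tilde\omega_\eta(x)=cx^2$. Your argument instead works entirely in a neighbourhood of the origin: you reduce the hypothetical identity to $\int_a^b f_\eta^{-1}(t/G(x))\,dx=B(b-a)\sqrt t$ for small $t$, observe that $f_\eta^{-1}(u)=\sqrt u\,g(u)$ with $g$ real-analytic and $g\not\equiv 1$ (since $f_\eta$ is analytic but not identically $\theta^2$), and compare power-series coefficients in $\sqrt t$ after termwise integration, obtaining a contradiction from the first nonzero higher coefficient. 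Your approach is cleaner in that it avoids the case distinction at $t_0$ (in particular it handles the degenerate case $G\equiv\mathrm{const}$, where $x_0=1$ and the paper's transition-point argument needs a separate remark), and it makes the role of the key fact $f_\eta\not\equiv\theta^2$ completely explicit. One cosmetic point: your remainder $O(u^{2k+1/2})$ for $f_\eta^{-1}$ is too optimistic in general (it should be $O(u^{k+3/2})$), but this is harmless since, as you correctly note in your last paragraph, the decisive step is the real-analyticity of both sides in $\sqrt t$ and the comparison of the coefficient of $t^{k+1/2}$.
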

\begin{proof}
For every fixed $\eta$, it is easy to check that both the methods satisfy the hypothesis of Theorem \ref{thm:necessary_cond_for_uniformity}, in particular $\tilde{\omega}_\eta \in C^1([0,1])$. By the regularity of the functions $f_\eta$, it holds that  $\tilde{\omega}_\eta$ is twice differentiable almost everywhere and $\tilde{\omega}_\eta^{-1}=\phi_\eta$, with $\phi_\eta$ as in \eqref{eq:rearrangment2}. At the point $x_0= \phi_\eta\left(t_0\right)$, with
$$
t_0=(b-a)^{-2}\min_{[a,b]}\left\{\frac{p(\tau(y))}{w(\tau(y))(\tau'(y))^2}\right\}\sup_{[0,\pi]}\left(f_\eta(\theta)\right),
$$
there exists $\epsilon>0$ such that
$$
\tilde{\omega}_\eta''(x_0) \neq  \tilde{\omega}_\eta''(x) \qquad \mbox{for every } x \in (x_0, x_0+\epsilon).
$$
It can be proved by direct computation, using the same approach as in Corollary \ref{cor:item_c}, we skip the details.

Therefore, even if $\tilde{\omega}_\eta(x) = \frac{x^2\pi^2}{B^2}$ for every $x \in [0,x_0]$, there exists a non-negligible compact set $I\subseteq [x_0,1]$ such that $\tilde{\omega}_\eta(x) \neq \frac{x^2\pi^2}{B^2}$.
\end{proof}

On the contrary, if the order of approximation is let free to increase, then the maximum of the relative error seems to decrease to zero, provided that the discretization is made on an appropriate non-uniform grid. See subsections \ref{ssec:FD_nonuniform} and \ref{ssec:galerkin}.

\subsection{Further generalizations}\label{ssec:generalization}
As we stated at the beginning of the Introduction, the results of this section can be generalized to different kind of differential operators in any dimension. In particular, Theorem \ref{thm:necessary_cond_for_uniformity} basically relies just on Theorem \ref{thm:discrete_Weyl_law} and the Weyl's law for Sturm-Liouville operators. Therefore, if we know the asymptotic distribution of the discrete spectrum of the operator $\mathcal{L}$, we can measure the lower bound of the maximum relative approximation error by means of the spectral symbol which characterizes the matrix method used for the discretization. 

As a plain example, consider the following Dirichlet boundary value problem on the unit square,
\begin{equation}\label{eq:Dirichlet_2dim}
\begin{cases}
-\Delta u(x) = \lambda u(x) & \mbox{for } x \in (0,1)^2 \subset \R^2,\\
u(x)=0 & \mbox{for } x \in \partial (0,1)^2,
\end{cases}
\end{equation}
The eigenvalues are given by $\lambda\left(-\Delta_{\textnormal{dir}}\right) = \pi^2\left(i^2+j^2\right)$ for $i,j\in \N$. If we indicate with $\tilde{\Omega}(x)$ the inverse cumulative distribution function for the eigenvalue distribution of $\left\{n^{-2}\lambda_k\left(-\Delta_{\textnormal{dir}}\right)\, : \, k=1,\ldots,n^2 \right\}_n$, even if there is not a closed formula as in the one dimensional case for all $x\in[0,1]$, clearly we have that $\tilde{\Omega}(1)=2\pi^2$.

On the other hand, discretizing problem \eqref{eq:Dirichlet_2dim} by means of $3$-points FD as described in \cite[Chapter 7.3]{GS18}, we get 
\begin{equation*}
\left\{ (n+1)^{-2}\lambda_n\left( -\Delta_{\textnormal{dir}}^{(n^2)}\right) \right\} \sim_{\lambda} \omega(\theta_1,\theta_2)= 4 - 2\cos(\theta_1) - 2\cos(\theta_2), \qquad (\theta_1,\theta_2) \in [0,\pi]^2.
\end{equation*}
Even in the discrete case there is not a closed formula for $\tilde{\omega}$, but nevertheless we have that
\begin{align*}
\lim_{n \to \infty}\left\{ \max_{k=1,\ldots,n} \left| \frac{\lambda_k\left(-\Delta_{\Omega,\textnormal{dir}}^{(n)}\right)}{\lambda_k\left(-\Delta_{\Omega,\textnormal{dir}}\right)}  -1\right|  \right\}&= \max_{x\in[0,1]} \left| \frac{\tilde{\omega}(x)}{\tilde{\Omega}(x)}  -1\right|\\
&\geq \left| \frac{\tilde{\omega}(1)}{\tilde{\Omega}(1)}  -1\right|\\
&= \left| \frac{\max_{[0,\pi]^2}\omega(\theta_1,\theta_2)}{\tilde{\Omega}(1)}  -1\right|\\
&= 1- \frac{8}{2\pi^2}.
\end{align*}

\section{Conclusions}\label{sec:conlcusions}
Although in the present paper for simplicity all the examples and the theory were developed mostly among the setting of regular Sturm-Liouville problems, a generalization to a wider class of differential operators in dimension $d\geq 1$ is feasible by means of the techniques presented in this paper. 

Given a differential operator $\mathcal{L}$ discretized by means of a numerical scheme, the sampling of the spectral symbol calculated by the theory of GLT sequences does not provide in general an accurate approximation of the eigenvalues $\lambda_k\left(\mathcal{L}\right)$. 
Nevertheless, by Theorem \ref{thm:necessary_cond_for_uniformity}  the knowledge of the spectral symbol provides to a numerical discretization scheme a necessary condition for the uniform spectral approximation of $\mathcal{L}$, in the sense of the relative error. It can measure how far the discretization method is from a uniform approximation of all the modes of the differential operator and this can be useful for engineering applications, see for example \cite{HER}. Moreover, the condition seems to become sufficient if the discretization method is paired with a suitable (non-uniform) grid and an increasing refinement of the order of approximation of the method. In light of this, it becomes a priority to devise new specific discretization schemes with
mesh-dependent order of approximation which guarantee a good balance between convergence to zero of the relative spectral error and computational costs. 

Finally, in reference with Theorem \ref{thm:discrete_Weyl_law}, corollaries \ref{cor:FD_uniform}, \ref{cor:Galerkin_uniform}, and \cite[Remark 15]{KLSS18}, since the spectral symbol is deeply related to the Weyl's asymptotic distribution of the eigenvalues of the differential operator, this connection can be exploited to give better estimates of the Weyl function of generic elliptic operators on manifolds with bounded geometry, through a smart discretization of the elliptic operator itself and the analysis of the associated spectral symbol generated by the discretization scheme.


\appendix

\section{Proofs of Subsection \ref{ssec:FD}}\label{ssec:proof_FD}
\textbf{Theorem \ref{thm:FD_symbol}}
\begin{proof}
	The proof of item \eqref{item_spectral_conv_thm:FD_symbol} is long and technical, and we avoid to present it here. Let us just mention that it can be proved by a straightforward generalization of standard techniques, see \cite[Theorem 1]{C69} and \cite{Gary65,CFL}. About item \eqref{item_spectral_symbol_thm:FD_symbol}, let us preliminarily observe that in case of $p(x)\equiv  1$ and $\tau(x)=x$, then $\prescript{a}{b}{\mathcal{L}^{(n,\eta)}_{\textnormal{dir},p(x)\equiv 1}}$ is a symmetric Toeplitz matrix defined by
	$$
	\left(\prescript{a}{b}{\mathcal{L}^{(n,\eta)}_{\textnormal{dir},p(x)\equiv 1}}\right)_{i,j} = \frac{(n+1)^2}{(b-a)^2}d_{\eta,|i-j|},
	$$
	where $d_{\eta,k}$ are the coefficients defined in \eqref{FD_coefficients}, see \cite[Corollary 2.2]{Li05} and \cite[Equation (27)]{KO99}. Therefore,
	\begin{equation*}
	\left\{(n+1)^{-2}\prescript{a}{b}{\mathcal{L}^{(n,\eta)}_{\textnormal{dir},p(x)\equiv 1}}\right\}_n \sim_{\lambda} \frac{f_\eta (\theta)}{(b-a)^2}, \qquad \theta \in [0,\pi].
	\end{equation*}
	
	Let us now define an approximation of $\prescript{a}{b}{\mathcal{L}^{(n,\eta)}_{\textnormal{dir},p(\bar{x})}}$, namely
	\begin{equation*}
	\prescript{a}{b}{\mathfrak{L}^{(n,\eta)}_{\textnormal{dir},p(\bar{x})}} := \begin{bmatrix}
	\frac{p(\tau(x_1))}{\tau'(x_1)^2} & 0 & &  0\\
	0 & \frac{p(\tau(x_2))}{\tau'(x_2)^2} & 0 & \\
	& \ddots & \ddots & \ddots  \\
	0& & 0 & \frac{p(\tau(x_n))}{\tau'(x_n)^2}
	\end{bmatrix}\prescript{a}{b}{\mathcal{L}^{(n,\eta)}_{\textnormal{dir},p(x)\equiv 1}}.
	\end{equation*}
	$\prescript{a}{b}{\mathfrak{L}^{(n,\eta)}_{\textnormal{dir},p(\bar{x})}}$ is symmetric and $\left\|(n+1)^{-2}\prescript{a}{b}{\mathfrak{L}^{(n,\eta)}_{\textnormal{dir},p(\bar{x})}}\right\|\leq \max_{[a,b]}\left\{ p(\tau(x))/\tau'(x)^2\right\}\max_{[0,\pi]}\left\{|f_\eta (\theta)|\right\}$. Moreover, by \cite[\textbf{GLT 3-4} p. 160]{GS17} and Proposition \ref{prop:GLT1}, it holds that
	\begin{equation}\label{eq:thmFD_1}
	\left\{(n+1)^{-2}\prescript{a}{b}{\mathfrak{L}^{(n,\eta)}_{\textnormal{dir},p(\bar{x})}} \right\}_n \sim_\lambda \frac{p(\tau(x))}{(b-a)^2\tau'(x)^2}f_\eta(\theta), \qquad (x,\theta)\in [a,b]\times[0,\pi]. 
	\end{equation}
	
	By the regularity of $p(x)$ and $\tau(x)$, it is not difficult to prove that
	\begin{equation}\label{eq:thmFD_2}
	(n+1)^{-2}\left(\prescript{a}{b}{\mathcal{L}^{(n,\eta)}_{\textnormal{dir},p(\bar{x})}} - \prescript{a}{b}{\mathfrak{L}^{(n,\eta)}_{\textnormal{dir},p(\bar{x})}}\right) = Y^{(n)},
	\end{equation} 
	such that 
	$$
	\|Y^{(n)}\|\leq c, \qquad n^{-1} \|Y^{(n)}\|_1 \to 0.
	$$
	Combining now \eqref{eq:thmFD_1} and \eqref{eq:thmFD_2}, by  \cite[Property \textbf{S 4} p. 156]{GS17} we get that 
	$$
	\left\{(n+1)^{-2}\prescript{a}{b}{\mathcal{L}^{(n,\eta)}_{\textnormal{dir},p(\bar{x})}} \right\}_n \sim_\lambda \frac{p(\tau(x))}{(b-a)^2\tau'(x)^2}f_\eta(\theta), \qquad (x,\theta)\in [a,b]\times[0,\pi].
	$$
	Finally, it is immediate to check that $\left\|(n+1)^{-2} Q^{(n)}\right\|\leq cn^{-2}$ and again, by \cite[properties \textbf{GLT 3-4} p. 160]{GS17} and \cite[Property \textbf{S 4} p. 156]{GS17} we conclude that
	$$
	\left\{(n+1)^{-2}\prescript{a}{b}{\mathcal{L}^{(n,\eta)}_{\textnormal{dir},p(\bar{x}),q(\bar{x}),w(\bar{x})}} \right\}_n = \left\{ \left(W^{(n)}\right)^{-1}\left(\frac{\prescript{a}{b}{\mathcal{L}^{(n,\eta)}_{\textnormal{dir},p(\bar{x})}}}{(n+1)^2} + \frac{Q^{(n)}}{(n+1)^2}\right)\right\}_n \sim_{\lambda} \frac{p(\tau(x))}{(b-a)^2w(\tau(x))\tau'(x)^2}f_\eta(\theta).
	$$
\end{proof}

\noindent \textbf{Corollary \ref{cor:FD_uniform}}.
\begin{proof}
	$f_\eta(\theta)$ is obviously $C^\infty([0,\pi])$. Let us begin to prove that $f_\eta(\theta)\sim \theta^2$ as $\theta \to 0$ and that it is monotone nonnegative on $[0,\pi]$. By the Taylor expansion at $\theta=0$ we get
	\begin{align*}
	f_\eta(\theta) &= d_{\eta,0} + 2 \sum_{k=1}^\eta d_{\eta,k}\cos(k\theta)\\
	&= d_{\eta,0} + \sum_{\substack{k=-\eta\\k\neq 0}}^\eta d_{\eta,k}\cos(k\theta)\\
	&= d_{\eta,0} + \sum_{\substack{k=-\eta\\k\neq 0}}^\eta d_{\eta,k}\left(1 - \frac{(k\theta)^2}{2}\right) + o(\theta^4)\\
	&=  -\theta^2\sum_{\substack{k=-\eta\\k\neq 0}}^\eta  (-1)^{k} \frac{\eta!\eta!}{(\eta-k)!(\eta+k)!} + o(\theta^4)\\
	&= -\theta^2 \left[(-1)^\eta\frac{\eta!\eta!}{(2\eta)!}\sum_{\substack{m=0\\m\neq \eta}}^{2\eta} (-1)^{m}\binom{2\eta}{m}\right] + o(\theta^4)\\
	&= \theta^2 + o(\theta^4).
	\end{align*} 
	Moreover, let us observe that
	\begin{align*}
	&f_\eta(\theta)= d_{\eta,0}+2\sum_{k=1}^\eta d_{\eta,k}\cos(k\theta) = d_{\eta,0}+ 2 \sum_{k=1}^\eta \left|d_{\eta,k}\right|\cos(k(-\theta+\pi)),\\
	&f_\eta'(\theta)=  2 \sum_{k=1}^\eta k\left |d_{\eta,k}\right|\sin(k(-\theta+\pi)).
	\end{align*}
	Define then 
	$$
	g (\psi) = \sum_{k=1}^\eta a_{k} \sin(k\psi), \qquad \mbox{with }\psi=-\theta+\pi \in [0,\pi], \qquad \mbox{and } a_{k} = 2k\left|d_{\eta,k}\right|.
	$$
	It is immediate to check that $a_1 \geq a_2 \geq \ldots \geq a_n > 0$ and that
	$$
	(2k)a_{2k} \leq (2k-1)a_{2k-1} \quad \forall k \geq 1.
	$$	
	By \cite[Theorem 1]{AS74} we can conclude that $g(\psi)>0$ on $(0,\pi)$ and then $f_\eta'(\theta)>0$ on $(0,\pi)$. Since $f_\eta(0)=0$, we deduce that $f_\eta(\theta)\geq 0$ on $[0,\pi]$. 
	
	The second part of the thesis is an immediate consequence of identities \eqref{FD_coefficients}. Indeed, for every fixed $k\geq 1$ it holds that
	\begin{equation*}
	\sum_{k=1}^\eta \left|d_{\eta,k} \right|\leq \sum_{k=1}^\eta\frac{2}{k^2}\leq \frac{\pi^2}{3} \qquad \forall \eta,k\geq 1,
	\end{equation*}
	and
	\begin{equation*}
	\frac{2}{k^2}-\left| d_{\eta,k}\right|   = o(1) \quad \mbox{as }\eta\to \infty, \qquad  \sum_{k=1}^{\eta} (-1)^k\left(\frac{2}{k^2} - \left|d_{\eta,k} \right|\right)\to 0 \quad \mbox{as }\eta\to \infty.
	\end{equation*}
	Since $\sum_{k=1}^{\infty} \frac{(-1)^{k}}{k^2} = -\frac{\pi^2}{12}$, we conclude that
	$$
	-\lim_{\eta\to \infty} 2\sum_{k=1}^\eta d_{\eta,k} = -2\sum_{k=1}^\infty (-1)^{k}\frac{2}{k^2}= \frac{\pi^2}{3}.
	$$
	Therefore, for every fixed $\theta \in [0,\pi]$,
	\begin{equation*}
	\lim_{\eta\to \infty} f_\eta(\theta) = \frac{\pi^2}{3} + 4\sum_{k=1}^\infty \frac{(-1)^k}{k^2}\cos(k\theta)= \theta^2,
	\end{equation*}
	being $\left\{\pi^2/3\right\}\cup \left\{(-1)^k4/k^2\right\}_{k\geq1}$ the Fourier coefficients of $\theta^2$ on $[0,\pi]$, and the convergence is uniform.
\end{proof}

\end{document}